\documentclass[11pt]{article}
\usepackage[margin=1in]{geometry}
\usepackage[utf8]{inputenc}
\usepackage[T1]{fontenc}
\usepackage[english]{babel}
\usepackage{lmodern}
\usepackage{graphicx}
\usepackage{wrapfig}
\usepackage[subrefformat=parens,labelformat=parens]{subfig}
\usepackage{caption}
\usepackage{paralist}
\usepackage{enumitem}  
\usepackage{float}
\usepackage[linesnumbered,ruled,vlined]{algorithm2e}

\graphicspath{{./figures/}}


\makeatletter
\renewcommand{\p@enumii}{}
\makeatother

\newcommand{\titel}{On minors of non-hamiltonian graphs}

\usepackage[usenames]{color}
\definecolor{hellblau}{rgb}{0.2,0.4,1} 
\definecolor{dunkelblau}{rgb}{0,0,0.8}
\definecolor{dunkelgruen}{rgb}{0,0.5,0}

\usepackage[
pdftex,
colorlinks,
linkcolor=dunkelblau,
urlcolor=dunkelblau,
citecolor=dunkelgruen,
bookmarks=true,
linktocpage=true,
pdfauthor={On-Hei Solomon Lo},
pdfsubject={}
]{hyperref}
\urlstyle{same}

\usepackage{pdfpages}
\usepackage{amsmath}
\usepackage{amsthm}
\allowdisplaybreaks
\usepackage{amsfonts}

\theoremstyle{plain}
\newtheorem{satz}{Satz}  
\newtheorem{theorem}[satz]{Theorem}
\newtheorem{lemma}[satz]{Lemma}

\newtheorem{proposition}[satz]{Proposition}

\newtheorem{claim}[satz]{Claim}

\theoremstyle{remark}

\theoremstyle{definition}

\newtheorem{conjecture}[satz]{Conjecture}
\newtheorem{question}[satz]{Question}

\numberwithin{satz}{section}

\begin{document}
	\title{\titel}
	\author{
		On-Hei Solomon Lo\thanks {School of Mathematical Sciences, Tongji University, Shanghai 200092, China}
		}
		
	\date{}
	
	\maketitle

\begin{abstract}
	A theorem of Tutte states that every 4-connected non-hamiltonian graph contains \( K_{3,3} \) as a minor. We strengthen this result by proving that such a graph must contain \( K_{3,4} \) as a minor, thereby confirming a special case of a conjecture posed by Chen, Yu, and Zang in a strong form.
	
	This result may be viewed as a step toward characterizing the minor-minimal 4-connected non-hamiltonian graphs. As a 3-connected analog, Ding and Marshall conjectured that every 3-connected non-hamiltonian graph has a minor of \( K_{3,4} \), \( \mathfrak{Q}^+ \), or the Herschel graph, where \( \mathfrak{Q}^+ \) is obtained from the cube by adding a new vertex adjacent to three independent vertices. We confirm this conjecture.
\end{abstract}

\sloppy

\section{Introduction}

In 1956, Tutte~\cite{Tutte1956} proved that every 4-connected planar graph is hamiltonian. This groundbreaking theorem has inspired a wealth of subsequent research. Notably, Thomas and Yu~\cite{Thomas1994} extended it to projective-planar graphs. However, a longstanding conjecture, proposed independently by Gr\"{u}nbaum~\cite{Gruenbaum1970} and Nash-Williams~\cite{Nash-Williams1973}, remains largely unresolved: Every 4-connected toroidal graph is hamiltonian. For further developments on extensions to graphs embedded in surfaces, we refer the reader to~\cite{Ozeki2021}. In this paper, we consider Tutte’s theorem from a different perspective.

A classical theorem due to Hall~\cite{Hall1943} and Wagner~\cite{Wagner1937} establishes that every 3-connected non-planar graph is either isomorphic to \( K_5 \) or contains a \( K_{3,3} \) minor. Consequently, Tutte’s theorem is equivalent to asserting that every 4-connected non-hamiltonian graph contains a \( K_{3,3} \) minor. Another well-known result of Wagner~\cite{Wagner1936} shows that every 4-connected non-planar graph contains a \( K_5 \) minor. Together with Tutte’s theorem, this implies that every 4-connected non-hamiltonian graph also contains a \( K_5 \) minor. These results motivate our investigation of minors in non-hamiltonian graphs.

Let $\mathcal{G}_k$ be the class of $k$-connected non-hamiltonian graphs. A \emph{minor-minimal $k$-connected non-hamiltonian graph} is a graph in $\mathcal{G}_k$ that does not contain any other graph from $\mathcal{G}_k$ as a minor. Characterizing minor-minimal 4-connected non-hamiltonian graphs enhances the aforementioned results, as it allows one to verify that every such graph contains both a $K_{3,3}$ minor and a $K_5$ minor. In pursuit of this, we answer the following question posed by Ding and Marshall~\cite{Ding2018} affirmatively, thereby improving the theorem of Tutte.

\begin{question}[Ding and Marshall~\cite{Ding2018}] \label{que:DM}  
	Does every $4$-connected non-hamiltonian graph contain a minor of $K_{3,4}$?  
\end{question}  

It is worth noting that the connectivity requirement in Tutte’s theorem cannot be relaxed: there exist 3-connected non-hamiltonian graphs that do not contain a \( K_{3,3} \) minor. Despite this, Chen and Yu~\cite{Chen2002} proved that every 3-connected graph \( G \) with no \( K_{3,3} \) minor contains a cycle of length at least \( \alpha \cdot |V(G)|^{\log_3 2} \) for some constant \( \alpha > 0 \). Moreover, the exponent \( \log_3 2 \) is known to be optimal. This result was later extended by Chen, Yu, and Zang~\cite{Chen2012}, who showed that for every integer \( t > 3 \), any 3-connected graph \( G \) with no \( K_{3,t} \) minor contains a cycle of length at least \( \alpha(t) \cdot |V(G)|^\beta \), where $\alpha(t) = (1/2)^{t(t-1)}$ and $\beta = \log_{1729} 2$. They further conjectured that a linear bound should hold in the 4-connected case. 

\begin{conjecture}[Chen, Yu, and Zang~\cite{Chen2012}]
	For each integer \( t \ge 4 \), there exists a function \( \alpha(t) > 0 \) such that every 4-connected graph $G$ with no \( K_{3,t} \) minor contains a cycle of length at least \( \alpha(t) \cdot |V(G)| \).
\end{conjecture}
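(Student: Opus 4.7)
The plan is to prove the conjecture by induction on $t$, with the base case $t=4$ supplied by the main theorem of this paper in its strong form: every 4-connected graph with no $K_{3,4}$ minor is hamiltonian, which yields $\alpha(4)=1$. For the inductive step I fix $t\ge 5$, assume the statement holds for $t-1$ with some constant $\alpha(t-1)>0$, and let $G$ be a 4-connected graph on $n$ vertices with no $K_{3,t}$ minor. If $G$ additionally has no $K_{3,t-1}$ minor, the inductive hypothesis immediately yields a cycle of length $\alpha(t-1)\,n$. So I may assume $G$ contains a $K_{3,t-1}$ minor $M$, chosen to be extremal, say with the union $U$ of the branch sets as large as possible and, subject to this, with the number of edges internal to the branch sets maximized. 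Write $A_1,A_2,A_3$ and $B_1,\ldots,B_{t-1}$ for the branch sets of $M$.

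The first substantive step is a structural lemma teased out of the extremality of $M$ together with the forbidden $K_{3,t}$: any connected subgraph $H$ of $G-U$ attaches in a very restricted way, because three internally disjoint attachments to $A_1,A_2,A_3$ avoiding the $B_j$'s would produce a $K_{3,t}$ minor, and other attachment patterns would allow $H$, or part of it, to be absorbed into the branch sets, contradicting extremality. Exploiting this control, I would build a minor $G'$ of $G$ by contracting each branch set of $M$ (together with the ``ribs'' from the structural lemma) to a single vertex. The goal is to arrange that $G'$ is still (essentially) 4-connected and remains $K_{3,t-1}$-minor-free, so that induction supplies a cycle of length $\alpha(t-1)\,|V(G')|$ in $G'$. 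In parallel, 4-connectivity of $G$ combined with a Menger/linkage argument through the branch sets of $M$ produces a path that traverses most of the $B_j$'s; stitching this path to the lifted inductive cycle via $A_1,A_2,A_3$ yields a single cycle in $G$ of length $\Omega_t(n)$, as required.

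The main obstacle is carrying out the contraction so that both 4-connectivity and $K_{3,t-1}$-minor-freeness survive: identifying distinct branch sets can create $K_{3,t-1}$ minors that were absent in $G$, and it can easily introduce 3-cuts in the quotient. Controlling this is essentially the same 3-cut/4-cut analysis developed in the present paper for the $t=4$ case, now to be carried out uniformly in $t$, and it is the technical heart of the argument. A secondary obstacle is that lifting a cycle of $G'$ through a large branch set of $M$ must be done without losing more than a constant fraction of its vertices, which should follow from the branch set being an internally 3-connected subgraph of $G$ together with 4-connectivity at its boundary. Assuming these obstacles are overcome, the induction yields a recursion of the shape $\alpha(t)\ge c\,\alpha(t-1)/t$ for an absolute constant $c>0$, and hence a positive (though rapidly decreasing) function $\alpha(t)$, which suffices for the conjecture.
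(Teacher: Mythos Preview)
The paper does not prove this statement: it is recorded as an open conjecture of Chen, Yu, and Zang, and the paper confirms only the case $t=4$ (with $\alpha(4)=1$) as a corollary of its main theorem that every 4-connected graph with no $K_{3,4}$ minor is hamiltonian. There is no proof of the general conjecture in the paper to compare against.

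Your inductive scheme has a genuine gap at its core. You propose to contract each branch set of a maximal $K_{3,t-1}$ minor $M$ to a single vertex and then apply the inductive hypothesis to the resulting graph $G'$, which you want to be $K_{3,t-1}$-minor-free. But contracting the branch sets of $M$ produces a $K_{3,t-1}$ \emph{subgraph} in $G'$, so $G'$ trivially contains $K_{3,t-1}$ as a minor; the inductive hypothesis is therefore inapplicable. There is no evident way to repair this: any quotient that collapses $M$ inherits $K_{3,t-1}$, and any quotient that does not collapse $M$ gives no control on the size of $G'$.

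The secondary claims are also unsupported. Nothing in the paper's $t=4$ analysis is ``uniform in $t$'': it proceeds via Archdeacon's list of minor-minimal non-projective-planar graphs and a case analysis of spanning subdivisions of the specific graphs $D_{17}$, $E_{20}$, $E_{22}$, $F_4$, none of which generalizes. Preservation of 4-connectivity under your contraction, and the cycle-lifting step with only constant-factor loss, are asserted but not argued, and both are known to be delicate. As written, the proposal does not constitute a proof for $t\ge 5$.
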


Our solution to Question~\ref{que:DM} confirms Chen, Yu, and Zang's conjecture for the case \( t = 4 \), with \( \alpha(4) = 1 \).

The task of characterizing minor-minimal 4-connected non-hamiltonian graphs appears to be highly challenging, in stark contrast to the trivial case of the 2-connected analog. As noted in~\cite{Ding2018}, \( K_{2,3} \) is the unique minor-minimal 2-connected non-hamiltonian graph. 

Ding and Marshall~\cite{Ding2018} propose the following conjecture for characterizing the minor-minimal 3-connected non-hamiltonian graphs. The graphs \( K_{3,4} \), \( \mathfrak{Q}^+ \), and the Herschel graph \( \mathfrak{H} \) are depicted in Figure~\ref{fig:minimal}. It is straightforward to verify that these graphs are non-hamiltonian, as each is bipartite with an odd number of vertices.

\begin{figure}[!ht]
	\centering
	\begin{tabular}{ccc}
		\subfloat[The graph $K_{3, 4}$.\label{subfig:K34}]{
			\includegraphics[scale=1]{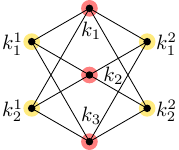}
		} & 
		\subfloat[The graph $\mathfrak{Q}^+$.\label{subfig:Q+}]{
			\includegraphics[scale=1]{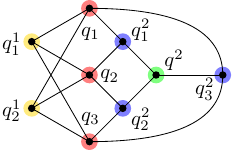}
		} & 
		\subfloat[The Herschel graph $\mathfrak{H}$.\label{subfig:Herschel}]{
			\includegraphics[scale=1]{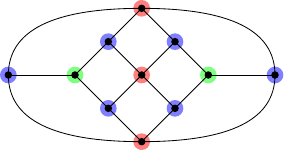}
		} 
	\end{tabular}
	\caption{Three 3-connected non-hamiltonian graphs.}
	\label{fig:minimal}
\end{figure}

\begin{conjecture}[Ding and Marshall~\cite{Ding2018}] \label{con:DM}
	Every $3$-connected non-hamiltonian graph contains a minor of $K_{3, 4}$, $\mathfrak{Q}^+$, or $\mathfrak{H}$.
\end{conjecture}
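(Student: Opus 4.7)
The plan is to proceed by induction on $|V(G)|$, with the paper's main theorem (the affirmative answer to Question~\ref{que:DM}) serving as the engine for the 4-connected case. Let $G$ be a 3-connected non-hamiltonian graph. If $G$ is 4-connected, the main theorem immediately produces a $K_{3,4}$ minor, so we may assume $G$ admits a 3-cut $S = \{x, y, z\}$ and a 3-separation $(A, B)$ with $A \cap B = S$ and with $A \setminus B$ and $B \setminus A$ both nonempty. Form the torsos $G_A$ and $G_B$ by adjoining any missing edges of the triangle on $S$ to $G[A]$ and $G[B]$; each torso is either 3-connected or reduces to a small degenerate piece that can be examined directly.

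The decomposition principle I would rely on is the standard one for 3-cuts: $G$ is hamiltonian if and only if there exist \emph{compatible} hamiltonian $S$-paths in $G_A$ and $G_B$, where a hamiltonian $S$-path is a hamiltonian path with both endpoints in $S$. Since $G$ is non-hamiltonian, the pair $(G_A, G_B)$ fails every such compatibility. Applying induction to whichever torso obstructs the compatibility---note that both torsos are strictly smaller than $G$ once $A \setminus B$ and $B \setminus A$ are nonempty---I would obtain a minor from $\{K_{3,4}, \mathfrak{Q}^+, \mathfrak{H}\}$ in that torso and then lift it to $G$ by routing each virtual edge in $S$ through a path in the opposite piece; the 3-connectivity of $G$ guarantees the requisite disjoint $S$-paths through $B$ (respectively $A$).

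The main obstacle is that non-hamiltonicity does not cleanly localize to a single torso: $G$ being non-hamiltonian only yields a disjunction over $(G_A, G_B)$, not the stronger statement that a specific torso is non-hamiltonian. To close the induction I would strengthen the conjecture to a \emph{rooted} form, quantifying over 3-connected graphs $H$ equipped with a distinguished triple $T \subseteq V(H)$ such that no family of hamiltonian $T$-paths with the required endpoint configurations exists, and insisting that the resulting minor's branch sets interact suitably with $T$. Pinning down the correct rooted statement---in particular capturing the bipartite, odd-order parity obstruction shared by $K_{3,4}$, $\mathfrak{Q}^+$, and $\mathfrak{H}$---is the core technical burden, and it is what determines which of the three minors appears in each branch of the case analysis. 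A short direct inspection would then dispose of the base cases and the degenerate torsos, and the lifting of $\mathfrak{Q}^+$ and $\mathfrak{H}$ minors across virtual triangles (which is more delicate than lifting $K_{3,4}$) would be handled using the rooted information preserved by the stronger inductive hypothesis.
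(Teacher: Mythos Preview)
Your plan has a genuine gap at its core. The reduction you sketch---decompose along a 3-cut, pass to torsos, induct---is precisely what Ding and Marshall already carried out to obtain Lemma~\ref{lem:i4c}: any minor-minimal counterexample is \emph{internally} 4-connected. So the nontrivial content of the conjecture lives entirely in the internally 4-connected case, and your proposal does not touch it. You invoke the answer to Question~\ref{que:DM} for 4-connected graphs, but ``internally 4-connected'' is strictly weaker: such a graph may still have 3-cuts, each of which is an independent set cutting off a single vertex. For those cuts your torso decomposition is vacuous---one torso is $K_4$, the other is $G$ with a triangle glued on---so no genuine size reduction occurs, and Question~\ref{que:DM} never fires.

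Your proposed fix, a ``rooted'' strengthening recording which hamiltonian $S$-paths are available, is where the real difficulty would have to be absorbed, but as stated it is a placeholder rather than a plan: you do not say what the rooted statement is, why it holds in the base cases, or how the three target minors arise from the various failure patterns. There is also a smaller issue with lifting: your target minors $K_{3,4}$, $\mathfrak{Q}^+$, $\mathfrak{H}$ are bipartite and triangle-free, while the torsos contain the virtual triangle on $S$; a minor found in a torso may use those virtual edges, and replacing them by paths through the other side can fail when the other side is a single vertex.

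For contrast, the paper's proof never attempts to induct through 3-separations beyond invoking Lemma~\ref{lem:i4c}. Instead it exploits topology: the planar case is already done, the projective-planar case is handled in Section~\ref{sec:projective-planar} via a Wagner-graph minor and a direct construction of a Hamilton cycle, and the non-projective-planar case uses the Archdeacon obstruction list to force a minor of $D_{17}$, $E_{20}$, or $F_4$, after which Section~\ref{sec:DEEF} shows a spanning $F_4$-subdivision exists and Section~\ref{sec:DMC} builds a Hamilton cycle from it. The 4-connected theorem (Question~\ref{que:DM}) is proved separately and is not used in the proof of Conjecture~\ref{con:DM}.
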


It was shown in~\cite{Lo2024} that Conjecture~\ref{con:DM} holds for planar graphs. In this paper, we settle the conjecture.  

It remains an open question how to characterize minor-minimal 4-connected non-hamiltonian graphs. In view of the 2- and 3-connected cases, it is tempting to conjecture that every minor-minimal 4-connected non-hamiltonian graph $G$ has an odd number of vertices and is bipartite, with one part consisting of $(|V(G)| + 1)/2$ vertices of degree 4.

To address Conjecture~\ref{con:DM}, we restrict our attention to the non-planar case, as the planar instance has already been resolved in~\cite{Lo2024}. We begin with the projective-planar setting. By applying a beautiful result of Robertson~\cite{Maharry2016}, we find that any minor-minimal counterexample \( G \) must contain a Wagner graph minor. It is well known that, up to symmetry, such a minor admits only two distinct embeddings in the projective plane, which offers a helpful starting point for our analysis.

For the non-projective-planar case, we rely on a refinement of Archdeacon’s theorem~\cite{Archdeacon1981}, established by Robertson, Seymour, and Thomas~\cite{Ding2014}. In our setting, this result implies that \( G \) must contain a minor of one of three specific graphs from the Archdeacon list. Building on this, we apply a powerful tool developed by Johnson and Thomas~\cite{Johnson2002} concerning minors of internally 4-connected graphs. We then proceed through a sequence of structural arguments to locate a particular spanning subdivision in \( G \), from which we are able to construct a Hamilton cycle, leading to a contradiction. A similar strategy is employed in the proof of Question~\ref{que:DM}, where, by virtue of the theorem of Thomas and Yu~\cite{Thomas1994}, we may assume from the outset that any counterexample is non-projective-planar.

Our structural results on graphs excluding \( K_{3,4} \) minors may be of independent interest. In particular, we show that every 4-connected non-projective-planar graph with more than eight vertices and no \( K_{3,4} \) minor contains a spanning subdivision of one of two specific graphs; see Sections~\ref{sec:DEEF} and~\ref{sec:DMQ} for details. Note that all planar graphs have no \( K_{3,4} \) minor, and the class of projective-planar graphs without a \( K_{3,4} \) minor has been well characterized~\cite{Maharry2012, Maharry2017a}. A natural direction for future research is to seek a full characterization of all graphs that exclude \( K_{3,4} \) as a minor.

The paper is organized as follows. In Section~\ref{sec:pre}, we review basic definitions and several key tools. Section~\ref{sec:projective-planar} establishes the projective-planar case of Conjecture~\ref{con:DM}. We then focus on the non-projective-planar case, where the core structural analysis of graphs excluding certain minors is presented in Section~\ref{sec:DEEF}. Finally, we complete the proofs of Conjecture~\ref{con:DM} and Question~\ref{que:DM} in Sections~\ref{sec:DMC} and~\ref{sec:DMQ}, respectively.

\section{Preliminaries}\label{sec:pre}

A graph \( H \) is a \emph{minor} of a graph \( G \) if it can be obtained from \( G \) by deleting edges, deleting vertices, and contracting edges. 
It is well known that \( G \) contains \( H \) as a minor if and only if there exists a mapping \( \mu \) that maps \( V(H) \) into pairwise disjoint subsets of \( V(G) \) and maps \( E(H) \) to pairwise internally disjoint paths in \( G \), such that for each \( v \in V(H) \), the induced subgraph \( G[\mu(v)] \) is connected, and for any \( uv \in E(H) \), the path \( \mu(uv) \) joins \( \mu(u) \) and \( \mu(v) \) with no internal vertex of \( \mu(uv) \) in \( \bigcup_{w \in V(H)} \mu(w) \). 

Clearly, if \( v \in V(H) \) has degree at most four, we can assume that \( G[\mu(v)] \) contains a spanning path \( P \), such that for any \( uv \in E(H) \), the intersection of \( \mu(v) \) and \( \mu(uv) \) is an end-vertex of \( P \), and each end-vertex of \( P \) is incident to at least two paths corresponding to edges incident to \( v \).

If \( \mu(v) \) consists of exactly one vertex for every \( v \in V(H) \), we say that \( G \) contains a \emph{subdivision} of \( H \). Note that if every vertex of \( H \) has degree three, then the existence of a minor of \( H \) is equivalent to the existence of a subdivision of \( H \).

A \emph{$k$-cut} in a connected graph $G$ is a subset $S$ of vertices such that $|S| = k$ and $G - S$ is disconnected. We say $G$ is $k$-connected if $|V(G)| > k$ and $G$ has no $k'$-cut for $k' < k$. 
Moreover, a 3-connected graph \( G \) on at least five vertices is \emph{internally $4$-connected} if either $G$ is isomorphic to $K_{3,3}$, or every 3-cut of \( G \) is an independent set that separates \( G \) into a single vertex and a remaining component.

For any path \( P \) and any vertices \( u, v \) in \( P \), we denote the subpath with end-vertices \( u \) and \( v \) by \( P[u, v] \). A vertex of $P$ is \emph{internal} if it is not an end-vertex. We denote by \( P(u, v) \) the (possibly empty) path obtained from \( P[u, v] \) by deleting $u$ and $v$.

Let \( G \) be a graph embedded on a compact surface, and let \( C \) be a contractible cycle in \( G \), which is also a simple closed curve on the surface. Deleting \( C \) from the surface results in two regions. We call the disk region the \emph{interior} and the other region the \emph{exterior} of \( C \). However, when we say that a subgraph $H$ of $G$ lies in the interior (or exterior) of $C$, some vertices of $H$ may still belong to $C$. We can impose an orientation on the vertices of \( C \). For any vertices \( u, v \) on \( C \), denote by \( C[u, v] \) the path in \( C \) from \( u \) to \( v \); and by \( C(u, v) \) the (possibly empty) path obtained from \( C[u, v] \) by deleting the vertices \( u \) and \( v \).

Let $H$ be a subgraph of $G$. Let \( A \) be a component of \( G - V(H) \). The \emph{bridge} \( B \) of $H$ containing \( A \) consists of \( A \), the vertices in \( H \) that are adjacent to \( A \), as well as the edges connecting \( A \) to \( H \). The vertices in \( V(H) \cap V(B) \) are called the \emph{attachments} of \( B \). (Note that in this paper, we do not consider an edge with both end-vertices in \( H \) but not in \( H \) itself as a bridge.)

For any positive integer $k$, denote by $[k]$ the set of positive integers at most $k$.

\subsection{Robertson's characterization of graphs without Wagner minors}

Recall that the \emph{Wagner graph} is defined as the graph obtained from a cycle of length eight by adding four edges, each joining a pair of vertices at distance four in that cycle. The following is the classical characterization of graphs without a minor of the Wagner graph by Robertson.

\begin{theorem}[\cite{Maharry2016}] \label{thm:Wagner}
	Let $G$ be an internally $4$-connected graph without any minor of the Wagner graph. Then one of the following holds:
	\begin{itemize}
		\item $G$ is planar.
		\item $G$ has at most seven vertices.
		\item $G$ is isomorphic to the line graph of $K_{3,3}$.
		\item $G$ contains four vertices such that every edge of $G$ is incident to at least one of these vertices.
		\item $G$ contains two vertices such that $G$ becomes a cycle when these two vertices are removed.
	\end{itemize}
\end{theorem}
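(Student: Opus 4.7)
The plan is to proceed by a minor-extension argument starting from a $K_{3,3}$ minor, with the configurations in which the extension fails pinpointing the exceptional outcomes. First, by the classical Hall--Wagner theorem, every 3-connected graph without a $K_{3,3}$ minor is either $K_5$ or planar, and both of these are covered by the first two outcomes of the theorem (since the Wagner graph itself has a $K_{3,3}$ minor, excluding $K_{3,3}$ certainly excludes the Wagner graph). Hence I may assume that $G$ is internally 4-connected, non-planar, has at least eight vertices, and contains a $K_{3,3}$ minor.

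Next I would fix a $K_{3,3}$ minor model $\mu$ in $G$ that uses as many vertices of $G$ as possible, i.e.\ maximizing $\sum_{v}|\mu(v)|$, and break ties by minimizing the number of bridges of the model. The Wagner graph $V_8$ can be viewed as a $K_{3,3}$ in which each of the six branch vertices has been expanded into a short path and the resulting terminals on the two sides of the bipartition are linked in a M\"obius-ladder pattern. Consequently, whenever some branch set $\mu(v)$ already has sufficient internal structure, or a single bridge attaches to the branch sets in the right cyclic pattern, one can reroute the model to produce a $V_8$ minor. Internal 4-connectivity, combined with Menger's theorem, supplies four internally disjoint paths between any two suitably chosen vertex subsets, and this is the engine that drives the reroutings.

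The remaining work is a case analysis of the configurations in which every such extension attempt fails. One expects three broad failure regimes, each matching one of the exceptional outcomes of the theorem: almost all edges of $G$ concentrate on a fixed set of four vertices (giving the fourth outcome), two ``apex'' vertices whose removal leaves a cycle absorb the whole graph (giving the fifth outcome), or $G$ is the sporadic example $L(K_{3,3})$ (the third outcome). The main obstacle, in my view, is isolating the sporadic case: $L(K_{3,3})$ is 4-regular on nine vertices and excludes $V_8$ by a subtle embedding argument (it triangulates the torus, leaving no room for the two cross-edges of a M\"obius ladder). Ruling out every other dense internally 4-connected graph that survives the first two regimes requires an intricate but finite local check around each branch set of the chosen $K_{3,3}$ model, and I expect this verification to form the heart of the proof.
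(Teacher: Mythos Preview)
The paper does not prove this theorem at all: it is quoted as Robertson's classical characterization, with a citation to \cite{Maharry2016}, and is used as a black box in the proof of Claim~\ref{cla:Wagner}. So there is no ``paper's own proof'' to compare against.

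As for your outline itself, the overall strategy---start from a $K_{3,3}$ minor (guaranteed by Hall--Wagner once the planar and small cases are set aside) and then try to grow it into a $V_8$ minor, with the obstruction patterns yielding the exceptional outcomes---is broadly in the spirit of Robertson's actual argument. However, what you have written is a plan rather than a proof: the substantive content lies precisely in the case analysis you defer (``I expect this verification to form the heart of the proof''), and that analysis is long and delicate. In particular, your description of how the three non-planar exceptional families arise is heuristic; you have not explained, even at sketch level, why a failed $V_8$-extension forces \emph{exactly} one of the listed structures, nor why the four-vertex-cover and double-wheel families exhaust the non-sporadic possibilities. The remark about $L(K_{3,3})$ triangulating the torus is also not the operative reason it excludes $V_8$; one checks this directly, and the sporadic case in Robertson's proof is isolated by the local branching structure of the $K_{3,3}$ model rather than by a surface-embedding argument. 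If you intend to supply an independent proof, you would need to carry out the full reroute-or-classify analysis; otherwise, citing \cite{Maharry2016} as the paper does is the appropriate course.
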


\subsection{Seymour's splitter theorem}

Let \( H \) be a graph and \( v \in V(H) \). Let \( H' \) be a graph obtained from \( H \) by deleting \( v \), adding two adjacent vertices \( v_1 \) and \( v_2 \), and joining each neighbor of \( v \) to exactly one of \( v_1 \) and \( v_2 \), such that each of \( v_1 \) and \( v_2 \) is joined to at least two neighbors of \( v \). Thus, \( v \) necessarily has degree at least 4. We say that \( H' \) is obtained from \( H \) by \emph{splitting} \( v \).

The famous Seymour’s splitter theorem is stated as follows. Recall that a \emph{wheel} is a graph obtained from a cycle by adding a new vertex and joining it to all vertices of the cycle.

\begin{theorem}[\cite{Seymour1980}]\label{thm:Seymour}
	Let \( G \) and \( H \) be $3$-connected graphs. Suppose that \( G \) contains \( H \) as a minor and that if \( H \) is a wheel, then it is the largest wheel minor of \( G \). Then a graph isomorphic to \( G \) can be obtained from \( H \) by repeatedly applying the operations of adding an edge between two non-adjacent vertices and splitting a vertex.
\end{theorem}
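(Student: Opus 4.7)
The plan is to proceed by induction on the quantity $(|V(G)|-|V(H)|) + (|E(G)|-|E(H)|)$, with the base case $G \cong H$ being trivial. For the inductive step, the task reduces to finding a 3-connected graph $G'$ having $H$ as a minor and still satisfying the wheel hypothesis, such that $G$ is obtained from $G'$ by a single splitter operation (adding an edge between non-adjacent vertices, or splitting a vertex). Equivalently, I must exhibit an edge $e \in E(G)$ such that $G \setminus e$ or $G/e$ is 3-connected, contains $H$ as a minor, and (when $H$ is a wheel) still has $H$ as its largest wheel minor.

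The first move is to fix a realization of $H$ as a minor of $G$ via branch sets $\mu(v)$ and branch paths $\mu(uv)$, chosen so as to minimize the total edge count of $G_\mu = \bigcup_{v \in V(H)} G[\mu(v)] \cup \bigcup_{e \in E(H)} \mu(e)$. A short case analysis on the position of an edge $e \in E(G)$ then shows that $H$ survives as a minor of at least one of $G \setminus e$ and $G/e$: if $e$ lies outside $G_\mu$, then $H$ is a minor of $G \setminus e$ via the same $\mu$; if $e$ lies inside a branch set, then $H$ is a minor of $G/e$ by merging the two endpoints into the same branch set; and if $e$ lies on a branch path, then $H$ is a minor of $G/e$ by shortening that path. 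So the only remaining issue is to choose $e$ to preserve 3-connectivity.

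For the 3-connectivity preservation, I would invoke Tutte's theorem that every 3-connected graph on at least five vertices other than a wheel contains an edge whose contraction remains 3-connected, together with the parallel statement for deletion of a non-essential edge. In the easy case, one of these reductions produces the required $G'$ directly. The delicate case is when every candidate reduction either destroys 3-connectivity or, in the wheel case, creates a wheel minor of $G'$ strictly larger than $H$. A careful analysis of the 2-cuts that arise in $G \setminus e$ or $G/e$, together with how these 2-cuts interact with the partition of $V(G)$ induced by $\mu$, is designed to force $G$ itself to be a wheel; and since $H$ is a 3-connected minor of $G$, it would then have to be a smaller wheel, contradicting the largest-wheel hypothesis.

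The main obstacle is the wheel bookkeeping in this hard case: even when a candidate reduction $G'$ is 3-connected and contains $H$, one must verify that $H$ remains the largest wheel minor of $G'$, or else the induction hypothesis cannot be applied to $G'$. This forces the reduction edge to be chosen with some care and is the subtle point that makes the wheel exception appear intrinsically in the statement. The combinatorial bookkeeping required here, rather than any single deep insight, is what makes Seymour's original argument intricate.
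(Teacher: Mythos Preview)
The paper does not prove this theorem. Theorem~\ref{thm:Seymour} is stated as a known result of Seymour, with a citation to \cite{Seymour1980}, and is then used as a black box (specifically, in the proof of Proposition~\ref{pro:D17toE20}). There is no ``paper's own proof'' to compare your proposal against.

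As for the proposal itself: the high-level induction scheme is the right shape, and your observation that for every edge $e$ at least one of $G\setminus e$ and $G/e$ retains $H$ as a minor is correct. But the sketch is not a proof. The entire content of Seymour's theorem lies in the step you call ``the delicate case'': showing that among the edges whose deletion or contraction preserves 3-connectivity, at least one also preserves the $H$-minor (and, in the wheel case, the maximality of $H$). Tutte's wheel theorem by itself gives you a contractible edge, but not one that is compatible with the minor model, and your paragraph waving at ``a careful analysis of the 2-cuts\ldots'' does not supply the argument. Seymour's actual proof requires a substantially more refined inductive invariant and a non-trivial case analysis; what you have written is an outline of where the difficulty sits, not a resolution of it.
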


\subsection{Subdivisions and stable bridges in internally 4-connected graphs}

One can easily prove that if \( G \) contains \( H \) as a minor but does not contain any graph obtained from \( H \) by splitting a vertex as a minor, then \( G \) contains a subdivision of \( H \).

Let \( G \) and \( H \) be graphs such that \( G \) contains a subdivision of \( H \). There exists a mapping \( \eta \) that maps the vertices of \( H \) to distinct vertices of \( G \) and maps \( E(H) \) to pairwise internally disjoint paths in \( G \), such that, for any \( uv \in E(H) \), the path \( \eta(uv) \) has end-vertices \( \eta(u) \) and \( \eta(v) \), with no internal vertex of \( \eta(uv) \) contained in \( \{\eta(v) : v \in V(H)\} \). 

Let \( \eta(H) \) denote the subgraph of \( G \) formed by the image of \( \eta \). We also say that \( \eta(H) \) is a subdivision of \( H \) in \( G \). Define \( \eta(V(H)) := \{ \eta(v) : v \in V(H) \} \). The paths \( \eta(e) \) for \( e \in E(H) \) are called \emph{segments} of \( \eta \). For \( v \in V(H) \), the \emph{bark} of \( v \) with respect to \( \eta \) is the set consisting of \( \eta(v) \) and the internal vertices of all segments that have \( \eta(v) \) as an end-vertex.

An \emph{unstable fragment} is one of the following:
\begin{itemize}
	\item the union of \( \eta(v) \) and \( \eta(e) \) for some \( v \in V(H) \) and \( e \in E(H) \),
	\item the union of two segments with a common end-vertex,
	\item the union of three segments sharing a common end-vertex of degree 3 in \( \eta(H) \).
\end{itemize}
A bridge of \( \eta(H) \) in \( G \) is \emph{unstable} if its attachments are contained in an unstable fragment; otherwise, it is \emph{stable}.

Johnson and Thomas~\cite{Johnson2002} introduced a special type of subdivision called ``lexicographically maximal,'' which we refer to as a \emph{JT-subdivision}. For the technical definition of a JT-subdivision, we refer to \cite{Johnson2002}. The key fact is that if \( G \) contains a subdivision of \( H \), then \( G \) contains a JT-subdivision of \( H \).

The following lemma from \cite[(5.2)]{Johnson2002} will be useful in our proof. Note that, due to differing definitions of bridges, the following statement appears slightly different from that in \cite{Johnson2002}.

\begin{lemma}[\cite{Johnson2002}] \label{lem:JTstable}
	Let \( G \) and \( H \) be internally $4$-connected graphs such that \( G \) contains a JT-subdivision \( \eta \) of \( H \). If \( G \) does not contain any graph obtained from \( H \) by splitting a vertex as a minor, then every bridge of \( \eta(H) \) is stable and every segment of $\eta$ is an induced path in $G$.
\end{lemma}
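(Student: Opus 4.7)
The plan is to argue by contradiction: suppose $\eta$ is a JT-subdivision of $H$ in $G$, yet either $\eta(H)$ has an unstable bridge $B$, or some segment $\eta(e)$ carries a chord of $G$. In either event, $G$ contains more edges near $\eta(H)$ than $\eta(H)$ itself uses, localized around a small fragment of the subdivision. I would treat the chord case and the unstable-bridge case in parallel, since a chord of $\eta(e)$ behaves for this purpose like a degenerate bridge whose two attachments lie on a single segment, which is a special case of the first unstable-fragment type $\eta(v) \cup \eta(e)$. In each scenario, the attachments of the ``extra'' structure lie within a small neighborhood of exactly one branch vertex $\eta(v)$, with $v \in V(H)$ of degree at least four whenever the fragment type genuinely forces this.

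Next I would exploit the internal $4$-connectivity of $G$. The attachments of the extra structure, taken together with at most one or two of the ``corner'' vertices of the fragment $F$ (namely $\eta(v)$ for type~1, the endpoint $\eta(v)$ shared by two segments for type~2, or the degree-$3$ branch vertex $\eta(v)$ for type~3), would otherwise constitute a cut of order at most three separating a large piece of $B$ from the rest of $\eta(H)$. Internal $4$-connectivity forbids this unless the cut separates off a single vertex, which is a trivial subcase easily handled by direct re-routing. After this reduction, $B$ has several attachments on the segments incident to $\eta(v)$, distributed in a way that is tightly constrained by the JT-maximality of $\eta$ (which precludes re-routings that strictly increase the subdivision in the JT lexicographic order).

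The crux is then to convert this local picture into a minor of $G$ realizing a splitting $H'$ of $H$ at $v$. The segments of $\eta$ through $\eta(v)$ correspond to the edges of $H$ incident to $v$; using the attachments of $B$ as cut-points, I would partition these segments into two groups, each containing at least two edges incident to $v$, and then build $\mu(v_1)$ from $\eta(v)$ together with parts of one group, $\mu(v_2)$ from a connected sub-bridge of $B$ together with shifted segment-endpoints from the other group, and realize the $v_1v_2$-edge as a path inside $B$. When no such partition is available, the JT-maximality of $\eta$ is invoked instead to produce a lexicographically larger subdivision through $B$, contradicting the choice of $\eta$. The main obstacle I anticipate is precisely this last step: ensuring, uniformly across the chord case and all three fragment types, that $\mu(v_1)$ and $\mu(v_2)$ can always be chosen internally connected with at least two of $v$'s incident segments on each side, and that a $v_1v_2$-path through $B$ exists. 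The delicate bookkeeping of attachments, and the precise way in which JT-maximality rules out alternative re-routings in each fragment type, is where the bulk of the work lies and is the step for which the ``lexicographically maximal'' hypothesis was tailored.
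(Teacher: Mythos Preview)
The paper does not prove this lemma at all: it is quoted verbatim as (5.2) of Johnson and Thomas~\cite{Johnson2002}, with the explicit remark that the technical definition of a JT-subdivision is deferred to that reference. There is therefore no ``paper's own proof'' to compare your proposal against.

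As for the proposal itself, it is a plausible high-level outline of the Johnson--Thomas argument, but it is not a proof, and you say as much in your last paragraph. The entire content of the lemma lives in the step you flag as the obstacle: showing that an unstable bridge (or a chord) either yields a splitting minor of $H$ or permits a re-routing of $\eta$ that is lexicographically larger. That step cannot be carried out without the actual definition of the JT ordering, which neither you nor the present paper supplies. In particular, your appeal to internal $4$-connectivity of $G$ to bound the attachment pattern is fine, but the assertion that ``when no such partition is available, the JT-maximality of $\eta$ is invoked to produce a lexicographically larger subdivision'' is exactly the case analysis that constitutes the proof in~\cite{Johnson2002}, and you have not reproduced it. So the proposal identifies the right mechanism but stops short of the substance; to complete it you would need to import the definition of the lexicographic order on subdivisions and the specific re-routing arguments from Johnson--Thomas.
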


We also need the following lemma from \cite[(7.2)]{Johnson2002}

\begin{lemma}[\cite{Johnson2002}] \label{lem:JTedge}
	Let \( G \) and \( H \) be internally $4$-connected graphs such that \( G \) contains a JT-subdivision \( \eta(H) \) of \( H \). Let $u$ be a vertex of H of degree three, and let $e_1$ and $e_2$ be two distinct edges of $H$ incident with $u$. If there exists an edge of $G$ joining a vertex $v$ in $\eta(e_1) - \eta(u)$ and an internal vertex of $\eta(e_2)$, then the path $\eta(e_1)[\eta(u), v]$ has only one edge.
\end{lemma}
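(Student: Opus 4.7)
My approach is to argue by contradiction using the lex-maximality of the JT-subdivision $\eta$. Write $\eta(e_1)$ as the path $\eta(u) = a_0, a_1, \ldots, a_k = \eta(x_1)$, where $x_1$ denotes the end of $e_1$ distinct from $u$; let $v = a_j$. Let $e_3$ be the third edge of $H$ incident to $u$, let $x_2, x_3$ be the ends of $e_2, e_3$ other than $u$, and let $w$ be the internal vertex of $\eta(e_2)$ joined to $v$ by the given chord. Suppose for contradiction that $j \geq 2$.

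The plan is to reroute $\eta$ through the chord $vw$ to obtain a new subdivision $\eta'$ of $H$ in $G$ whose JT-invariant strictly exceeds that of $\eta$. In the typical case where $v$ is internal to $\eta(e_1)$, I set $\eta'(u) := v$, $\eta'(e_1) := \eta(e_1)[v, \eta(x_1)]$, $\eta'(e_2) := vw \cup \eta(e_2)[w, \eta(x_2)]$, and $\eta'(e_3) := \eta(e_1)[\eta(u), v] \cup \eta(e_3)$, with $\eta'$ agreeing with $\eta$ on every other branch vertex and segment. A routine check—using that $\eta$ is a subdivision and that $vw$ is a single chord—confirms $\eta'$ is a valid subdivision of $H$ in $G$: the three new segments are pairwise internally disjoint, arising from disjoint sub-arcs of $\eta(e_1), \eta(e_2), \eta(e_3)$ joined at $v$ via the chord, and they do not meet any other segment internally. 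Since $j \geq 2$, the new segment $\eta'(e_3)$ is strictly longer than $\eta(e_3)$ by $j$ edges and, crucially, its interior absorbs the former branch vertex $\eta(u)$ together with $a_1, \ldots, a_{j-1}$. This is precisely the sort of local modification that increases the JT lex-invariant of~\cite{Johnson2002}, contradicting the maximality of $\eta$. The boundary case $v = \eta(x_1)$ is handled by a symmetric rerouting that instead promotes $w$ to a branch vertex in place of $\eta(x_2)$, since one cannot directly set $\eta'(u) := \eta(x_1)$.

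The main obstacle is the explicit lex-comparison between $\eta$ and $\eta'$. While $\eta'(e_3)$ lengthens, the segments $\eta'(e_1)$ and $\eta'(e_2)$ shrink, so the bark at $\eta(x_1)$ loses $j$ vertices and the bark at $\eta(x_2)$ loses $i - 1$ vertices (where $w = b_i$ on $\eta(e_2)$). One must therefore invoke the prescribed lex-ordering of Johnson and Thomas—comparing bark sizes in the stipulated order—to verify that the gains at the bark of $x_3$ (and, depending on the position of $w$, at the bark of the branch vertex corresponding to $u$) outweigh these losses. If the rerouting $\eta'$ above does not secure the improvement directly (for instance when $w$ lies far from $\eta(u)$ on $\eta(e_2)$), I replace it by the symmetric rerouting centered at $w$ (setting $\eta'(u) := w$ in place of $v$); by the hypothesis $j \ge 2$ and the structure of the lex-invariant, at least one of the two reroutings always produces the required strict improvement.
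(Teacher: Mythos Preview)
The paper does not prove this lemma; it is quoted as (7.2) from Johnson and Thomas and invoked as a black box, so there is no in-paper argument to compare against.

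Your strategy---reroute through the chord $vw$ and appeal to lexicographic maximality---is the right mechanism, and it is indeed how such statements are established in the Johnson--Thomas framework. But the argument as written has a real gap at the decisive step. You never state what the JT lex-invariant actually is (the present paper deliberately declines to define it, deferring to the source), and without that definition the comparison between $\eta$ and $\eta'$ cannot be carried out. You observe yourself that the rerouting lengthens $\eta'(e_3)$ by $j$ edges while shortening $\eta'(e_1)$ by $j$ and altering $\eta'(e_2)$ by an amount depending on the position of $w$; whether the net effect is an increase in the invariant depends entirely on the precise ordering convention, which you have not supplied. Your fallback claim---that ``at least one of the two reroutings always produces the required strict improvement''---is asserted rather than proved, and the boundary case $v=\eta(x_1)$ is dispatched with a one-line gesture toward a ``symmetric rerouting'' that is never written down or checked. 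To turn this sketch into a proof you would have to reproduce the exact definition of a lexicographically maximal subdivision from Johnson--Thomas and then verify the inequality explicitly; what you have now is a plausible outline of the intended mechanism, not a proof.
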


\subsection{Properties of minor-minimal counterexamples}

A graph is a \emph{minor-minimal} counterexample to Conjecture~\ref{con:DM} if it is a counterexample, and no proper minor of it is a counterexample. Ding and Marshall~\cite{Ding2018} proved that such a graph is internally 4-connected.

\begin{lemma}[\cite{Ding2018}] \label{lem:i4c}
	Every minor-minimal counterexample to Conjecture~\ref{con:DM} is internally $4$-connected.
\end{lemma}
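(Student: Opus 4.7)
The plan is by contradiction. Suppose $G$ is a minor-minimal counterexample to Conjecture~\ref{con:DM} that fails to be internally 4-connected. Since $G$ is 3-connected, this failure produces a 3-cut $S = \{x, y, z\}$ that either is not independent in $G$, or separates $G - S$ into components whose vertex sets $A_1, A_2$ satisfy $|A_1|, |A_2| \ge 2$. I first handle the latter case.

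Writing $V_i := A_i \cup S$, define the auxiliary graph $G_i^+ := G[V_i] + \{xy, yz, xz\}$, adding any missing triangle edges on $S$. I would show first that each $G_i^+$ is 3-connected: any cut of size less than 3 in $G_i^+$, glued along $S$ with $G[V_{3-i}]$, would descend to a cut of the same size in $G$. Second, that $G_i^+$ is a proper minor of $G$: since $G$ is 3-connected and $S$ is a minimum cut with $N_G(A_{3-i}) = S$, the subgraph $G[V_{3-i}]$ contains three internally disjoint paths realizing the three pairs in $S$ (a rooted triangle-subdivision on $S$); contracting each such path to its end-pair yields the missing triangle edges, and the minor is proper because $|A_{3-i}| \ge 2$. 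As $G_i^+$ is a minor of $G$, it avoids $K_{3,4}$, $\mathfrak{Q}^+$, and $\mathfrak{H}$ as minors, so by minor-minimality of $G$ the graph $G_i^+$ must be hamiltonian.

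The crux is then to combine Hamilton cycles $C_1, C_2$ of $G_1^+, G_2^+$ into a Hamilton cycle of $G$. Each $C_i$ is split by $\{x, y, z\}$ into three arcs whose end-pairs are exactly the pairs in $\binom{S}{2}$ and whose internal vertices partition $A_i$. A Hamilton cycle of $G$ results from choosing, for each pair $pq$, one of the two corresponding arcs from $C_1$ or $C_2$ so that the chosen arcs cover $A_1 \cup A_2$ and close into a single cycle through $x, y, z$. This is feasible precisely when, for each pair $pq$, at most one of the two $pq$-arcs has internal vertices --- equivalently, one of them is just the added edge $pq$. When the Hamilton cycles at hand do not exhibit this compatibility, I would replace $G_i^+$ by alternative 3-connected minors of $G$ --- for instance $G[V_i]$ augmented by one or two extra vertices adjacent to prescribed subsets of $S$ --- which, by exactly the same minor-minimality argument, are also hamiltonian and thereby supply Hamilton $p$-$q$ paths in $G[V_i]$ (possibly avoiding one vertex of $S$) for prescribed pairs $pq$. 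A short case analysis on the triangle edges used by $C_1, C_2$ then produces the splicing and hence a Hamilton cycle of $G$, contradicting that $G$ is non-hamiltonian.

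For the remaining case in which $S$ is not independent, say $xy \in E(G)$, I pass to $G - xy$: this is a proper minor of $G$, non-hamiltonian (since $G$ is), and avoids all forbidden minors. If $G - xy$ is still 3-connected, it is a strictly smaller counterexample, contradicting minor-minimality. Otherwise, a 2-cut $\{u, v\}$ of $G - xy$ must separate $x$ from $y$, and after analyzing whether $\deg_G(x)$ or $\deg_G(y)$ equals 3, one obtains either a 3-cut of $G$ that already has the desired internally-4-connected form (isolating a vertex via an independent cut, which contradicts the assumption that $G$ has a bad 3-cut only if $S$ itself was the bad cut --- and then a simple degree argument eliminates this possibility) or a 3-cut of $G$ with both sides of size at least 2, reducing to the first case. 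The main obstacle throughout is the splicing step in the main case, where the combinatorial matching of arcs requires carefully chosen auxiliary minors of $G$ to force Hamilton paths with the right endpoints on each side of the cut.
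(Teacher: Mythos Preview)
The paper does not prove this lemma at all: it is quoted verbatim from Ding and Marshall~\cite{Ding2018} and used as a black box. So there is no ``paper's own proof'' to compare your argument against, and your write-up should be read as an attempt to reproduce the Ding--Marshall argument.

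Your outline follows the standard template for showing that minor-minimal members of a minor-closed non-hamiltonian class are internally $4$-connected, and the overall architecture is sound. Two points deserve tightening before this could be called a proof rather than a plan. First, the assertion that $G_i^{+}$ is a minor of $G$ hinges on $G[V_{3-i}]$ carrying a rooted $K_3$ minor at $S$; this is true when $|A_{3-i}|\ge 2$, but it is not the same as ``three internally disjoint paths realising the three pairs in $S$'' (that would be a cycle through $x,y,z$, which is strictly stronger), and it does require an argument. Second, and more seriously, the splicing step is exactly where the work lies, and you openly hand-wave it. Once you are in the bad case where some pair $pq$ has non-trivial arcs on both sides, producing the needed Hamilton $p$--$q$ path in $G[V_i]$ (or in $G[V_i]-r$) via an auxiliary minor is delicate: the natural candidates---$G[V_i]$ plus one or two pendant vertices over prescribed subsets of $S$---are not automatically $3$-connected, and they are not automatically minors of $G$ either, since that requires partitioning $A_{3-i}$ into connected pieces with the right adjacencies to $S$, which can fail. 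Making this go through needs a genuine case analysis keyed to which triangle edges $C_1,C_2$ use and to the adjacencies between $S$ and the two sides; ``a short case analysis'' undersells it. The non-independent-$3$-cut case is similarly sketched rather than argued.
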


The following two lemmas from~\cite{Lo2024}, together with the preceding lemma, will be used frequently in the proof of Conjecture~\ref{con:DM}.

\begin{lemma}[\cite{Lo2024}] \label{lem:na4}
	Every minor-minimal counterexample to Conjecture~\ref{con:DM} contains no pair of adjacent vertices each with degree at least $4$.
\end{lemma}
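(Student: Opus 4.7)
The plan is to delete the edge $uv$ and locate a 2-cut of $G - uv$, whose interplay with internal 4-connectivity of $G$ will force either $\deg_G(u)$ or $\deg_G(v)$ to be less than 4. Suppose for contradiction that $G$ contains adjacent vertices $u, v$ with $\deg_G(u), \deg_G(v) \geq 4$, and set $G' := G - uv$.

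I would first show that $G'$ is not 3-connected. Every Hamilton cycle of $G'$ is also one of $G$, so $G'$ is non-hamiltonian; and as a proper minor of $G$, the graph $G'$ inherits the exclusion of $K_{3,4}$, $\mathfrak{Q}^+$, and $\mathfrak{H}$ as minors. By the minor-minimality of $G$, the graph $G'$ cannot itself be a counterexample to Conjecture~\ref{con:DM}, so it must fail to be 3-connected. Hence $G'$ has a 2-cut $S = \{s_1, s_2\}$. Since $G$ is 3-connected, $S$ is not a 2-cut of $G$, which forces $u$ and $v$ to lie in distinct components $C_u, C_v$ of $G' - S$, with $u \in C_u$ and $v \in C_v$. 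From $\deg_G(u) \geq 4$ we obtain $|N_{G'}(u)| \geq 3 > |S|$, so $C_u \supsetneq \{u\}$; thus $G - \{u, s_1, s_2\}$ has at least two components (namely something inside $C_u \setminus \{u\}$ and the component containing $v$), making $\{u, s_1, s_2\}$ a 3-cut of $G$.

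Finally I would apply internal 4-connectivity (Lemma~\ref{lem:i4c}): the 3-cut $\{u, s_1, s_2\}$ is independent—in particular $u \not\sim s_1$ and $u \not\sim s_2$—and separates $G$ into a single vertex $p$ and a connected remainder. The vertex $p$ satisfies $N_G(p) \subseteq \{u, s_1, s_2\}$, and by 3-connectivity $\deg_G(p) \geq 3$, so $N_G(p) = \{u, s_1, s_2\}$. Now I would split on where $p$ lies. If $p \in C_u \setminus \{u\}$, then $C_u = \{u, p\}$; combined with $u \not\sim s_1, s_2$, the set $N_G(u)$ is contained in $\{v, p\}$, giving $\deg_G(u) \leq 2$, a contradiction. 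Otherwise $p = v$, whence $N_G(v) = \{u, s_1, s_2\}$ and $\deg_G(v) = 3$, again contradicting the degree assumption. The main obstacle I anticipate is the first structural step: extracting a 3-cut of $G$ from a 2-cut of $G - uv$ and verifying that $\deg_G(u) \geq 4$ is precisely what rules out the degenerate case $C_u = \{u\}$; once this is in place, internal 4-connectivity delivers the contradiction through a short case split.
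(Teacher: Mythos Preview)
Your argument is correct. The paper does not prove this lemma itself; it is quoted from~\cite{Lo2024}, so there is no in-paper proof to compare against. Your approach---delete $uv$, invoke minor-minimality to force a $2$-cut in $G-uv$, promote it to a $3$-cut $\{u,s_1,s_2\}$ of $G$ using $\deg_G(u)\ge 4$, and then let internal $4$-connectivity (Lemma~\ref{lem:i4c}) finish via the independence of the cut and the single-vertex side---is the natural one and goes through cleanly. One minor point worth making explicit (you use it implicitly): $G'-S$ has \emph{exactly} two components $C_u,C_v$, since any further component would remain separated in $G-S$, contradicting $3$-connectivity of $G$; this is what guarantees that the two sides of the $3$-cut $\{u,s_1,s_2\}$ are precisely $C_u\setminus\{u\}$ and $C_v$, so your case split ``$p\in C_u\setminus\{u\}$'' versus ``$p=v$'' is exhaustive.
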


\begin{lemma}[\cite{Lo2024}] \label{lem:n3c}
	Every minor-minimal counterexample to Conjecture~\ref{con:DM} does not contain any cycle of length $3$.
\end{lemma}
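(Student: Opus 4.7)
The plan is to argue by contradiction. Suppose $G$ is a minor-minimal counterexample to Conjecture~\ref{con:DM} containing a triangle $uvw$. By Lemma~\ref{lem:i4c}, $G$ is internally $4$-connected, and by Lemma~\ref{lem:na4} applied to the three edges of the triangle, at least two of $u,v,w$ have degree~$3$; say $v$ and $w$ do. Let $v'$ be the third neighbour of $v$ and $w'$ the third neighbour of $w$. A short check using $3$-connectivity shows $v' \neq w'$: otherwise $\{u, v'\}$ would separate $\{v, w\}$ from $V(G) \setminus \{u, v, w, v'\}$, which is nonempty since $G$ is non-hamiltonian and therefore has at least five vertices.

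The key step is to contract the edge $vw$, producing a proper minor $G' = G/vw$ in which the merged vertex $x$ has neighbourhood $\{u, v', w'\}$. By minor-minimality, $G'$ is not itself a counterexample; but any $K_{3,4}$, $\mathfrak{Q}^+$, or $\mathfrak{H}$ minor of $G'$ would lift to one in $G$, so $G'$ must either fail to be $3$-connected or be hamiltonian.

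If $G'$ is hamiltonian, then a Hamilton cycle $C'$ uses exactly two of $x$'s three neighbours, giving three cases; in each, the subpath through $x$ can be replaced by a length-$3$ path through $\{v,w\}$ using triangle edges (namely $v' v w w'$ replacing $v' x w'$, $u w v v'$ replacing $u x v'$, or $u v w w'$ replacing $u x w'$). This yields a Hamilton cycle in $G$, contradicting non-hamiltonicity.

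Otherwise, $G'$ has a $2$-cut $\{a, b\}$. Since $G$ is $3$-connected, the merged vertex must lie in $\{a, b\}$---otherwise $\{a, b\}$ would already be a $2$-cut of $G$. Say $a = x$; then $\{v, w, b\}$ is a $3$-cut of $G$ containing the edge $vw$. But $G$ is internally $4$-connected and, containing a triangle, is not isomorphic to $K_{3,3}$; so every $3$-cut of $G$ must be independent, giving the final contradiction. The only subtlety I foresee is verifying the lifting step in the hamiltonian case, which reduces to the preliminary observation $v' \neq w'$ together with the simplicity of $G$.
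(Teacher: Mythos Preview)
The paper does not prove this lemma; it is quoted from \cite{Lo2024} without argument. Your proposed proof is correct and self-contained. The contraction $G'=G/vw$ is the natural move, and your case split---$G'$ hamiltonian versus $G'$ with a $2$-cut---covers everything, since any forbidden minor in $G'$ would already be a minor of $G$. In the hamiltonian case your three replacements are valid because $v'\neq w'$, $v'\notin N(w)$, and $w'\notin N(v)$ (the latter two follow from $\deg(v)=\deg(w)=3$). In the $2$-cut case, the identification $G'-\{x,b\}=G-\{v,w,b\}$ shows $\{v,w,b\}$ is a $3$-cut of $G$ containing the edge $vw$, which contradicts internal $4$-connectivity since $G\not\cong K_{3,3}$.

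One cosmetic remark: the justification ``$G$ is non-hamiltonian and therefore has at least five vertices'' is not literally true as stated; what you need is that $G$ is internally $4$-connected (Lemma~\ref{lem:i4c}), which by definition forces $|V(G)|\ge 5$. This does not affect the validity of the argument.
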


\section{Wagner minors and Hamilton cycles in projective-planar graphs} \label{sec:projective-planar}

In this section, we prove the projective-planar case of Ding and Marshall's conjecture. Throughout, we assume that $G$ is a minor-minimal counterexample to Conjecture~\ref{con:DM} and that $G$ can be embedded on the projective plane. Our goal is to derive a contradiction by showing that $G$ contains a Hamilton cycle.

We first show that $G$ must contain a Wagner minor.

\begin{claim} \label{cla:Wagner}
	$G$ contains a minor of the Wagner graph.
\end{claim}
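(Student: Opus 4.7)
The plan is to argue by contradiction. Suppose $G$ contains no minor of the Wagner graph. Since $G$ is internally $4$-connected by Lemma~\ref{lem:i4c}, Theorem~\ref{thm:Wagner} forces $G$ into one of the five structural classes listed there, and I will derive a contradiction in each, using that $G$ is a minor-minimal counterexample to Conjecture~\ref{con:DM} together with Lemmas~\ref{lem:na4} and~\ref{lem:n3c}.

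Three of the five conclusions dispose of themselves quickly. If $G$ is planar, the planar case of Conjecture~\ref{con:DM} established in~\cite{Lo2024} already supplies a minor of $K_{3,4}$, $\mathfrak{Q}^+$, or $\mathfrak{H}$, contradicting that $G$ is a counterexample. If $|V(G)|\le 7$, then since $G$ is $3$-connected and non-hamiltonian we have $\alpha(G)>\kappa(G)\ge 3$ by Chv\'atal--Erd\H{o}s, so $\alpha(G)\ge 4$; combined with the minimum-degree condition this forces $|V(G)|=7$ with every vertex of the independent set of size $4$ adjacent to all three remaining vertices, giving $K_{3,4}\subseteq G$ and hence a forbidden minor. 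If $G\cong L(K_{3,3})$, then any two edges of $K_{3,3}$ sharing a vertex give a triangle in $G$, contradicting Lemma~\ref{lem:n3c}.

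Next, suppose $\{v_1,v_2,v_3,v_4\}$ is a vertex cover of $G$ and set $I=V(G)\setminus\{v_1,v_2,v_3,v_4\}$. Then $I$ is independent and every $u\in I$ has degree $3$ or $4$, with all neighbors in $\{v_1,\ldots,v_4\}$. If some $u\in I$ has degree $4$, Lemma~\ref{lem:na4} gives $\deg(v_j)\le 3$ for all $j$, and a double count of edges between $I$ and $\{v_1,\ldots,v_4\}$ forces $|V(G)|\le 7$, reducing to the previously handled case. Otherwise each $u\in I$ has a unique non-neighbor $f(u)\in\{v_1,\ldots,v_4\}$; Lemma~\ref{lem:n3c} implies that any edge $v_iv_j\in E(G)$ must collapse the fibres $f^{-1}(v_k)$ for $k\notin\{i,j\}$, and applying the 3-cut condition $S=\{v_1,\ldots,v_4\}\setminus\{v_j\}$ together with internal $4$-connectivity bounds each $|f^{-1}(v_k)|\le 1$. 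This corners $G$ into $K_{4,4}$ minus a perfect matching on $|V(G)|=8$ vertices, which is hamiltonian, giving the required contradiction.

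Finally, suppose $G-\{v_1,v_2\}$ is a cycle $C=c_1c_2\cdots c_nc_1$. Since $G$ is $3$-connected, each of $v_1,v_2$ has at least two neighbors on $C$, and at least three when $v_1v_2\notin E(G)$; internal $4$-connectivity further restricts the cyclic spacing of these neighbors by forbidding $3$-cuts of the form $\{v_1,c_i,c_j\}$ except when $\{c_i,c_j\}$ is almost consecutive. A case analysis on whether $v_1v_2\in E(G)$ and on the cyclic positions of the neighbors produces a Hamilton cycle of $G$, either by inserting $v_1$ and $v_2$ at two disjoint pairs of consecutive vertices on $C$ or by using the edge $v_1v_2$ to splice both extra vertices into a single edge of $C$; this contradicts $G\in\mathcal{G}_3$. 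The main obstacle is the vertex-cover case, where the combinatorics of the bipartite-like structure between $I$ and $\{v_1,\ldots,v_4\}$ must be resolved through careful use of triangle-freeness and internal $4$-connectivity; the cycle-plus-two case is more mechanical but still requires enumerating adjacency patterns.
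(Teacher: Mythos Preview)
Your overall strategy matches the paper's: argue by contradiction and dispatch the five alternatives of Theorem~\ref{thm:Wagner}. The case-by-case tactics differ. For $|V(G)|\le 7$ your Chv\'atal--Erd\H{o}s argument is slicker than the paper's longest-cycle approach; for $L(K_{3,3})$ you use Lemma~\ref{lem:n3c} where the paper simply notes hamiltonicity. In the vertex-cover case---which you flag as the main obstacle---the paper is actually more direct than your fibre analysis: with $|V(G)|\ge 8$ already in hand, pick any four vertices $w_1,\dots,w_4$ outside the cover; internal $4$-connectivity forces the bipartite graph between $\{v_1,\dots,v_4\}$ and $\{w_1,\dots,w_4\}$ to be $K_{4,4}$ minus a matching, hence to contain a cube, so $G$ is hamiltonian if $|V(G)|=8$ and contains $\mathfrak{Q}^+$ otherwise. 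Your fibre argument is correct but does more work than needed. For the cycle-plus-two case, the paper cites the known dichotomy from~\cite{Maharry2016} (either every cycle vertex is adjacent to both $v_1,v_2$, or the adjacencies alternate around an even cycle) and reads off hamiltonicity. Your proposed direct construction needs care here: in the alternating pattern neither $v_i$ has two consecutive neighbors on $C$, so ``insert each $v_i$ between two of its consecutive neighbors'' fails as stated, and a different rerouting (e.g.\ replacing two cycle edges by crossing paths $c_1v_1c_3$ and $c_2v_2c_4$) is required.
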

\begin{proof}
	By Lemma~\ref{lem:i4c}, $G$ is internally 4-connected. Since it is proven in \cite{Lo2024} that $G$ is non-planar, it suffices to consider the four remaining situations stated in Theorem~\ref{thm:Wagner}.
	
	Suppose $G$ has at most seven vertices. Let $C$ be a longest cycle in $G$. Since $G$ is not hamiltonian, there exists a vertex \( v_0 \in V(G) \setminus V(C) \). Clearly, there are three paths joining \( v_0 \) to \( C \), such that they only intersect at \( v_0 \). Moreover, the other ends of these three paths are pairwise non-adjacent in \( C \) because \( C \) is a longest cycle in \( G \). Thus, \( C \) must have length exactly 6. We can write \( C := v_1 v_2 v_3 v_4 v_5 v_6 v_1 \), where \( v_0 \) is adjacent to \( v_1 \), \( v_3 \), and \( v_5 \). By replacing \( C \) with the cycle \( v_1 v_2 v_3 v_0 v_5 v_6 v_1 \), we can conclude that \( v_1 v_4 \in E(G) \). Similarly, we have \( v_3 v_6, v_5 v_2 \in E(G) \). Therefore, \( G \) contains \( K_{3, 4} \) as a subgraph, which is a contradiction. Hence, we may assume \( |V(G)| \geq 8 \).
	
	Note that the line graph of \( K_{3, 3} \) is hamiltonian. Therefore, \( G \) is not isomorphic to the line graph of \( K_{3, 3} \).
	
	Suppose \( G \) contains four vertices \( v_1, v_2, v_3, v_4 \) such that every edge is incident to at least one of these four vertices. For any \( \{w_1, w_2, w_3, w_4\} \subseteq V(G) \setminus \{v_1, v_2, v_3, v_4\} \), it is not hard to show that the graph induced by the edges joining \( \{v_1, v_2, v_3, v_4\} \) and \( \{w_1, w_2, w_3, w_4\} \) contains a subgraph isomorphic to the cube. Therefore, \( G \) is hamiltonian if \( |V(G)| = 8 \), and \( G \) contains \( \mathfrak{Q}^+ \) as a subgraph if \( |V(G)| > 8 \); either case leads to a contradiction.
	
	Finally, it is known that if \( G \) contains two vertices \( w_1 \) and \( w_2 \) whose removal results in a cycle \( C \), then either all vertices of \( C \) are adjacent to both \( w_1 \) and \( w_2 \), or \( C \) has an even length, denoted by \( C := v_1 v_2 \dots v_{2t} v_1 \), such that \( v_1, v_3, \dots, v_{2t-1} \) are joined to \( w_1 \) and \( v_2, v_4, \dots, v_{2t} \) are joined to \( w_2 \) (cf.\ \cite{Maharry2016}). In either case, \( G \) is hamiltonian, leading to a contradiction.
	
	Therefore, by Theorem~\ref{thm:Wagner}, \( G \) must contain a minor of the Wagner graph.
\end{proof}

We now define the Wagner graph \( V_8 \) as the graph with vertex set \( \{v_1, v_2, \dots, v_8\} \), where each \( v_i \) (for \( i \in [8] \)) has precisely three neighbors: \( v_{i-1} \), \( v_{i+1} \), and \( v_{i+4} \), with indices taken modulo \( 8 \). 

By Claim~\ref{cla:Wagner}, there exists a mapping \( \eta \) that maps the vertices of \( V_8 \) to distinct vertices of \( G \) and maps \( E(V_8) \) into pairwise internally disjoint paths in \( G \). For any \( uv \in E(V_8) \), the path \( \eta(uv) \) joins \( \eta(u) \) and \( \eta(v) \), with no internal vertex of \( \eta(uv) \) lying in \( \bigcup_{w \in V(V_8)} \eta(w) \). 

We call \( \eta(v_i v_{i+4}) \) a \emph{chord path}. 

Denote by \( O^\eta \) the cycle in \( G \) formed by the union of \( \eta(v_1 v_2), \eta(v_2 v_3), \dots, \eta(v_7 v_8), \eta(v_8 v_1) \).

The vertices of \( V_8 \) and those of \( O^\eta \) are naturally ordered cyclically.

We fix an embedding of \( G \) on the projective plane. It is well known that \( O^\eta \) must be a contractible cycle, with at most one chord path lying in its interior (cf.~\cite{Maharry2017}). Whenever possible, we choose a subdivision \( \eta \) such that all chord paths lie in the exterior of \( O^\eta \). Given this, we further choose \( \eta \) so that the interior of \( O^\eta \) is inclusionwise maximal.

If both \( \eta(v_i v_{i+4}) \) and \( \eta(v_{i+1} v_{i+5}) \) lie in the exterior of \( O^\eta \), let \( O^\eta_i \) denote the cycle in \( G \) formed by the union of \( \eta(v_i v_{i+4}) \), \( \eta(v_i v_{i+1}) \), \( \eta(v_{i+4} v_{i+5}) \), and \( \eta(v_{i+1} v_{i+5}) \). If \( \eta(v_i v_{i+4}) \) lies in the exterior of \( O^\eta \) while \( \eta(v_{i+1} v_{i+5}) \) lies in the interior, let \( O^\eta_i \) denote the cycle in \( G \) formed by the union of \( \eta(v_i v_{i+4}) \), \( \eta(v_i v_{i+1}) \), \( \eta(v_{i+1} v_{i+2}) \), \( \eta(v_{i+4} v_{i+5}) \), \( \eta(v_{i+5} v_{i+6}) \), and \( \eta(v_{i+2} v_{i+6}) \). In either case, \( O^\eta_i \) is a contractible cycle.

\begin{claim} \label{cla:emptyext}
	The exterior of \( O^\eta \) contains no vertex of \( G \). Hence, every bridge of \( O^\eta \) lies within the interior of \( O^\eta \).
\end{claim}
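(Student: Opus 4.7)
The plan is to argue by contradiction, exploiting the inclusionwise maximality of the interior of $O^\eta$. Suppose, on the contrary, that some vertex $u \in V(G)\setminus V(O^\eta)$ lies strictly in the exterior of $O^\eta$. I will aim to construct an alternative subdivision $\eta'$ of $V_8$ in $G$ whose cycle $O^{\eta'}$ is contractible, still has its chord paths in the exterior (respecting the first-order preference used to choose $\eta$), and whose interior, in the fixed projective-planar embedding of $G$, strictly contains the interior of $O^\eta$ together with $u$. This directly contradicts the maximal choice of $\eta$.

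To build $\eta'$, I would first analyze the face $F$ of $\eta(V_8)$ (embedded in the exterior M\"obius band) that contains $u$. Its boundary is a closed walk made up of sub-paths of $O^\eta$ and of exterior chord paths. Since $G$ is internally $4$-connected (Lemma~\ref{lem:i4c}) and $|V(G)| \geq 8$ (by Claim~\ref{cla:Wagner}), there are several internally disjoint paths from $u$ to the boundary of $F$ lying inside the closure of $F$. These paths let me replace one of the segments $\eta(v_i v_{i+1})$ of $O^\eta$ with a rerouted path through $u$, thereby absorbing $u$ either onto the new eight-cycle or into its interior while keeping the images of $V(V_8)$ fixed.

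To complete the construction, some chord path may have to swap roles with a segment so that the new eight-cycle remains contractible and all chord paths can be drawn on the exterior side (or, at worst, one on the interior side, as permitted by the cited Maharry result~\cite{Maharry2017}). After this rearrangement, $\eta'$ is a valid $V_8$-subdivision in $G$ whose cycle $O^{\eta'}$ is contractible and whose interior properly contains the original interior of $O^\eta$ together with $u$, contradicting the inclusionwise maximality of the interior. The \emph{hence} clause then follows at once: any bridge of $O^\eta$ contains a vertex of $G - V(O^\eta)$, which by the first part must lie in the interior.

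The main obstacle is the case analysis that arises in the construction of the rerouting: whether $u$ is an internal vertex of an exterior chord path or lies in a separate bridge, and whether the current configuration of chord paths is four-exterior or three-exterior-and-one-interior. In each configuration, the flexibility provided by internal $4$-connectivity, combined with the rigid topology of Wagner-graph embeddings in the projective plane, is expected to supply the required rerouting, but one must verify carefully that no chord path is forced illicitly into the interior and that $O^{\eta'}$ is again contractible.
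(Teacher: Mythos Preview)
Your plan captures one of the two mechanisms used in the paper's proof, but misses the other, and that omission is a genuine gap. The paper does use the inclusionwise maximality of the interior of $O^\eta$ to reroute in many subcases, exactly as you suggest. However, in the configuration where one chord path, say $\eta(v_{i+1}v_{i+5})$, lies in the \emph{interior} of $O^\eta$, rerouting alone does not always succeed. Concretely, if an internal vertex of the exterior chord path $\eta(v_iv_{i+4})$ sends a path landing on $\eta(v_{i+1}v_{i+2})$, $\eta(v_{i+5}v_{i+6})$, or $\eta(v_{i+2}v_{i+6})$ (rather than on $\eta(v_iv_{i+1})$ or $\eta(v_{i+4}v_{i+5})$), there is no way to enlarge the interior of $O^\eta$ while respecting the first-order preference that all chord paths be exterior. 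The same obstruction arises for an exterior vertex $u$ (not on a chord path) when its three attachment paths land at $\{\eta(v_i),\eta(v_{i+2})\}$ and at some $u_3$ in $\eta(v_{i+4}v_{i+5})\cup\eta(v_{i+5}v_{i+6})$.

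In precisely these subcases the paper abandons the maximality argument and instead exhibits a $K_{3,4}$ minor in $G$ (via the second and third graphs of Figure~\ref{fig:emptyext}), contradicting the standing hypothesis that $G$ is a counterexample to Conjecture~\ref{con:DM}. Your outline never invokes the $K_{3,4}$-minor-free assumption, so as written it cannot close these cases; the phrase ``is expected to supply the required rerouting'' is where the argument would actually fail. To repair your proof you must either find a rerouting that works in those configurations (the paper does not, and it seems unlikely one exists respecting the chord-path preference), or, as the paper does, show that each residual configuration forces a $K_{3,4}$ minor.
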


\begin{proof}
	We first show that every chord path \( \eta(v_i v_{i+4}) \) lying in the exterior of \( O^\eta \) has no internal vertex. Suppose, to the contrary, that \( \eta(v_i v_{i+4}) \) contains some internal vertex. Then, since \( G \) is 3-connected, there exists a path \( P \) internally disjoint from \( \eta(V_{8}) \) that connects an internal vertex \( v \) of \( \eta(v_i v_{i+4}) \) to a vertex \( w \) not in \( \eta(v_i v_{i+4}) \). Without loss of generality, we may assume that \( w \) lies in \( O^\eta_i \).
	
	Suppose \( \eta(v_{i+1} v_{i+5}) \) also lies in the exterior of \( O^\eta \). We can construct another subdivision \( \eta' \) of \( V_{8} \) such that the interior of \( O^{\eta'} \) properly contains that of \( O^\eta \), as follows. If \( w \) is an internal vertex of \( \eta(v_{i+1} v_{i+5}) \), we define \( \eta'(v_i) \) as \( v \), \( \eta'(v_{i+1}) \) as \( w \), \( \eta'(v_i v_{i+1}) \) as \( P \), \( \eta'(v_{i-1} v_i) \) as the union of \( \eta(v_{i-1} v_i) \) and \( \eta(v_{i} v_{i+4})[\eta(v_i), v] \), \( \eta'(v_{i+1} v_{i+2}) \) as the union of \( \eta(v_{i+1} v_{i+2}) \) and \( \eta(v_{i+1} v_{i+5})[\eta(v_{i+1}), w] \), \( \eta'(v_{i} v_{i+4}) \) as \( \eta(v_{i} v_{i+4})[v, \eta(v_{i+4})] \), and \( \eta'(v_{i+1} v_{i+5}) \) as \( \eta(v_{i+1} v_{i+5})[w, \eta(v_{i+5})] \). Moreover, \( \eta' \) agrees with \( \eta \) on \( (V(V_{8}) \cup E(V_{8})) \setminus \{ v_i, v_{i+1}, v_i v_{i+1}, v_{i-1} v_i, v_{i+1} v_{i+2}, v_i v_{i+4}, v_{i+1} v_{i+5} \} \). This construction contradicts the choice of \( \eta \) that the interior of $O^\eta$ is inclusionwise maximal. 
	Similar constructions apply to the cases where \( w \in V(\eta(v_{i} v_{i+1})) \setminus \{ \eta(v_i) \} \) or \( w \in V(\eta(v_{i+4} v_{i+5})) \setminus \{ \eta(v_{i+4}) \} \); the details are easy and hence omitted.
	
	Suppose \( \eta(v_{i+1} v_{i+5}) \) lies in the interior of \( O^\eta \). If \( w \in (V(\eta(v_i v_{i+1})) \cup V(\eta(v_{i+4} v_{i+5}))) \setminus \{v_i, v_{i+4}\} \), then, similarly to the previous paragraph, we can construct another subdivision of \( V_{8} \) to contradict the maximality of the interior of \( O^\eta \). Otherwise, \( G \) would contain a minor of the second graph given in Figure~\ref{fig:emptyext} and hence a \( K_{3,4} \) minor, which is impossible. This proves that all chord paths in the exterior of $O^\eta$ have no internal vertices.
	
	If there exists a vertex \( v \) in the exterior of \( O^\eta \), then \( v \) must lie in the interior of some \( O^\eta_i \) (in this case, \( \eta(v_i v_{i+1}) \) lies in the exterior of \( O^\eta \)). Since \( G \) is 3-connected, \( v \) is joined to three distinct vertices \( u_1, u_2, u_3 \) in \( O^\eta_i \) via three internally disjoint paths.
	
	Suppose \( \eta(v_{i+1} v_{i+5}) \) also lies in the exterior of \( O^\eta \). Then, we can assume \( u_1 \) and \( u_2 \) are in \( \eta(v_i v_{i+1}) \), which results in a path internally disjoint from \( \eta(V_{8}) \) that joins two vertices of \( \eta(v_i v_{i+1}) \) (passing through \( v \)). This leads to a contradiction, as it implies the existence of another subdivision \( \eta'(V_8) \) of \( V_8 \), such that the interior of \( O^{\eta'} \) properly contains the interior of \( O^\eta \).
	
	Now suppose \( \eta(v_{i+1} v_{i+5}) \) lies in the interior of \( O^\eta \). In this case, we can assume \( u_1 \) and \( u_2 \) are in the union of \( \eta(v_i v_{i+1}) \) and \( \eta(v_{i+1} v_{i+2}) \). If \( \{u_1, u_2\} \neq \{\eta(v_i), \eta(v_{i+2})\} \), then, again, we can obtain a subdivision of \( V_{8} \) that contradicts the maximality of the interior of \( O^\eta \). Consequently, we must have \( \{u_1, u_2\} = \{\eta(v_i), \eta(v_{i+2})\} \), and \( u_3 \) lies in the union of \( \eta(v_{i+4} v_{i+5}) \) and \( \eta(v_{i+5} v_{i+6}) \). 
	
	If \( u_3 \neq \eta(v_{i+5}) \), then \( G \) contains the second graph shown in Figure~\ref{fig:emptyext} as a minor. Otherwise, \( G \) contains the third graph shown in Figure~\ref{fig:emptyext} as a minor. In either case, \( G \) has a \( K_{3,4} \) minor, which is a contradiction.
\end{proof}

\begin{figure}[!ht]
	\centering{%
		\includegraphics[scale=.8]{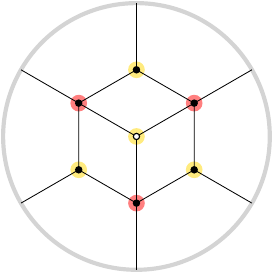}
	}
	\hfill
	{%
		\includegraphics[scale=.8]{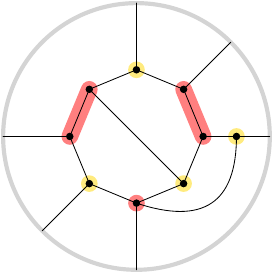}
	}
	\hfill
	{%
		\includegraphics[scale=.8]{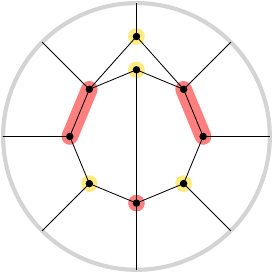}
	}
	\hfill
	{%
		\includegraphics[scale=.8]{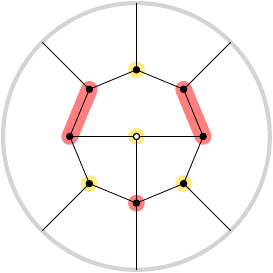}
	}
	\caption{The graph \( K_{3,4} \) and three graphs, each containing a \( K_{3,4} \) minor, embedded on the projective plane, with the gray circle representing the crosscap.}
	\label{fig:emptyext}
\end{figure}

In the following, we examine the structure of bridges of \( O^\eta \). There are two types of bridges, depending on whether they contain a chord path. 

Let \( B \) be a bridge of \( O^\eta \). Since \( G \) is 3-connected, \( B \) has at least three attachments. Let \( u_1, u_2, u_3 \) be three attachments of \( B \) appearing in \( O^\eta \) in this order (so that \( O^\eta[u_1, u_2] \) does not contain \( u_3 \)). We call \( O^\eta(u_1, u_2) \), \( O^\eta(u_2, u_3) \), and \( O^\eta(u_3, u_1) \) the \emph{sectors} associated with these attachments. A sector is \emph{sung} if it contains a vertex from \( \eta(V(V_8)) \). 

\begin{claim} \label{cla:widebridge}
	Let \( B \) be a bridge of \( O^\eta \) that does not contain any chord path. Suppose \( u_1, u_2, u_3 \) are three attachments of \( B \) appearing in \( O^\eta \) in this order. Then, at least one of the sectors associated with \( u_1, u_2, u_3 \) is not sung.
\end{claim}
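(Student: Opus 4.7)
The plan is to argue by contradiction: suppose all three sectors $O^\eta(u_1, u_2)$, $O^\eta(u_2, u_3)$, $O^\eta(u_3, u_1)$ are sung, and derive that $G$ contains a $K_{3,4}$-minor, contradicting the fact that $G$ is a minor-minimal counterexample to Conjecture~\ref{con:DM}. I would first contract the bridge $B$ to a single vertex $w$, so that $w$ becomes adjacent to $u_1, u_2, u_3$; then contract every segment of $\eta$ and the internal vertices of every chord path to a single edge, identifying each $u_i$ that lies internally on some segment $\eta(v_p v_{p+1})$ with one of $v_p, v_{p+1}$, choosing the identification so that both adjacent sectors retain at least one branch vertex. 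Under the sung hypothesis this is possible in the generic case, and the result is a minor of $G$ isomorphic to $V_8$ together with an extra vertex $w$ attached to three vertices $a, b, c$ of the $8$-cycle of $V_8$ such that each of the three cyclic arcs between consecutive attachments contains at least one of the remaining five cycle vertices. By the dihedral symmetry of $V_8$, up to rotation the arc-size triple is either $(1, 1, 3)$ or $(1, 2, 2)$; after relabeling, we may take $w \sim v_1, v_3, v_5$ in the first case and $w \sim v_1, v_3, v_6$ in the second.

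In both cases the branch sets $X_1 = \{v_1, v_8\}$, $X_2 = \{v_3\}$, $X_3 = \{v_5, v_6\}$, $Y_1 = \{w\}$, $Y_2 = \{v_2\}$, $Y_3 = \{v_4\}$, $Y_4 = \{v_7\}$ realize a $K_{3,4}$-minor of $V_8 + w$: $X_1$ and $X_3$ are connected via the cycle edges $v_1 v_8$ and $v_5 v_6$, and each of the twelve required $X_i$-$Y_j$ edges is a direct edge of $V_8 + w$. Explicitly, $Y_1$ is joined to the three $X_i$ via the three $w$-edges; $Y_2 = \{v_2\}$ is joined to them via $v_1 v_2$, $v_2 v_3$, and the chord $v_2 v_6$; $Y_3 = \{v_4\}$ via the chord $v_4 v_8$, $v_3 v_4$, and $v_4 v_5$; and $Y_4 = \{v_7\}$ via $v_7 v_8$, the chord $v_3 v_7$, and $v_6 v_7$. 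Pulling this minor back through the earlier contractions yields a $K_{3,4}$-minor of $G$.

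The main obstacle lies in the contraction step: it is not automatic that the identifications of internal attachments with branch vertices can be chosen so as to keep every arc non-empty. The delicate subcase is when an attachment $u_i$ is internal to some segment $\eta(v_p v_{p+1})$ while each of the two adjacent sectors contains only the single branch vertex $v_p$ or $v_{p+1}$, so that either identification of $u_i$ empties an adjacent sector. By Lemma~\ref{lem:i4c}, internal $4$-connectivity of $G$ forces $B$ in this subcase to consist of a single vertex $v$ whose three attachments lie in three consecutive segments of $O^\eta$. One may then exclude this configuration by appealing to the inclusionwise maximality of the interior of $O^\eta$: rerouting $\eta$ through $v$ --- for example, replacing $\eta(v_2 v_3)$ by the path from $v_2$ via $u_1$, $v$, and $u_3$ to $v_3$, where $u_1 \in \eta(v_1 v_2)$, $u_2 \in \eta(v_2 v_3)$, and $u_3 \in \eta(v_3 v_4)$ --- produces a subdivision $\eta'$ of $V_8$ whose interior strictly contains that of $\eta$, a contradiction.
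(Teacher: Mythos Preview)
Your treatment of the generic case---contract $B$ to a vertex, push each internal attachment onto a branch vertex while keeping all three arcs non-empty, and then read off a $K_{3,4}$-minor from the two dihedral types of $V_8+w$---is correct and is essentially what the paper does (the paper phrases the reduction to $\{u_1,u_2,u_3\}\subset\eta(V(V_8))$ slightly differently, but the content is the same).

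The gap is in your handling of the delicate subcase.

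\emph{First}, you cannot conclude from internal $4$-connectivity that $B$ is a single vertex. In the statement $u_1,u_2,u_3$ are just \emph{some} three attachments of $B$; nothing prevents $B$ from having a fourth attachment, in which case $\{u_1,u_2,u_3\}$ is not even a cut of $G$, and Lemma~\ref{lem:i4c} says nothing.

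\emph{Second}, and more seriously, your rerouting argument is the wrong way round. By Claim~\ref{cla:emptyext} the bridge $B$ lies in the \emph{interior} of $O^\eta$. Hence any modification of $O^\eta$ that sends part of the cycle through a vertex of $B$ produces a new contractible cycle whose interior is strictly \emph{smaller} than that of $O^\eta$, not larger; the maximality of the interior of $O^\eta$ therefore gives no contradiction. (There is also a bookkeeping issue: your proposed replacement path from $\eta(v_2)$ via $u_1,v,u_3$ to $\eta(v_3)$ overlaps the existing segments $\eta(v_1v_2)$ and $\eta(v_3v_4)$, so those segments and the chord paths attached at $\eta(v_2),\eta(v_3)$ would all have to be redefined; but even after doing that, the interior only shrinks.)

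The paper resolves the stuck configuration not via $K_{3,4}$ at all: when no contraction that keeps every sector sung is available, the paper shows that the resulting graph contains a $\mathfrak{Q}^+$ minor, which is equally forbidden in a minor-minimal counterexample to Conjecture~\ref{con:DM}. Your proposal aims exclusively at $K_{3,4}$, and there is no evident $K_{3,4}$-minor in the stuck configuration; you need $\mathfrak{Q}^+$ here.
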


\begin{proof}
	By Claim~\ref{cla:emptyext}, \( B \) lies within the interior of \( O^\eta \).
	
	Suppose that every sector is sung. Let \( H \) be obtained from the union of \( \eta(V_{8}) \) and \( B \) by contracting \( V(B) \setminus V(O^\eta) \) to a single vertex \( v \).
	We will derive a contradiction by showing that \( H \) contains a minor of either \( K_{3,4} \) or \( \mathfrak{Q}^+ \).
	
	We first consider the case when \( \{u_1, u_2, u_3\} \subset \eta(V(V_{8})) \). 
	
	We show that there exists \( i \in [8] \) such that \( \eta(v_i) \in \{u_1, u_2, u_3\} \), \( \eta(v_{i+4}) \notin \{u_1, u_2, u_3\} \), and each of \( O^\eta(\eta(v_i), \eta(v_{i+4})) \) and \( O^\eta(\eta(v_{i+4}), \eta(v_i)) \) contains one vertex from \( \{u_1, u_2, u_3\} \). If there exists \( j \in [8] \) such that \( \{u_1, u_2, u_3\} \subset V(O^\eta[\eta(v_j), \eta(v_{j+4})]) \), then, assuming \( u_1, u_2, u_3 \) appear in \( O^\eta[\eta(v_j), \eta(v_{j+4})] \) in this order, we take \( i \in [8] \) with \( \eta(v_i) = u_2 \). Otherwise, we choose any \( i \in [8] \) such that \( \eta(v_i) \in \{u_1, u_2, u_3\} \). In either case, this yields the desired configuration.

	Without loss of generality, assume \( u_2 = \eta(v_i) \). We have \( u_1 \) lies in \( O^\eta(\eta(v_{i+4}), \eta(v_{i-1})) \) and \( u_3 \) lies in \( O^\eta(\eta(v_{i+1}), \eta(v_{i+4})) \).
	
	Now, label the vertices of \( K_{3,4} \) as shown in Figure~\ref{subfig:K34}. Define \( \mu \) as follows: \( \mu(k_1) := \{u_2\} \), \( \mu(k_2) := V(O^\eta(\eta(v_{i+4}), \eta(v_{i-1}))) \), \( \mu(k_3) := V(O^\eta(\eta(v_{i+1}), \eta(v_{i+4}))) \), \( \mu(k^1_1) := \{v\} \), \( \mu(k^1_2) := \{v_{i+4}\} \), \( \mu(k^2_1) := \{v_{i-1}\} \), and \( \mu(k^2_2) := \{v_{i+1}\} \). \( \mu \) does realize a minor of \( K_{3,4} \), although we omit defining the connecting paths, as they can be easily determined. This is illustrated in the first graph in Figure~\ref{fig:emptyext}, where the lowest vertex represents \( u_2 \), the central vertex represents \( v \), and so on.
	
	Next, consider the case where not all of \( u_1, u_2, u_3 \) are in \( \eta(V(V_{8})) \). Recall that our goal is to show that $H$ contains a minor of \( K_{3,4} \) or \( \mathfrak{Q}^+ \). If any vertex \( w \in \{u_1, u_2, u_3\} \) lies in \( O^\eta(v_i, v_{i+1}) \) for some \( i \in [8] \), we can contract \( O^\eta[v_i, w] \) or \( O^\eta[w, v_{i+1}] \), treating the contracted path as an attachment, provided the property that every sector is sung is preserved. More precisely, if \( O^\eta[v_{i-1}, v_i] \) (respectively, \( O^\eta[v_{i+1}, v_{i+2}] \)) contains none of \( u_1, u_2, u_3 \), then we can contract \( O^\eta[v_i, w] \) (respectively, \( O^\eta[w, v_{i+1}] \)) to obtain a new graph and proceed to find the desired minor in it.
	
	If no path contraction is possible yet not all attachments are in $\eta(V(V_8))$, then it is not hard to see that \( H \) contains the first graph shown in Figure~\ref{fig:wide} as a minor. (The drawing may be different if there is a chord path lying in the interior of $O^\eta$.) One can readily see that this graph contains \( \mathfrak{Q}^+ \) as a minor, which leads to a contradiction.
\end{proof}

\begin{figure}[!ht]
	\centering{%
		\includegraphics[scale=.8]{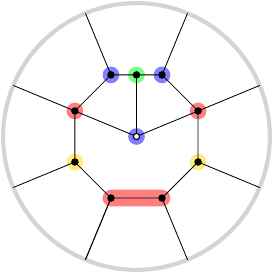}
	}
	\hfill
	{%
		\includegraphics[scale=.8]{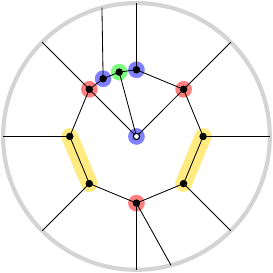}
	}
	\hfill
	{%
		\includegraphics[scale=.8]{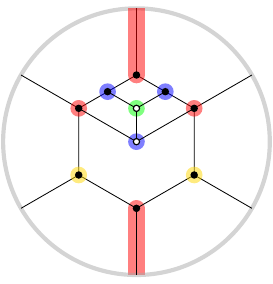}
	}
	\hfill
	{%
		\includegraphics[scale=.8]{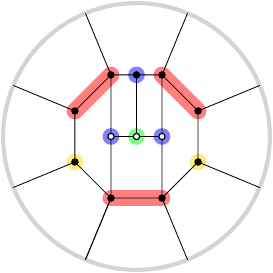}
	}
	\caption{Four graphs, each containing a minor of \( \mathfrak{Q}^+ \), embedded on the projective plane.}
	\label{fig:wide}
\end{figure}

\begin{claim} \label{cla:widebridge1}
	Let \( B \) be a bridge of \( O^\eta \) that does not contain any chord path. Then, there exists a subdivision $\tilde{\eta}(V_8)$ of \( V_8 \) with \( O^\eta = O^{\tilde{\eta}} \) and three attachments of \( B \) such that precisely two sectors are sung.
\end{claim}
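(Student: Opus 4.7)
The plan is to take $\tilde{\eta} = \eta$ and show that three attachments of $B$ can be chosen so that precisely two sectors are sung. By Claim~\ref{cla:widebridge}, the number of sung sectors for any triple is at most two; it therefore suffices to find a triple yielding at least two. I will argue by contradiction: assume that for every choice of three attachments of $B$, at most one sector is sung.

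Let $a_1, \dots, a_m$ denote the attachments of $B$ on $O^\eta$ in cyclic order (with $m \geq 3$ by the 3-connectivity of $G$), and let $A_i = O^\eta(a_i, a_{i+1})$ denote the intervening open arcs. I will first show that all branch vertices of $\eta(V(V_8))$ lying strictly inside the $A_i$'s are confined to a single arc, say $A_1$: if two distinct arcs $A_i, A_j$ each contained a strictly interior branch vertex, then a triple of attachments bookending both $A_i$ and $A_j$---for instance $\{a_i, a_{i+1}, a_j\}$ (or $\{a_i, a_{i+1}, a_{j+1}\}$ when the two arcs are adjacent)---would already produce two sung sectors. Next, by applying the same reasoning to triples of the form $\{a_1, a_2, a_k\}$ for each $k \in \{3, \dots, m\}$---whose sector $A_1$ is automatically sung, forcing the other two to be unsung---I will show that no attachment in $\{a_3, \dots, a_m\} \setminus \{a_k\}$ is a branch vertex; for $m \geq 4$, varying $k$ rules out every attachment among $a_3, \dots, a_m$ from being a branch vertex.

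With this structural information in hand, the complement arc of $A_1$---the arc of $O^\eta$ from $a_2$ through $a_3, \dots, a_m$ back to $a_1$---avoids branch vertices everywhere except possibly at the endpoints $a_1, a_2$ and at the single permitted branch-vertex attachment in the degenerate case $m = 3$. Either all attachments of $B$ then lie in a single segment $V(\eta(v_l v_{l+1}))$, which makes $\{\eta(v_l), \eta(v_{l+1})\}$ a 2-cut in $G$ and contradicts 3-connectivity; or (in the $m = 3$ degenerate case with one attachment at a branch vertex $\eta(v_p)$) all three attachments lie in $V(\eta(v_{p-1} v_p)) \cup V(\eta(v_p v_{p+1}))$, making $\{\eta(v_{p-1}), \eta(v_p), \eta(v_{p+1})\}$ a 3-cut whose small side contains the interior of $B$ together with the segment interiors, violating either the independent-set condition or the single-vertex-side condition of internal 4-connectivity (Lemma~\ref{lem:i4c}).

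The main obstacle will be the bookkeeping in this case analysis---particularly when $a_1$ or $a_2$ is itself a branch vertex, which requires adapting the triples used to derive the constraints---since each individual cut argument is elementary but the exhaustive subcase splitting is where most of the effort lies.
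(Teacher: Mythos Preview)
Your plan to always take $\tilde{\eta} = \eta$ cannot work, and the gap is in the final cut argument. Suppose all attachments of $B$ lie in a single segment $\eta(v_l v_{l+1})$. You assert that $\{\eta(v_l), \eta(v_{l+1})\}$ is then a $2$-cut of $G$, but this is false: the vertices internal to $\eta(v_l v_{l+1})$ may be joined by edges of $G$ (lying in the exterior of $O^\eta$, across the crosscap) to vertices on the opposite side of the octagon, for instance on $\eta(v_{l+4} v_{l+5})$. Such an edge has both endpoints on $O^\eta$, so it is not part of any bridge (recall the paper excludes such chords from its definition of bridge), and hence it does not give $B$ an extra attachment; but it does connect the segment interior to the rest of $G$ after $\eta(v_l),\eta(v_{l+1})$ are removed. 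In fact, $3$-connectivity \emph{forces} such an edge to exist (otherwise the extreme attachments $w_1,w_2$ of $B$ would themselves form a $2$-cut), so your would-be contradiction evaporates. The same objection defeats your $3$-cut argument in the $m=3$ case.

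This is precisely why the statement allows $\tilde{\eta}\neq\eta$. The paper's proof locates exactly such an exterior edge $e$ from $O^\eta(w_1,w_2)$ to the opposite segment and uses it to build a new subdivision $\eta'$ of $V_8$ with $O^{\eta'}=O^\eta$: one deletes an old chord $\eta(v_j)\eta(v_{j+4})$ and inserts $e$ in its place, which relocates a branch vertex into the arc between two attachments and makes a second sector sung. Your combinatorial reduction (confining the branch vertices to a single arc $A_1$) is correct and useful, but to finish you must exploit the exterior edge rather than pretend it does not exist.
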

\begin{proof}
	
	Let \( u_1, u_2, u_3 \) be three attachments of \( B \) appearing in \( O^\eta \) in this order.  
	
	By Claim~\ref{cla:widebridge}, some sector associated with \( u_1, u_2, u_3 \) is not sung. Since at least one sector must be sung, the claim follows immediately if there are two sung sectors. 
	
	Without loss of generality, assume \( u_1 \) and \( u_2 \) lie in \( \eta(v_1 v_2) \).  
	
	Suppose \( u_3 \) also lies in \( \eta(v_1 v_2) \). 
	
	If \( \eta(v_1 v_2) \) does not contain all attachments of \( B \), then any attachment outside \( \eta(v_1 v_2) \), together with \( u_1 \) and \( u_2 \) or \( u_2 \) and \( u_3 \), yields two sung sectors. Thus, we assume that all attachments of \( B \) are contained within \( \eta(v_1 v_2) \). 
	
	Let \( w_1 \) and \( w_2 \) be the attachments of \( B \) such that \( O^\eta[w_1, w_2] \) is a subpath of \( \eta(v_1 v_2) \) containing all attachments of \( B \).  
	
	If both \( \eta(v_1 v_5) \) and \( \eta(v_2 v_6) \) lie in the exterior of \( O^\eta \), then by the 3-connectedness of \( G \), the maximality of the interior of \( O^\eta \), and Claim~\ref{cla:emptyext}, there exists an edge \( e \) with one end-vertex \( x_1 \) in \( O^\eta(w_1, w_2) \) and the other \( x_2 \) in \( \eta(v_5 v_6) \). Moreover, if not all vertices of \( O^\eta[w_1, w_2] \) are attachments, we choose \( e \) such that \( x_1 \) is not an attachment.  
	
	There exists a subdivision \( \eta' \) of \( V_8 \) such that \( \eta'(V_8) \) is obtained from \( \eta(V_8) \) by deleting one of \( \eta(v_1) \eta(v_5) \) or \( \eta(v_2) \eta(v_6) \) and adding \( e \).  
	
	If \( x_1 \) is not an attachment, we select \( w_1 \), \( w_2 \), and any other attachment \( w_3 \). Clearly, under \( \eta' \), there are at least two sung sectors associated with \( w_1, w_2, w_3 \).  
	
	If \( x_1 \) is an attachment, then by Lemma~\ref{lem:i4c}, \( B \) has at least four attachments. Selecting \( w_1 \), \( w_2 \), and another attachment distinct from \( w_1, w_2, x_1 \) again yields two sung sectors.  
	
	If one of \( \eta(v_1 v_5) \) or \( \eta(v_2 v_6) \) lies in the interior of \( O^\eta \), say \( \eta(v_1 v_5) \), then by the arguments above, there exists an edge \( e \) with one end-vertex \( x_1 \) in \( O^\eta(w_1, w_2) \) and the other \( x_2 \) in \( O^\eta[\eta(v_4), \eta(v_6)] \). Note that \( e \) can always be chosen such that \( x_2 \neq \eta(v_4) \); otherwise, \( \{w_1, w_2, \eta(v_4)\} \) would be a 3-cut violating Lemma~\ref{lem:i4c}. Furthermore, if \( B \) has exactly three attachments, we can always further require that \( x_1 \) is a non-attachment.

	If \( x_2 \) lies in \( O^\eta(\eta(v_4), \eta(v_6)) \), then there exists a subdivision of \( V_8 \) where all chord paths lie in the exterior, contradicting our choice of \( \eta \). 
	
	If \( x_2 = \eta(v_6) \), we can proceed as in the case where both \( \eta(v_1 v_5) \) and \( \eta(v_2 v_6) \) lie in the exterior of \( O^\eta \).  
	
	Thus, we may assume \( u_3 \) is not in \( \eta(v_1 v_2) \).  
	
	Suppose the sector \( O^\eta(u_2, u_3) \) is not sung. Then \( u_2 = \eta(v_2) \), \( u_3 \) lies in \( \eta(v_2 v_3) \), and \( \eta(v_2 v_6) \) lies in the exterior of \( O^\eta \). If there is an attachment of \( B \) in \( O^\eta(u_1, u_2) \) or \( O^\eta(u_2, u_3) \), replacing \( u_2 \) with this attachment yields two sung sectors. Thus, we assume no such attachment exists. It is also easy to show that \( u_1, u_2, u_3 \) are the only attachments of \( B \).  
	
	Without loss of generality, \( \eta(v_1 v_5) \) lies in the exterior of \( O^\eta \). By Lemma~\ref{lem:i4c}, \( u_1 \) is not adjacent to \( u_2 \). Thus, there exists an edge \( e \) joining \( O^\eta(u_1, u_2) \) with \( \eta(v_5 v_6) \), yielding another subdivision with two sung sectors associated with \( u_1, u_2, u_3 \). 
	
	Therefore, we can conclude that \( O^\eta(u_2, u_3) \) is sung. Symmetrically, \( O^\eta(u_3, u_1) \) is also sung. This proves the claim.
\end{proof}

\begin{claim} \label{cla:widebridge2}
	Let \( B \) be a bridge of \( O^\eta \) that does not contain any chord path. Then there exist a subdivision \( \tilde{\eta}(V_8) \) of \( V_8 \) with \( O^\eta = O^{\tilde{\eta}} \), four attachments \( \tilde{u}_1, \tilde{u}_2, \tilde{u}_3, \tilde{u}_4 \) of \( B \), such that \( \tilde{u}_1, \tilde{u}_2 \) are in \( \tilde{\eta}(v_1 v_2) \), \( \tilde{u}_3, \tilde{u}_4 \) are not in \( \tilde{\eta}(v_1 v_2) \), and there are two sung sectors associated with \( \tilde{u}_1, \tilde{u}_2, \tilde{u}_3 \).
\end{claim}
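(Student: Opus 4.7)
Apply Claim~\ref{cla:widebridge1} to obtain a subdivision $\tilde\eta$ of $V_8$ and three attachments $\tilde u_1, \tilde u_2, \tilde u_3$ of $B$ realising two sung sectors; after cyclically relabelling the branch vertices we may arrange $\tilde u_1, \tilde u_2 \in \tilde\eta(v_1 v_2)$ and $\tilde u_3 \notin \tilde\eta(v_1 v_2)$, so the unique unsung sector is $O^\eta(\tilde u_1, \tilde u_2)$, contained in $\tilde\eta(v_1 v_2)$. The task is to find a fourth attachment $\tilde u_4$ of $B$ outside $\tilde\eta(v_1 v_2)$; the two-sung-sector condition for the triple $(\tilde u_1, \tilde u_2, \tilde u_3)$ is then automatic, since it depends only on these three vertices. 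If $B$ has any attachment outside $\tilde\eta(v_1 v_2)$ different from $\tilde u_3$ we simply take it as $\tilde u_4$, so we may assume $\tilde u_3$ is the unique attachment of $B$ outside $\tilde\eta(v_1 v_2)$; I aim to derive a contradiction.

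The attachments of $B$ form a vertex cut of $G$, so Lemma~\ref{lem:i4c} leaves two possibilities. In the first, $B$ has exactly three attachments and $V(B) \setminus \{\tilde u_1, \tilde u_2, \tilde u_3\}$ is a single vertex $w$ adjacent to each $\tilde u_i$, i.e.\ $B$ is a claw. I contract the initial and terminal sub-paths $\tilde\eta(v_1 v_2)[\tilde\eta(v_1), \tilde u_1]$ and $\tilde\eta(v_1 v_2)[\tilde u_2, \tilde\eta(v_2)]$ so that $\tilde u_1 = \tilde\eta(v_1)$ and $\tilde u_2 = \tilde\eta(v_2)$; if $\tilde u_3$ is an interior vertex of some segment I further contract a suitable adjacent sub-path so that $\tilde u_3$ also becomes a branch vertex. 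The resulting minor of $G$ is a subdivision of $V_8$ augmented by $w$ adjacent to three branch vertices. Applying the templates for minors of $K_{3,4}$ and $\mathfrak{Q}^+$ employed in the proof of Claim~\ref{cla:widebridge}, and a short case analysis on the location of $\tilde u_3$ and on whether a chord path of $\tilde\eta$ lies in the interior of $O^\eta$ (as in Claim~\ref{cla:emptyext}), produces a minor of $K_{3,4}$ or $\mathfrak{Q}^+$ in $G$, contradicting the minor-minimality of $G$.

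In the remaining case, $B$ has $p \ge 4$ attachments, all but $\tilde u_3$ lying in $\tilde\eta(v_1 v_2)$. Pick an attachment $a \in \tilde\eta(v_1 v_2) \setminus \{\tilde u_1, \tilde u_2\}$. Since $V(B) \setminus V(O^\eta)$ is non-empty and connected, there is a path $Q$ in $B$ from $a$ to $\tilde u_3$ internally disjoint from $O^\eta$. I use $Q$ to reroute one of the chord paths of $\tilde\eta$ and promote $a$ to a new branch vertex, producing a subdivision $\tilde\eta'$ of $V_8$ with $O^{\tilde\eta'} = O^\eta$ in which the segment $\tilde\eta'(v_1 v_2)$ is a proper sub-path of the old $\tilde\eta(v_1 v_2)$; then at least two attachments of $B$ lie outside $\tilde\eta'(v_1 v_2)$, reducing to the trivial case. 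The main obstacle is the minor construction in the claw case: because only two sectors are sung, the templates of Claim~\ref{cla:widebridge} do not apply verbatim, and one has to absorb the unsung sector $O^\eta(\tilde u_1, \tilde u_2)$ into an appropriate supernode and treat separately the sub-cases where $\tilde u_3$ is a branch vertex, where $\tilde u_3$ is interior to some segment, and where one chord path of $\tilde\eta$ lies in the interior of $O^\eta$.
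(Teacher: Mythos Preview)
Your opening reduction from Claim~\ref{cla:widebridge1} is correct, and the paper begins the same way, but both of your cases have real gaps, and the mechanism you are missing is the same in each.

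In your Case~2, the path $Q$ cannot be used to reroute a chord. It lies inside $B$, hence in the interior of $O^\eta$, and its endpoints $a\in\tilde\eta(v_1v_2)$ and $\tilde u_3$ are in general not antipodal on $O^\eta$: in the actual argument $\tilde u_3$ may sit in $\eta(v_2v_3)$ or $\eta(v_8v_1)$, immediately next to $\tilde\eta(v_1v_2)$, so $Q$ cannot become a segment $\tilde\eta'(v_iv_{i+4})$ of any $V_8$ subdivision with $O^{\tilde\eta'}=O^\eta$. And even when a rerouting through $B$ were formally possible, you would still owe the verification that some triple retains two sung sectors under $\tilde\eta'$; that is the whole content of the claim and does not come for free after you move a branch vertex. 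Your Case~1 you yourself flag as incomplete: with only two sung sectors the templates of Claim~\ref{cla:widebridge} do not apply, and ``absorbing the unsung sector'' into a supernode is not a short case check.

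The paper's device is to reroute in the \emph{exterior} of $O^\eta$, which by Claim~\ref{cla:emptyext} contains no vertices. Write $w_1,w_2$ for the extreme attachments of $B$ in $\eta(v_1v_2)$. If no edge joins $O^\eta(w_1,w_2)$ to $\eta(v_5v_6)$ (respectively to $O^\eta[\eta(v_4),\eta(v_6)]$ when $\eta(v_1v_5)$ is interior), then $\{w_1,w_2,u_3\}$ is a $3$-cut; Lemma~\ref{lem:i4c} forces it to be independent and to isolate a single vertex, whence $O^\eta(w_1,w_2)$ is empty, so $w_1,w_2$ are simultaneously adjacent and non-adjacent---a contradiction. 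Hence such an exterior edge $e$ exists, and swapping the chord $\eta(v_2)\eta(v_6)$ for $e$ gives a legitimate subdivision $\eta'$ with $O^{\eta'}=O^\eta$ that leaves $B$ untouched as a bridge and plants a new branch vertex inside $O^\eta(w_1,w_2)$. What remains is a two-way split on which of $O^{\eta'}(u_3,w_1)$, $O^{\eta'}(w_2,u_3)$ is unsung: in each branch one either reads off the desired four-attachment configuration directly (after at most one further exterior-edge swap, using Lemma~\ref{lem:na4} to guarantee the edge exists), or exhibits a single explicit $\mathfrak{Q}^+$ minor. Your claw case is absorbed by the very same exterior-edge argument; no separate three-attachment analysis is needed.
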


\begin{proof}
	By Claim~\ref{cla:widebridge1}, there exist attachments \( u_1, u_2, u_3 \) of \( B \), occurring in \( O^\eta \) in this order, such that two sectors are sung. Thus, the claim almost holds, except that the existence of \( \tilde{u}_4 \) has not yet been ensured.
	
	Without loss of generality, assume \( O^\eta[u_1, u_2] \) is a subpath of \( \eta(v_1 v_2) \), and \( \eta(v_2 v_6) \) lies in the exterior of \( O^\eta \).
	
	Suppose the claim does not hold for \( \eta \). Then there are attachments \( w_1 \) and \( w_2 \) of \( B \) such that \( O^\eta[w_1, w_2] \) is a subpath of \( \eta(v_1 v_2) \) and contains all attachments of \( B \) except \( u_3 \).  
	
	We claim that there exists an edge \( e \) joining \( O^\eta(w_1, w_2) \) and \( \eta(v_5 v_6) \) if \( \eta(v_1 v_5) \) lies in the exterior of \( O^\eta \), and joining \( O^\eta(w_1, w_2) \) to \( \eta(v_6) \) if \( \eta(v_1 v_5) \) lies in the interior of \( O^\eta \). 
	
	If \( \eta(v_1 v_5) \) lies in the exterior of \( O^\eta \) and such an edge \( e \) does not exist, then \( \{w_1, w_2, u_3\} \) would form a 3-cut of \( G \). It follows from Lemma~\ref{lem:i4c} that \( B \) has precisely three attachments \( w_1, w_2, u_3 \), and \( w_1 \) and \( w_2 \) are non-adjacent. Thus, the desired edge \( e \) must exist. 
	
	For the same reason, if \( \eta(v_1 v_5) \) lies in the interior of \( O^\eta \), there is an edge \( e \) joining \( O^\eta(w_1, w_2) \) and \( O^\eta[\eta(v_4), \eta(v_6)] \). By the choice of \( \eta \), we have \( e \) connects \( O^\eta(w_1, w_2) \) to \( \eta(v_4) \) or \( \eta(v_6) \). If \( e \) is incident to \( \eta(v_4) \), then \( G \) contains one of the first three graphs given in Figure~\ref{fig:wide3} and hence \( K_{3,4} \) as a minor, which is impossible. Thus, \( e \) is incident to \( \eta(v_6) \).
	
	Let \( x_1 \in V(O^\eta(w_1, w_2)) \) and \( x_2 \in V( \eta(v_5 v_6)) \) be the end-vertices of \( e \). For the case where \( \eta(v_1 v_5) \) lies in the exterior of \( O^\eta \), it causes no loss of generality to assume \( x_2 \neq \eta(v_5) \). Define \( \eta' \) as the subdivision of \( V_8 \) such that \( \eta'(V_8) \) is obtained from \( \eta(V_8) \) by deleting \( \eta(v_2)\eta(v_6) \) and adding \( e \).  
	
	Consider \( \eta' \) and the attachments \( w_1, w_2, u_3 \). It is clear that \( O^{\eta'}(w_1, w_2) \) is sung, and precisely one of \( O^{\eta'}(u_3, w_1) \) or \( O^{\eta'}(w_2, u_3) \) is sung, by Claim~\ref{cla:widebridge}.  
	
	Assume \( O^{\eta'}(w_2, u_3) \) is not sung. Then \( u_3 \) must be in \( \eta(v_2 v_3) - \eta(v_2) \).  
	
	If there exists an attachment \( w \) of \( B \) in \( O^\eta(x_1, \eta(v_2)) \), then we can show that \( G \) contains a minor of \( \mathfrak{Q}^+ \) as follows. Adopt the vertex labeling of \( \mathfrak{Q}^+ \) as given in Figure~\ref{subfig:Q+}. Define \( \mu(q_1) := V(O^\eta[\eta(v_1), w_1]) \), \( \mu(q_2) := V(O^\eta[u_3, \eta(v_3)]) \), \( \mu(q_3) := V(O^\eta[x_2, \eta(v_6)]) \), \( \mu(q^1_1) := V(\eta(v_4 v_5)) \), \( \mu(q^1_2) := V(\eta(v_7 v_8)) \), \( \mu(q^2_1) := V(B) \setminus V(O^\eta) \), \( \mu(q^2_2) := \{\eta(v_2)\} \), \( \mu(q^2_3) := \{x_1\} \), and \( \mu(q^2) := \{w\} \). As illustrated in the second graph in Figure~\ref{fig:wide}, \( \mu \) realizes a minor of \( \mathfrak{Q}^+ \). This leads to a contradiction.  
	
	Thus, \( O^\eta(x_1, \eta(v_2)) \) contains no attachment of \( B \), and \( w_2 = \eta(v_2) \). Moreover, as \( O^\eta(u_2, u_3) \) is sung, we have $u_2 \neq w_2$, and \( u_1 \) and \( u_2 \) are in \( O^\eta[w_1, x_1] \). 
	
	If \( u_2 \neq x_1 \), then \( \eta' \) (with \( \eta'(v_1) = \eta(v_1) \)) yields the desired configuration by taking \( u_1' := u_1 \), \( u_2' := u_2 \), \( u_3' := u_3 \), and \( u_4' := w_2 \). 
	
	If \( u_2 = x_1 \), then, by Lemma~\ref{lem:na4}, \( u_2 \) and \( w_2 \) are not adjacent. In this case, there exists an edge \( e' \) joining \( O^\eta(u_2, w_2) \) and \( O^\eta[x_2, \eta(v_6)] \). Let \( \eta'' \) be the subdivision of \( V_8 \) such that \( \eta''(V_8) \) is obtained from \( \eta(V_8) \) by deleting \( \eta(v_2)\eta(v_6) \) and adding \( e' \). Similarly, one can readily obtain the desired configuration consisting of \( \eta'' \), \( u_1 \), \( u_2 \), \( u_3 \), and \( w_2 \).  
	
Assume that \( O^{\eta'}(u_3, w_1) \) is not sung. Then \( u_3 \) lies in \( \eta(v_8 v_1) - \eta(v_1) \) and \( w_1 = \eta(v_1) \). Moreover, since \( O^{\eta}(u_3, u_1) \) is sung, \( u_1 \) is in \( O^\eta(w_1, w_2) \). 

	We consider the subdivision \( \eta' \) with \( \eta'(v_1) = \eta(v_8) \) (and hence \( x_1 = \eta'(v_3) \)). Set \( u_1' := u_3 \), \( u_2' := w_1 \), \( u_3' := w_2 \), and \( u_4' := u_1 \). It is not hard to verify that this yields the desired configuration.  
\end{proof}

\begin{figure}[!ht]
	\centering{%
		\includegraphics[scale=.8]{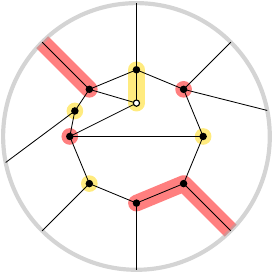}
	}
	\hfill
	{%
		\includegraphics[scale=.8]{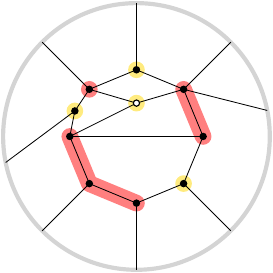}
	}
	\hfill
	{%
		\includegraphics[scale=.8]{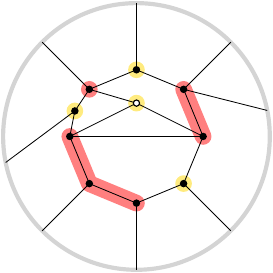}
	}
	\hfill
	{%
		\includegraphics[scale=.8]{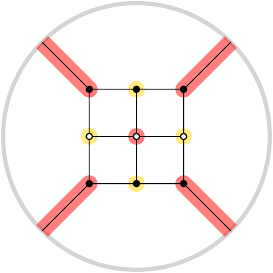}
	}
	\caption{Four graphs, each containing a minor of \( K_{3,4} \), embedded on the projective plane.}
	\label{fig:wide3}
\end{figure}

\begin{claim} \label{chordbridge}
	Suppose \( B \) is a bridge of \( O^\eta \) that contains a chord path \( \eta(v_i v_{i+4}) \). Then, \( B \) has no attachment in \( O^\eta(v_{i+1}, v_{i+3}) \) or in \( O^\eta(v_{i+5}, v_{i-1}) \).
\end{claim}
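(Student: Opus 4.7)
The strategy is to assume a forbidden attachment exists and then extract an explicit $K_{3,4}$ minor in $G$, contradicting the minor-minimality of $G$ as a counterexample to Conjecture~\ref{con:DM}. By the reflective symmetry of $V_8$ it suffices to handle an attachment $w \in O^\eta(v_{i+1}, v_{i+3})$, and after relabeling I set $i = 1$, so the chord path in question is $\eta(v_1 v_5)$ and $w \in O^\eta(v_2, v_4)$.

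The first step is to pin down the geometry and extract a Y-structure inside $B$. Since $B$ contains $\eta(v_1 v_5)$ as part of a bridge, this chord path has an internal vertex, so by Claim~\ref{cla:emptyext} it must lie in the interior of $O^\eta$; the other three chord paths then lie in the exterior and, again by Claim~\ref{cla:emptyext}, are just the single edges $\eta(v_2)\eta(v_6)$, $\eta(v_3)\eta(v_7)$, $\eta(v_4)\eta(v_8)$. Connectedness of $B$ supplies a path $P$ from $w$ to $\eta(v_1 v_5)$ internally disjoint from $O^\eta$; letting $x$ be the endpoint of $P$ on the chord path, the three subpaths $\eta(v_1 v_5)[\eta(v_1), x]$, $\eta(v_1 v_5)[x, \eta(v_5)]$, and $P$ meet only at $x$. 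Contracting everything else in $B$ into $x$, and contracting each subpath of $O^\eta$ away while keeping $w$ as a vertex (or identifying $w$ with $\eta(v_3)$ if they coincide), I obtain a minor $G_0$ of $G$ on the vertex set $\{\eta(v_1),\ldots,\eta(v_8),w,x\}$ whose edges are exactly the cycle edges of $O^\eta$ (refined at $w$), the three exterior chord edges, and the three Y-edges $x\eta(v_1)$, $x\eta(v_5)$, $xw$. This is precisely the chordbridge configuration of Figure~\ref{fig:emptyext}.

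To finish, I exhibit an explicit $K_{3,4}$ minor in $G_0$. On the side of size three take the branch sets $\{\eta(v_1),\eta(v_8)\}$, $\{\eta(v_3),w\}$, $\{\eta(v_5),\eta(v_6)\}$, each internally connected by a cycle edge (the middle set collapses to $\{\eta(v_3)\}$ in case $w = \eta(v_3)$); on the side of size four take the singletons $\{\eta(v_2)\}$, $\{\eta(v_4)\}$, $\{\eta(v_7)\}$, $\{x\}$. The twelve required cross-edges fall out directly; for instance $\{\eta(v_3),w\}$ reaches $\{\eta(v_2)\}$ via the cycle edge $w\eta(v_2)$, $\{\eta(v_4)\}$ via $\eta(v_3)\eta(v_4)$, $\{\eta(v_7)\}$ via the exterior chord $\eta(v_3)\eta(v_7)$, and $\{x\}$ via $wx$; the other eight edges are verified in the same way, using the cycle edges of $O^\eta$, the remaining two exterior chord edges, and the Y-edges at $x$. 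Hence $G$ contains a $K_{3,4}$ minor, which is the desired contradiction. The only real subtlety is the second step, extracting the Y when $B$ may have additional attachments and arbitrary internal complexity; however the Y itself depends only on the connectedness of $B$ together with the path $P$ from $w$ to the chord path, and the rest of $B$ may simply be contracted into $x$.
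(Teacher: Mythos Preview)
Your proof is correct and follows essentially the same approach as the paper's. The paper's own argument is a two-line proof: it invokes Claim~\ref{cla:emptyext} to place $B$ in the interior of $O^\eta$, and then asserts that the resulting configuration is the fourth graph of Figure~\ref{fig:emptyext}, which contains a $K_{3,4}$ minor. You have unpacked exactly that figure, exhibiting the explicit branch sets $\{\eta(v_1),\eta(v_8)\}$, $\{\eta(v_3),w\}$, $\{\eta(v_5),\eta(v_6)\}$ against $\{\eta(v_2)\}$, $\{\eta(v_4)\}$, $\{\eta(v_7)\}$, $\{x\}$; this is precisely the $K_{3,4}$ minor the paper leaves implicit. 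One cosmetic point: your sample edge ``$w\eta(v_2)$'' is literally present only when $w$ lies on the $\eta(v_2 v_3)$ side, but since the branch set $\{\eta(v_3),w\}$ meets $\{\eta(v_2)\}$ via either $w\eta(v_2)$ or $\eta(v_3)\eta(v_2)$ depending on the position of $w$, the verification goes through in all cases.
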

\begin{proof}
	Claim~\ref{cla:emptyext} asserts that \( B \) lies within the interior of \( O^\eta \).  
	If \( B \) has an attachment in \( O^\eta(v_{i+1}, v_{i+3}) \) or \( O^\eta(v_{i+5}, v_{i-1}) \), then \( G \) would contain the fourth graph in Figure~\ref{fig:emptyext} as a minor, which is impossible.
\end{proof} 

The following claim is crucial for finding paths used in constructing a Hamilton cycle.

\begin{claim} \label{cla:widebridge3}
	Let \( B \) be a bridge of \( O^\eta \). Then \( B \) has precisely four attachment, \( w_1, w_2, w_3, w_4 \), such that \( w_1 \) is adjacent to \( w_2 \) and \( w_3 \) is adjacent to \( w_4 \) in \( O^\eta \). Moreover, the union of \( B \) and the edges \( w_1w_2 \) and \( w_3w_4 \) contains two disjoint paths: one with end-vertices \( w_1 \) and \( w_2 \), and another with end-vertices \( w_3 \) and \( w_4 \). Together, these two paths span the union of \( B \) and the edges \( w_1w_2 \) and \( w_3w_4 \).  
\end{claim}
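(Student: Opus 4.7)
My plan is to treat the two cases distinguished by the preceding claims separately—bridges not containing a chord path (handled via Claim~\ref{cla:widebridge2}) and bridges containing one (handled via Claim~\ref{chordbridge})—and in each case first pin down the attachment set of $B$, then produce the spanning pair of paths by exploiting minor-minimality.

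For a non-chord bridge $B$, I would start from the subdivision $\tilde\eta$ given by Claim~\ref{cla:widebridge2}, with four attachments $\tilde u_1,\tilde u_2\in\tilde\eta(v_1v_2)$ and $\tilde u_3,\tilde u_4\notin\tilde\eta(v_1v_2)$, for which exactly one of the three sectors in the triple $\tilde u_1,\tilde u_2,\tilde u_3$ is unsung (Claim~\ref{cla:widebridge}); this unsung sector must be $O^{\tilde\eta}(\tilde u_1,\tilde u_2)$. Reapplying Claim~\ref{cla:widebridge2} to a triple containing $\tilde u_3,\tilde u_4$ together with one of $\tilde u_1,\tilde u_2$ would pin $\tilde u_3,\tilde u_4$ into a common segment $\tilde\eta(v_jv_{j+1})$. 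I would then argue that the two attachments in each pair are $O^\eta$-consecutive: any non-attachment interior vertex between them, combined with the 3-connectedness of $G$ and the maximality of the interior of $O^{\tilde\eta}$, would either allow a new subdivision of $V_8$ enlarging that interior (contradicting the choice of $\eta$) or collapse into one of the $K_{3,4}$/$\mathfrak{Q}^+$ templates depicted in Figures~\ref{fig:wide} and~\ref{fig:wide3}. The same templates would be used to rule out a fifth attachment, leaving exactly four attachments with $w_1w_2,w_3w_4\in E(O^\eta)$.

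For a chord bridge $B$ containing $\eta(v_iv_{i+4})$, Claim~\ref{chordbridge} already confines the attachments to $O^\eta[v_{i-1},v_{i+1}]\cup O^\eta[v_{i+3},v_{i+5}]$, and a symmetric but simpler analysis—now using the chord path itself as a built-in internal path of $B$—gives four attachments arranged in two $O^\eta$-consecutive pairs. In both cases, I would then analyse $B\cup\{w_1w_2,w_3w_4\}$ and try to produce two vertex-disjoint paths joining $w_1$ to $w_2$ and $w_3$ to $w_4$ together covering $V(B)$: if some internal vertex $x$ of $B$ is missed by every such pair, then either a small vertex cut of $B$ separates $x$ from the four attachments, contradicting the internal $4$-connectedness supplied by Lemma~\ref{lem:i4c}, or the obstruction—combined with $O^\eta$ and a few contractions—produces a $K_{3,4}$ or $\mathfrak{Q}^+$ minor, contradicting minor-minimality; Lemmas~\ref{lem:na4} and~\ref{lem:n3c} eliminate small degenerate sub-configurations inside $B$ that could otherwise block such paths.

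The main obstacle, in my view, is the spanning statement rather than the attachment pattern: pinning down the four attachments is largely bookkeeping on top of Claims~\ref{cla:widebridge2} and~\ref{chordbridge}, but ruling out every "stranded" vertex in $B-V(O^\eta)$ without introducing a forbidden minor requires a careful enumeration of the possible configurations of $B-V(O^\eta)$, and this is where the bulk of the work will lie.
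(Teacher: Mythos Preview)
Your overall two-case split and your diagnosis that the spanning assertion is the hard part both match the paper. But two concrete steps in your plan will not go through as written.

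First, you cannot ``reapply Claim~\ref{cla:widebridge2} to a triple containing $\tilde u_3,\tilde u_4$''. Claim~\ref{cla:widebridge2} is an \emph{existence} statement: it hands you some subdivision and some four attachments; it does not let you feed in a triple of your choosing and read off a conclusion about it. The paper instead fixes the configuration from Claim~\ref{cla:widebridge2}, uses Claim~\ref{cla:widebridge} once more to see that $O^\eta(u_4,u_3)$ is unsung, and then does a case analysis on the segment containing $u_3$. The cases $u_3\in\eta(v_2v_3)$ and $u_3\in\eta(v_3v_4)$ are not reduced to the good case; they are shown to be \emph{impossible} by exhibiting a $\mathfrak{Q}^+$ minor (the third graph in Figure~\ref{fig:wide}) after first producing two disjoint paths in $B-\eta(v_2)$ via a $3$-cut argument. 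Only the cases $u_3,u_4\in\eta(v_iv_{i+1})$ with $i\in\{4,5\}$ survive.

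Second, your argument for $O^\eta$-adjacency of the pairs (``a non-attachment vertex between them would enlarge the interior of $O^\eta$'') does not work, because $B$ sits in the \emph{interior} of $O^\eta$; the maximality of the interior constrains only exterior edges. What the paper actually does---and this is the missing technical idea---is work inside the planar $2$-connected region $\bar B$ bounded by $O^\eta[w_1,w_2]$, $O^\eta[w_4,w_3]$, and the two boundary paths $P_1,P_2$. It takes the innermost $P_1$--$P_2$ crossing path $H_0$ on the $w_1w_2$-side and the facial path $H_1$ just below it, and shows (via explicit $\mathfrak{Q}^+$ templates, the fourth graph in Figure~\ref{fig:wide}) that $H_1$ must meet $O^\eta[w_4,w_3]$; this forces $H_0\neq O^\eta[w_1,w_2]$, hence $w_1w_2\in E(O^\eta)$. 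The same $H_0/H_1$ machinery, together with Lemmas~\ref{lem:na4} and~\ref{lem:n3c}, pins down the internal structure of $\bar B$ so tightly that only one residual cycle $C$ can contain stray vertices; a further bridge analysis of $C$ (using a $\mathfrak{Q}^+$ template built from $c_1,\dots,c_5$ on $H_1$) empties that region and yields the two spanning paths explicitly. The chord-bridge case reuses exactly this $H_0/H_1$ argument after Claim~\ref{chordbridge} confines the attachments. Your proposal does not contain this inner/outer-path idea, and without it the ``stranded vertex $x$'' step has no mechanism for producing the required minors.
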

\begin{proof}
	We first consider that $B$ does not contain any chord path. We apply Claim~\ref{cla:widebridge2} to the bridge \( B \). Without loss of generality, \( B \) has attachments \( u_1, u_2, u_3, u_4 \) such that \( u_1, u_2 \) are in \( \eta(v_1 v_2) \), \( u_3, u_4 \) are not in \( \eta(v_1 v_2) \), the sectors \( O^\eta(u_2, u_3) \) and \( O^\eta(u_3, u_1) \) are sung, and \( u_4 \) is in \( O^\eta(\eta(v_2), u_3) \).
	
	The sector \( O^\eta(u_4, u_3) \) is not sung; otherwise, the three sectors \( O^\eta(u_1, u_4) \), \( O^\eta(u_4, u_3) \), and \( O^\eta(u_3, u_1) \) would all be sung, violating Claim~\ref{cla:widebridge}.
	
	Suppose \( u_3 \) is in \( \eta(v_2 v_3) - \eta(v_2) \). So, the chord path $\eta(v_2 v_6)$ lies in the exterior of $O^\eta$, and \( u_4 \) is in \( \eta(v_2 v_3) - \eta(v_2) - \eta(v_3) \). Moreover, $u_1$ and $u_2$ are in \( \eta(v_1 v_2) - \eta(v_2) \) as \( O^\eta(u_2, u_3) \) is sung. Claim~\ref{cla:widebridge} implies that all attachments of \( B \) must be contained in \( O^\eta[\eta(v_1), \eta(v_3)] \). 
	
	Let \( w_1, w_2, w_3, w_4 \) be the attachments of \( B \) such that \( O^\eta[w_1, w_2] \) is a subpath of \( \eta(v_1 v_2) - \eta(v_2) \) containing all attachments of \( B \) in \( \eta(v_1 v_2) - \eta(v_2) \), and \( O^\eta[w_4, w_3] \) is a subpath of \( \eta(v_2 v_3) - \eta(v_2) \) containing all attachments of \( B \) in \( \eta(v_2 v_3) - \eta(v_2) \).
	
	We show that \( B - \eta(v_2) \) contains two disjoint paths, one joining \( w_1 \) to \( w_3 \) and the other joining \( w_2 \) to \( w_4 \). Suppose this is not the case. Then there exists a vertex \( x \) that separates \( \{w_1, w_2\} \) from \( \{w_3, w_4\} \) in \( B - \eta(v_2) \). Clearly, \( x \notin \{w_1, w_2, w_3, w_4\} \).  
	
	We observe that \( O^\eta(w_1, w_2) \) is not incident to any edge in the exterior of \( O^\eta \). If \( \eta(v_1 v_5) \) lies in the exterior and there exists an edge \( e \) joining \( O^\eta(w_1, w_2) \) to \( \eta(v_5 v_6) \), then either \( \eta(v_6) \) is not incident to \( e \), in which case there exists a subdivision where \( e \) replaces the edge \( \eta(v_1) \eta(v_5) \), resulting in the three sectors associated with \( w_1, w_2, \) and \( w_3 \) being sung, contradicting Claim~\ref{cla:widebridge}; or \( \eta(v_6) \) is incident to \( e \), in which case \( G \) contains a minor of the second graph given in Figure~\ref{fig:wide}. A similar argument applies when \( \eta(v_1 v_5) \) lies in the interior and an edge \( e \) joins \( O^\eta(w_1, w_2) \) to \( O^\eta[\eta(v_4), \eta(v_6)] \). The only situation we have not yet considered is when \( \eta(v_4) \) is incident to \( e \). However, in this case, \( G \) contains a minor of the first graph given in Figure~\ref{fig:wide3}, which is not possible.
	
Likewise, \( O^\eta(w_4, w_3) \) has no incident edges in the exterior of \( O^\eta \).

If both \( w_1 \) and \( w_2 \) are adjacent to \( x \), then by Lemma~\ref{lem:n3c}, they must be non-adjacent. Moreover, since \( O^\eta(w_1, w_2) \) does not connect to the exterior of \( O^\eta \), the set \( \{w_1, w_2, x\} \) would form a 3-cut, contradicting Lemma~\ref{lem:i4c}. Therefore, at least one of \( w_1 \) and \( w_2 \) is not adjacent to \( x \) but to some other vertex \( y \) in \( V(B) \setminus V(O^\eta) \). If \( \{w_1, w_2, x\} \) is a 3-cut, then it would separate \( V(O^\eta(w_1, w_2)) \cup \{y\} \) from \( \{w_3, w_4\} \), contradicting Lemma~\ref{lem:i4c}. This implies that \( \eta(v_2) \) is an attachment of \( B \).

	Applying Lemma~\ref{lem:i4c}, we conclude that \( \{w_1, \eta(v_2), x\} \) is not a 3-cut. 
	
	Consequently, \( O^\eta(w_1, \eta(v_2)) \) must be incident to some edge \( e_1 \) in the exterior of \( O^\eta \), and similarly, \( O^\eta(\eta(v_2), w_3) \) must be incident to some edge \( e_2 \) in the exterior of \( O^\eta \). Since \( G \) does not contain the first graph in Figure~\ref{fig:wide3} as a minor, we conclude that \( \eta(v_4) \) is not incident to \( e_1 \) and \( \eta(v_8) \) is not incident to \( e_2 \). Moreover, we can assume that at least one of \( e_1 \) and \( e_2 \) is not be incident to \( \eta(v_6) \), as \( \{w_1, w_3, \eta(v_6)\} \) is not a 3-cut. This allows us to construct a subdivision, using $e_1$ and $e_2$, in which the three sectors associated with \( w_1, \eta(v_2), \) and \( w_3 \) are sung, contradicting Claim~\ref{cla:widebridge}.  
	
Thus, we deduce that \( B - \eta(v_2) \) contains the desired paths, implying that \( G \) has the third graph in Figure~\ref{fig:wide} as a minor. However, this is impossible because that graph contains \( \mathfrak{Q}^+ \) as a minor. We conclude that \( u_3 \) cannot belong to \( \eta(v_2 v_3) - \eta(v_2) \).

Suppose \( u_3 \) belongs to \( \eta(v_3 v_4) - \eta(v_3) \). We have that \( u_4 \) is in \( \eta(v_3 v_4) - \eta(v_4) \). By Claim~\ref{cla:widebridge}, every attachment of \( B \) is contained in either \( \eta(v_1 v_2) \) or \( \eta(v_3 v_4) \).

Let \( w_1, w_2, w_3, w_4 \) be the attachments of \( B \) such that \( O^\eta[w_1, w_2] \) is a subpath of \( \eta(v_1 v_2) \) containing all attachments of \( B \) in \( \eta(v_1 v_2) \), and \( O^\eta[w_4, w_3] \) is a subpath of \( \eta(v_3 v_4) \) containing all attachments of \( B \) in \( \eta(v_3 v_4) \). By similar arguments as before, one can show that \( B \) contains two disjoint paths: one joining \( w_1 \) to \( w_3 \) and the other joining \( w_2 \) to \( w_4 \); we omit the details.  

If \( w_2 = \eta(v_2) \) and \( w_4 = \eta(v_3) \), then, by Lemma~\ref{lem:na4}, $\eta(v_2)$ and $\eta(v_3)$ are not adjacent, and hence there exists an edge joining \( \eta(v_2 v_3) - \eta(v_2) - \eta(v_3) \) and \( \eta(v_6 v_7) \). We can use that edge to modify the subdivision $\eta$, and hence it causes no loss of generality to assume \( w_2 \neq \eta(v_2) \) or \( w_4 \neq \eta(v_3) \). As a result, \( G \) contains the third graph given in Figure~\ref{fig:wide} as a minor, a contradiction. We conclude that \( u_3 \) does not belong to \( \eta(v_3 v_4) - \eta(v_3) \).

By symmetry, it remains to consider the case where \( u_3 \) and \( u_4 \) are in the path \( \eta(v_i v_{i+1}) \), where \( i \in \{4, 5\} \). By Claim~\ref{cla:widebridge}, every attachment of \( B \) is contained in either \( \eta(v_1 v_2) \) or \( \eta(v_i v_{i+1}) \).  

Let \( w_1, w_2, w_3, w_4 \) be the attachments of \( B \) such that \( O^\eta[w_1, w_2] \) is a subpath of \( \eta(v_1 v_2) \) containing all attachments of \( B \) in \( \eta(v_1 v_2) \), and \( O^\eta[w_4, w_3] \) is a subpath of \( \eta(v_i v_{i+1}) \) containing all attachments of \( B \) in \( \eta(v_i v_{i+1}) \).

As before, one can readily show that there is no edge joining \( O^\eta(w_1, w_2) \) or \( O^\eta(w_4, w_3) \) to the exterior of \( O^\eta \) (regardless of whether there is a chord path lying in the interior of \( O^\eta \)). This implies that the vertices in \( O^\eta[w_1, w_2] \) and \( O^\eta[w_4, w_3] \) are precisely the attachments of \( B \), and the union of \( B \), \( O^\eta[w_1, w_2] \), and \( O^\eta[w_4, w_3] \), denoted by \( \bar{B} \), is 2-connected. Let \( P_1 \) and \( P_2 \) be the disjoint paths in \( B \) such that \( P_1 \) has end-vertices \( w_1 \) and \( w_3 \), \( P_2 \) has end-vertices \( w_2 \) and \( w_4 \), and the union of \( P_1 \), \( P_2 \), \( O^\eta[w_1, w_2] \), and \( O^\eta[w_4, w_3] \) bounds \( \bar{B} \).  

Suppose \( B' := \bar{B} - V(P_1) - V(P_2) \) is non-empty. Then, as \( G \) is 3-connected, every component of \( B' \) must join both \( P_1 \) and \( P_2 \). There exists a path \( H_0 \) in $\bar{B}$, with end-vertices $y_1$ and $y_2$, satisfying \( y_1 \in V(P_1) \), \( y_2 \in V(P_2) \), \( |V(H_0)| \geq 3 \), and \( H_0 \) is internally disjoint from \( P_1 \) and \( P_2 \). We take such a path \( H_0 \) so that the (possibly empty) interior bounded by the union of \( O^\eta[w_1, w_2] \), \( P_1[w_1, y_1] \), \( P_2[w_2, y_2] \), and \( H_0 \) is inclusionwise minimal. Note that \( H_0 \) is \( O^\eta[w_1, w_2] \) when \( O^\eta[w_1, w_2] \) has some internal vertex. Moreover, there is no edge incident to an internal vertex of \( H_0 \) lying in the interior bounded by the union of \( O^\eta[w_1, w_2] \), \( P_1[w_1, y_1] \), \( P_2[w_2, y_2] \), and \( H_0 \). From this, one can easily show that \( H_0 \) is disjoint from \( O^\eta[w_4, w_3] \).  

Let \( H_1 \) be the subgraph obtained from the union of the facial cycles corresponding to the faces that are incident to \( H_0 \) and lie in the interior bounded by the union of \( O^\eta[w_4, w_3] \), \( P_1[w_3, y_1] \), \( P_2[w_4, y_2] \), and \( H_0 \), by removing \( V(H_0) \). By Lemma~\ref{lem:i4c}, we have that $H_1$ is a path. Moreover, \( H_1 \) has one end-vertex \( y_3 \) in \( P_1 \) and one end-vertex \( y_4 \) in \( P_2 \), and is internally disjoint from \( P_1 \) and \( P_2 \). Furthermore, $H_0 - y_1 - y_2$ and $H_1 - y_3 - y_4$ are in the same component of $B'$.

If \( H_1 \) is disjoint from \( O^\eta[w_4, w_3] \), we deduce a contradiction, namely that \( G \) contains a minor of \( \mathfrak{Q}^+ \), as follows. Adopt the vertex labeling of \( \mathfrak{Q}^+ \) as given in Figure~\ref{subfig:Q+}. Define \( \mu(q_1) := V(P_1[w_1, y_1]) \cup V(O^\eta[\eta(v_{i+3}), w_1]) \), \( \mu(q_2) := V(P_2[w_2, y_2]) \cup V(O^\eta[w_2, \eta(v_{i-2})]) \), \( \mu(q_3) := V(\eta(v_i v_{i+1})) \), \( \mu(q^1_1) := \{\eta(v_{i-1})\} \), \( \mu(q^1_2) := \{\eta(v_{i+2})\} \), \( \mu(q^2_1) := V(H_0) \setminus \{y_1, y_2\} \), \( \mu(q^2_2) := \{y_4\} \), \( \mu(q^2_3) := \{y_3\} \), and \( \mu(q^2) := V(H_1) \setminus \{y_3, y_4\} \). Clearly, \( \mu \) realizes a minor of \( \mathfrak{Q}^+ \). The configuration for \( i = 5 \) is illustrated in the fourth graph in Figure~\ref{fig:wide}.

Thus, we conclude that \( H_1 \) intersects \( O^\eta[w_4, w_3] \). It follows immediately that \( H_0 \) is not \( O^\eta[w_1, w_2] \), and hence \( w_1 \) and \( w_2 \) are adjacent in \( O^\eta \). By symmetry, we also have that \( w_3 \) and \( w_4 \) are adjacent in \( O^\eta \). (In the following, we will demonstrate how to find the two desired paths: one joining \( w_1 \) and \( w_2 \), and the other joining \( w_3 \) and \( w_4 \). These arguments will also be applicable when \( B \) contains a chord path, with slight modifications.)

Similarly, one can show that \( H_0 \) intersects \( O^\eta[w_1, w_2] \); otherwise, a minor of \( \mathfrak{Q}^+ \) would exist. By Lemma~\ref{lem:na4}, \( H_0 \) intersects \( O^\eta[w_1, w_2] \) at exactly one vertex. Without loss of generality, we assume that this intersection occurs at \( w_1 \). Thus, \( w_1 = y_1 \), \( w_2 \) is adjacent to \( y_2 \) in \( P_2 \), and, by Lemma~\ref{lem:n3c}, the union of the edges \( w_1 w_2 \) and \( w_2 y_2 \) together with the path \( H_0 \) bounds a face. As \( y_3 \) is an internal vertex of \( P_1 \), we have \( H_1 \) and \( O^\eta[w_4, w_3] \) intersect at \( w_4 \). This implies that \( y_2 \) is the only internal vertex of \( P_2 \), and symmetrically, \( y_3 \) is the only internal vertex of \( P_1 \). 

Note that \( w_4 \) has degree at least four. By Lemma~\ref{lem:na4}, \( y_3 \) is the only neighbor of \( w_3 \) that does not belong to \( O^\eta \).

Let \( C \) be the cycle consisting of the edges \( y_3 w_3 \) and \( w_3 w_4 \) along with the path \( H_1 \). Suppose there exists a vertex lying in the interior of \( C \). Let \( B_C \) be a bridge of \( C \) contained in the interior of $C$. The attachments of \( B_C \) must all lie on \( H_1 \). Let \( c_1 \) and \( c_2 \) be the attachments of \( B_C \) such that \(y_3, c_1, c_2, y_4 \) occur in \( H_1 \) in this order, and \( H_1[c_1, c_2] \) contains all the attachments of \( B_C \).

Suppose there exist distinct vertices \( c_3, c_4, c_5 \) other than $c_1, c_2$, such that \( c_1, c_3, c_4, c_5, c_2 \) occur in \( H_1 \) in this order, where \( c_4 \) is an attachment of \( B_C \), and each of \( c_3, c_5 \) is joined to \( H_0 \) by an edge. Define \( \mu(q_1) := V(H_0) \cup V(\eta(v_1 v_2)) \), \( \mu(q_2) := V(H_1[c_2, w_4]) \cup V(O^\eta[\eta(v_4), w_4]) \), \( \mu(q_3) := V(H_1[y_3, c_1]) \cup V(O^\eta[w_3, \eta(v_7)]) \), \( \mu(q^1_1) := \{\eta(v_3)\} \), \( \mu(q^1_2) := \{\eta(v_8)\} \), \( \mu(q^2_1) := \{c_5\} \), \( \mu(q^2_2) := V(B_C) \setminus V(H_1) \), \( \mu(q^2_3) := \{c_3\} \), and \( \mu(q^2) := \{c_4\} \). One can verify straightforwardly that \( \mu \) realizes a minor of \( \mathfrak{Q}^+ \) (in $G - \eta(v_2)\eta(v_6)$), which is impossible.

Thus, by Lemmas~\ref{lem:i4c} and~\ref{lem:n3c}, \( B_C \) has at least four attachments.

Suppose all vertices in \( H_1[c_1, c_2] \) are attachments. Then, by Lemma~\ref{lem:i4c}, there exist two vertices, say \( c_3 \) and \( c_5 \), in \( H_1(c_1, c_2) \), each of which is joined to \( H_0 \) by an edge. Furthermore, by Lemma~\ref{lem:na4}, there is a vertex \( c_4 \) in \( H_1(c_3, c_5) \). However, this configuration was previously shown to be impossible.  

Therefore, there exist vertices \( c_1', c_2' \) in \( H_1 \) such that \( c_1, c_1', c_2', c_2 \) appear in this order along \( H_1 \), where \( c_1' \) and \( c_2' \) are non-adjacent, and all vertices in \( H_1(c_1', c_2') \) are not attachments of $B_C$. In particular, at least one vertex in \( H_1(c_1', c_2') \) is joined to \( H_0 \) by an edge.  

If \( c_1 = c_1' \), then by Lemma~\ref{lem:i4c}, there must be a vertex in \( H_1(c_2', c_2) \) that is joined to \( H_0 \) by an edge, contradicting our previous observation. Thus, we must have \( c_1' \neq c_1 \) and \( c_2' \neq c_2 \). It is also straightforward to show that there exist disjoint paths \( U \) and \( U' \) in \( B_C \) such that \( U \) has end-vertices \( c_1 \) and \( c_2 \), while \( U' \) has end-vertices \( c_1' \) and \( c_2' \).  

Define \( \mu(q_1) := V(H_0) \cup V(\eta(v_1 v_2)) \cup V(H_1(c_1', c_2')) \), \( \mu(q_2) := V(H_1[c_2, w_4]) \cup V(O^\eta[\eta(v_4), w_4]) \), \( \mu(q_3) := V(H_1[y_3, c_1]) \cup V(O^\eta[w_3, \eta(v_7)]) \), \( \mu(q^1_1) := \{\eta(v_3)\} \), \( \mu(q^1_2) := \{\eta(v_8)\} \), \( \mu(q^2_1) := \{c_2'\} \), \( \mu(q^2_2) := V(U(c_1, c_2)) \), \( \mu(q^2_3) := \{c_1'\} \), and \( \mu(q^2) := V(U'(c_1', c_2')) \).  One can readily verify that \( \mu \) realizes a minor of \( \mathfrak{Q}^+ \) (in $G - \eta(v_2)\eta(v_6)$), which is a contradiction.  

Thus, we conclude that the interior of \( C \) contains no vertices. Consider the path consisting of \( H_0 \) together with the edge \( w_2 y_2 \), and the path consisting of \( H_1 \) together with the edge \( w_3 y_3 \). These two disjoint paths lie within \( \bar{B} \) and span \( \bar{B} \), making them the desired paths. This completes the case where \( B' \) is non-empty.  

If \( B' \) is empty, then letting \( w_1' \) and \( w_2' \) denote the respective neighbors of \( w_1 \) in \( P_1 \) and \( w_2 \) in \( P_2 \), it follows easily that \( w_1' \) and \( w_2' \) must be adjacent. Thus, the path consisting of the single edge \( w_1 w_2 \) and the path formed by \( P_1 - w_1 \), \( P_2 - w_2 \), and the edge \( w_1' w_2' \) constitute the desired paths.  

We have finished the proof for the case where $B$ does not contain any chord path.

Next, we consider the case where \( B \) contains a chord path, say \( \eta(v_2 v_6) \). By Claim~\ref{chordbridge}, every attachment is contained in either \( O^\eta[\eta(v_1), \eta(v_3)] \) or \( O^\eta[\eta(v_5), \eta(v_7)] \). Let \( w_1, w_2, w_4, w_3 \) be attachments of \( B \) occurring in \( O^\eta \) in this order, such that \( O^\eta[w_1, w_2] \) is a subpath of \( O^\eta[\eta(v_1), \eta(v_3)] \), \( O^\eta[w_4, w_3] \) is a subpath of \( O^\eta[\eta(v_5), \eta(v_7)] \), and every attachment of \( B \) is contained in either \( O^\eta[w_1, w_2] \) or \( O^\eta[w_4, w_3] \). It is easy to show that there is no edge joining \( O^\eta(w_1, w_2) \) or \( O^\eta(w_4, w_3) \) that lies in the exterior of \( O^\eta \). From this, it follows that \( w_1 \neq w_2 \) and \( w_4 \neq w_3 \).  

As before, denote by \( \bar{B} \) the union of \( O^\eta[w_1, w_2] \), \( O^\eta[w_4, w_3] \), and \( B \). Clearly, $\bar{B}$ is 2-connected. Let \( P_1 \) and \( P_2 \) be the disjoint paths in \( B \) such that \( P_1 \) has end-vertices \( w_1 \) and \( w_3 \), \( P_2 \) has end-vertices \( w_2 \) and \( w_4 \), and the union of \( P_1 \), \( P_2 \), \( O^\eta[w_1, w_2] \), and \( O^\eta[w_4, w_3] \) bounds \( \bar{B} \).  

Suppose \( w_1 \) and \( w_2 \) are not adjacent in \( O^\eta \).  

Let \( H_0 = O^\eta[w_1, w_2] \), and let \( H_1 \) be the path obtained from the union of the facial cycles corresponding to the faces that are incident to \( H_0 \) and lie in the interior bounded by the union of \( O^\eta[w_4, w_3] \), \( P_1[w_3, y_1] \), \( P_2[w_4, y_2] \), and \( H_0 \), by removing \( V(H_0) \). Then, \( H_1 \) has an end-vertex \( y_3 \) in \( P_1 \) and an end-vertex \( y_4 \) in \( P_2 \), and \( H_1 \) is internally disjoint from \( O^\eta[w_1, w_2] \), \( O^\eta[w_4, w_3] \), \( P_1 \), and \( P_2 \). Note that there is an edge joining \( H_0(w_1, w_2) \) and \( H_1(w_3, w_4) \).

If one of \( \eta(v_5) \) or \( \eta(v_7) \), say \( \eta(v_5) \), is not in \( O^\eta[w_4, w_3] \), then we define \( \mu(q_1) := V(O^\eta[\eta(v_1), w_1]) \), \( \mu(q_2) := V(O^\eta[w_2, \eta(v_4)]) \), \( \mu(q_3) := V(O^\eta[w_4, \eta(v_7)]) \), \( \mu(q^1_1) := \{\eta(v_5)\} \), \( \mu(q^1_2) := \{\eta(v_8)\} \), \( \mu(q^2_1) := V(H_0) \setminus \{w_1, w_2\} \), \( \mu(q^2_2) := \{y_4\} \), \( \mu(q^2_3) := \{y_3\} \), and \( \mu(q^2) := V(H_1) \setminus \{y_3, y_4\} \). This leads to a contradiction, as \( \mu \) realizes a minor of \( \mathfrak{Q}^+ \).  

If \( O^\eta[w_4, w_3] \) contains both \( \eta(v_5) \) and \( \eta(v_7) \), it follows that \( w_4 = \eta(v_5) \) and \( w_3 = \eta(v_7) \). Then there exists a path lying in the interior of the union of \( O^\eta[w_4, w_3] \), \( H_1 \), \( P_1[w_3, y_3] \), and \( P_2[w_4, y_4] \), with one end-vertex in \( O^\eta(w_4, w_3) \) and another end-vertex \( x \) in either \( H_1 \), \( P_1(w_3, y_3) \), or \( P_2(w_4, y_4) \). If \( x \) is not in \( H_1(y_3, y_4) \), we can obtain a minor of \( \mathfrak{Q}^+ \) similarly to the previous case. If \( x \) is in \( H_1(y_3, y_4) \), then \( G \) contains the fourth graph given in Figure~\ref{fig:wide3} and hence $K_{3,4}$ as a minor, which is a contradiction.  

We conclude that \( w_1 \) is adjacent to \( w_2 \) and \( w_4 \) is adjacent to \( w_3 \) in \( O^\eta \). Without loss of generality, \( w_1, w_2 \) lie in \( \eta(v_1 v_2) \) and \( w_4, w_3 \) in \( \eta(v_i v_{i+1}) \) for some \( i \in \{5, 6\} \). Now, we can proceed to find the two desired paths using the arguments we employed in the case where \( B \) does not contain a chord path; the details are left to the reader.

This completes the proof.  
\end{proof}

We are now ready to deduce the contradiction that \( G \) has a Hamilton cycle.

Let \( \mathcal{B} \) denote the set of bridges of \( O^\eta \). As established by Claim~\ref{cla:widebridge3}, each \( B \in \mathcal{B} \) has exactly four attachments \( w_1, w_2, w_3, w_4 \) such that \( w_1w_2, w_3w_4 \in E(O^\eta) \), and there exist disjoint paths \( U_1, U_2 \) within the union of \( B \) and the edges \( w_1w_2 \), \( w_3w_4 \), where \( U_1 \) connects \( w_1 \) and \( w_2 \), \( U_2 \) connects \( w_3 \) and \( w_4 \), and together they span the union of \( B \) and the edges $w_1 w_2, w_3 w_4$. We call \( w_1w_2, w_3w_4 \) the \emph{hinges} and \( U_1, U_2 \) the \emph{wings} of $B$.

By Claim~\ref{cla:emptyext}, no two distinct bridges share a common hinge. Thus, by removing the hinges and attaching the wings for all bridges in \( \mathcal{B} \) to the cycle \( O^\eta \), we obtain a Hamilton cycle of $G$. This leads to a contradiction and completes the proof for the projective-planar case of Ding and Marshall's conjecture.

\section{Spanning subdivisions in graphs without specific minors}\label{sec:DEEF}

In this section, as part of our preparation for the main proofs, we investigate the structure of graphs that exclude certain minors. We establish the existence of specific spanning subdivisions, which we later use to derive a contradiction concerning the existence of a Hamilton cycle.

Although we intend to analyze counterexamples, we do not use their non-hamiltonicity in this section; such arguments are deferred to later sections.

We consider the four graphs \( D_{17} \), \( E_{20} \), \( E_{22} \), and \( F_4 \), shown in Figure~\ref{fig:DEEF}, with the vertex labelings as depicted. These graphs are internally $4$-connected and play a central role in this section.

\begin{figure}[!ht]
	\centering
	\begin{tabular}{cccc}
		\subfloat[The graph $D_{17}$.\label{subfig:D17}]{
			\includegraphics[scale=1]{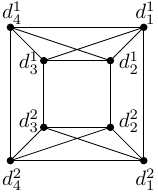}
		} & 
		\subfloat[The graph $E_{20}$.\label{subfig:E20}]{
			\includegraphics[scale=1]{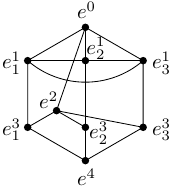}
		} & 
		\subfloat[The graph $E_{22}$.\label{subfig:E22}]{
			\includegraphics[scale=1]{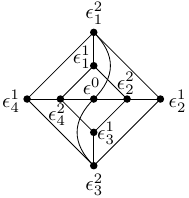}
		} & 
		\subfloat[The graph $F_4$.\label{subfig:F4}]{
			\includegraphics[scale=1]{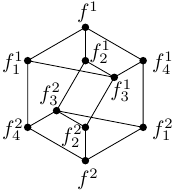}
		} 
	\end{tabular}
	\caption{Four internally 4-connected non-projective-planar graphs.}
	\label{fig:DEEF}
\end{figure}

The symmetries of these graphs help reduce the case analysis. 

The graph \( D_{17} \) has 48 automorphisms, each either fixing \( \{d^1_1, d^1_2, d^1_3, d^1_4\} \) or mapping it to \( \{d^2_1, d^2_2, d^2_3, d^2_4\} \). Those that fix \( \{d^1_1, d^1_2, d^1_3, d^1_4\} \) correspond one-to-one with the permutations of these four vertices. 

The graph \( E_{20} \) has 6 automorphisms, corresponding one-to-one with the permutations of \( \{e^1_1, e^1_2, e^1_3\} \). 

The graph \( E_{22} \) has 24 automorphisms, corresponding one-to-one with the permutations of \( \{\epsilon^1_1, \epsilon^1_2, \epsilon^1_3, \epsilon^1_4\} \). 

The graph \( F_4 \) has 4 automorphisms, each mapping \( f^1_1 \) to one of \( f^1_1 \), \( f^1_4 \), \( f^2_1 \), or \( f^2_4 \).

We first prove a proposition concerning graphs that contain a $D_{17}$ minor.

\begin{proposition}\label{pro:D17toE20}
	Let $G$ be a $3$-connected graph containing a minor of $D_{17}$ with $|V(G)| > |V(D_{17})|$. Then $G$ has a minor of $K_{3,4}$ or $E_{20}$.
\end{proposition}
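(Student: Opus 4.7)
The plan is to apply Seymour's splitter theorem (Theorem~\ref{thm:Seymour}) to reduce the proposition to a finite case check on one-vertex splits of $D_{17}$. Since $D_{17}$ is 3-connected and not a wheel, and $G$ is a 3-connected graph with $|V(G)|>|V(D_{17})|$ containing $D_{17}$ as a minor, Theorem~\ref{thm:Seymour} produces (a graph isomorphic to) $G$ from $D_{17}$ by a sequence of edge additions and vertex splittings. Since the vertex count is invariant under edge addition but strictly increases under a split, this sequence must contain at least one split. Let $H$ be the graph obtained immediately after the first split; then $G$ contains $H$ as a minor, and $H$ arises from $D_{17}$ by adding some (possibly empty) set of edges and then splitting a single vertex. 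It therefore suffices to prove that every such $H$ has a $K_{3,4}$- or $E_{20}$-minor.

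Next I would enumerate the candidates for $H$ using the symmetry of $D_{17}$. Writing $v$ for the split vertex of the intermediate graph $D_{17}^+$ (obtained by adding the edges prior to the split), and $v_1,v_2$ for the two vertices produced, let $A:=N_H(v_1)\setminus\{v_2\}$ and $B:=N_H(v_2)\setminus\{v_1\}$, so that $A\cup B$ is the neighborhood of $v$ in $D_{17}^+$ and $|A|,|B|\geq 2$. Any added edge not incident to $v$ only enlarges $H$ and can only help in finding a minor, so we may restrict attention to edges added between $v$ and new neighbors. The $48$ automorphisms of $D_{17}$ act transitively on $\{d^1_1,d^1_2,d^1_3,d^1_4\}$ (and on the $d^2_j$) and swap these two sets, so the orbits on $V(D_{17})$ fall into a small number of classes; for each orbit representative $v$, one considers the splits of $N_{D_{17}^+}(v)$ up to the action of the stabilizer of $v$.

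For each representative case I would exhibit an explicit $K_{3,4}$- or $E_{20}$-minor of $H$ by writing down branch sets and connecting paths, guided by the drawing of $D_{17}$ and $E_{20}$ in Figure~\ref{fig:DEEF}. Roughly, when the split or the added edges introduce new connections bridging distant parts of $D_{17}$ (e.g.\ between the $d^1_i$-side and the $d^2_j$-side), one expects a $K_{3,4}$-minor; when the split is ``internal'' and respects the symmetric bipartition-like structure of $D_{17}$, one expects an $E_{20}$-minor, since $E_{20}$ is the natural one-vertex extension of $D_{17}$ in that direction. The verifications themselves---checking that each branch set induces a connected subgraph and the connecting paths are internally disjoint---are mechanical.

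The main obstacle is the completeness of the case analysis rather than any single conceptually hard step. Even after reducing by the automorphism group, one must treat each orbit of $V(D_{17})$ separately and, for each, run through every inequivalent partition $(A,B)$ of $N_{D_{17}^+}(v)$ with $|A|,|B|\geq 2$, for every admissible choice of added neighbors of $v$. Organizing these cases compactly, and selecting the right target minor ($K_{3,4}$ versus $E_{20}$) in each, is the principal bookkeeping challenge; the earlier tools on JT-subdivisions (Lemmas~\ref{lem:JTstable}, \ref{lem:JTedge}) are not needed here because $G$ is only assumed to be 3-connected, and because Seymour's theorem already furnishes the extra structure required.
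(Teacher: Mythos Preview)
Your proposal is correct and follows essentially the same approach as the paper: reduce via Seymour's splitter theorem to a single vertex split of $D_{17}$ (after possibly adding edges incident to the split vertex), then use the automorphism group to cut down the case analysis and exhibit a $K_{3,4}$- or $E_{20}$-minor in each case. One remark that will shorten your work considerably: $D_{17}$ is vertex-transitive (it is two copies of $K_4$ joined by a perfect matching, with automorphism group $S_4\times\mathbb{Z}_2$), so there is only a single orbit of vertices, and the paper's entire case analysis reduces to five subcases arising from splitting $d^1_1$.
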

\begin{proof}
By Theorem~\ref{thm:Seymour}, there exists a graph \( H \) such that \( G \) contains \( H \) as a minor, and \( H \) is obtained from \( D_{17} \) by adding some edges (possibly none) and then splitting one vertex.

Without loss of generality, suppose the vertex \( d^1_1 \) is split into \( v_1 \) and \( v_2 \) to obtain \( H \).

If, say, \( v_1 \) has neighbors \( d^1_2, d^1_3, d^1_4 \), then \( v_2 \) must have two neighbors from \( \{d^2_1, d^2_2, d^2_3, d^2_4\} \). Depending on whether \( d^2_1 \) is a neighbor of \( v_1 \), \( H \) contains the first or second graph in Figure~\ref{fig:D17toE20} as a subgraph. Thus, \( G \) contains \( K_{3,4} \) or \( E_{20} \) as a minor.

If, say, \( v_1 \) has neighbors \( d^1_2, d^1_3 \) and \( v_2 \) has \( d^1_4 \) as a neighbor, then \( H \) contains the third, fourth, or fifth graph in Figure~\ref{fig:D17toE20} as a subgraph. Consequently, \( G \) contains \( E_{20} \) or \( K_{3,4} \) as a minor.
\end{proof}

\begin{figure}[!ht]
	\centering{%
		\includegraphics[scale=1]{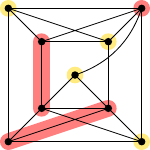}
	}
	\hspace{10pt}
	{%
		\includegraphics[scale=1]{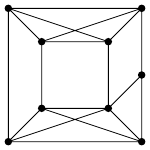}
	}
	\hspace{10pt}
	{%
		\includegraphics[scale=1]{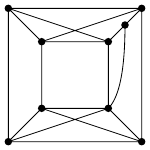}
	}
	\hspace{10pt}
	{%
		\includegraphics[scale=1]{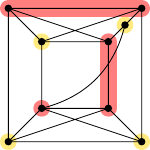}
	}
	\hspace{10pt}
	{%
		\includegraphics[scale=1]{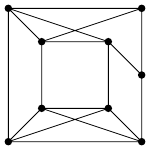}
	}
	\caption{The first and fourth graphs contains \( K_{3,4} \) as a minor. The other three graphs contain \( E_{20} \) as a subgraph. (In fact, the first graph can be obtained from the fourth graph by adding an edge.)}
	\label{fig:D17toE20}
\end{figure}

Next, we discuss graphs containing \( E_{20} \) as a minor and prove two propositions, one for Ding and Marshall's conjecture and the other for their question.

\begin{lemma} \label{lem:E20sub}
	Let $H$ be a graph obtained from \( E_{20} \) by splitting a vertex. Then $H$ contains a minor of $K_{3,4}$ or $F_4$.
\end{lemma}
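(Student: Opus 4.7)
The plan is a case analysis on which vertex of $E_{20}$ is split and on how its neighborhood is partitioned, in the same spirit as the proof of Proposition~\ref{pro:D17toE20}. I will exploit the fact that the automorphism group of $E_{20}$ corresponds bijectively to the permutations of $\{e^1_1, e^1_2, e^1_3\}$: this symmetry partitions $V(E_{20})$ into a small number of orbits, so I only need to consider one representative $v$ from each orbit. Within each case, I further use the stabilizer of $v$ in the automorphism group to cut down the inequivalent splittings of $N(v)$ into two parts each of size at least two. Since splittable vertices must have degree at least four, the number of inequivalent bipartitions of $N(v)$ modulo this stabilizer is small.

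For each inequivalent splitting, yielding a graph $H$ (into which I may also insert any edges permitted by the splitting operation), I will exhibit explicit branch sets in $H$ realizing either a $K_{3,4}$ minor or an $F_4$ minor. The verification in each case is mechanical once the adjacencies of $E_{20}$ are recalled: one points to four or five pairwise-disjoint connected subsets of $V(H)$, together with the edges or short paths linking them, and checks that the required adjacency pattern of $K_{3,4}$ or $F_4$ is present. I expect to present the resulting collection of embeddings in a single figure catalogue analogous to Figure~\ref{fig:D17toE20}, so that the proof in the text largely amounts to saying ``split $v$ as shown; by inspection of the figure, the resulting graph contains the indicated minor.''

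The main obstacle is organizational rather than mathematical, namely ensuring that the enumeration of orbits of vertices in $E_{20}$ and of splittings of $N(v)$ modulo the stabilizer of $v$ is complete and non-redundant. The most delicate cases should be those in which $v$ has degree exactly four, since then each of $v_1, v_2$ has exactly two old neighbors and I have the least local freedom when constructing branch sets; in these cases I anticipate needing to use large portions of $E_{20}$, possibly contracting paths that run through distant vertices, rather than exploiting purely local structure. Conversely, higher-degree splittings should produce either $K_{3,4}$ directly (as one of the two sides already resembles a $K_{1,3}$-like configuration) or $F_4$ via a local substitution visible in the figure.
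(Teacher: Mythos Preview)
Your proposal is correct and follows essentially the same approach as the paper: a symmetry-reduced case analysis on the vertex being split, with each resulting graph shown (via an explicit figure) to contain a $K_{3,4}$ or $F_4$ minor. Note that every splittable vertex of $E_{20}$ has degree exactly four, so your anticipated ``higher-degree'' cases do not arise; the enumeration collapses to precisely four inequivalent splittings, matching the four graphs the paper exhibits.
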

\begin{proof}
	Up to symmetry, the graphs obtained from \( E_{20} \) by splitting a vertex are the first four graphs shown in Figure~\ref{fig:E20sub}. Each of these graphs contains a minor of $F_4$ or \( K_{3,4} \).
\end{proof}

\begin{figure}[!ht]
	\centering{%
		\includegraphics[scale=1]{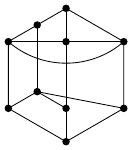}
	}
	\hfill
	{%
		\includegraphics[scale=1]{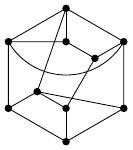}
	}
	\hfill
	{%
		\includegraphics[scale=1]{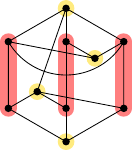}
	}
	\hfill
	{%
		\includegraphics[scale=1]{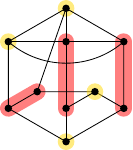}
	}
	\hfill
	{%
		\includegraphics[scale=1]{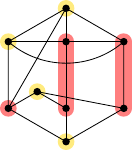}
	}
	\hfill
	{%
		\includegraphics[scale=1]{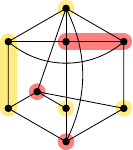}
	}
	\caption{The first two graphs contain $F_4$ as a subgraph. The other graphs contain $K_{3,4}$ as a minor.}
	\label{fig:E20sub}
\end{figure}

\begin{proposition}\label{pro:E20toF4DM}
	Let \( G \) be a \( 3 \)-connected graph. Suppose \( G \) contains a minor of \( E_{20} \) and has no two adjacent vertices each with degree at least \( 4 \). Then \( G \) contains a minor of \( K_{3,4} \), \( \mathfrak{Q}^+ \), or \( F_4 \).
\end{proposition}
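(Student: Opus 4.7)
The plan is by contradiction. Assume that $G$ has no minor of $K_{3,4}$, $\mathfrak{Q}^+$, or $F_4$. By Lemma~\ref{lem:E20sub}, every graph obtained from $E_{20}$ by splitting a vertex already contains a minor of $K_{3,4}$ or $F_4$; hence $G$ contains no such split as a minor. By the standard fact mentioned just before Lemma~\ref{lem:JTstable} (that a $3$-connected graph containing $H$ as a minor but no split of $H$ as a minor contains a subdivision of $H$), $G$ then contains a subdivision of $E_{20}$.

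Next, I would apply Seymour's splitter theorem (Theorem~\ref{thm:Seymour}) to express $G$ as the result of a sequence of edge-additions and vertex-splittings starting from $E_{20}$ (noting that $E_{20}$ is not a wheel, since $|\mathrm{Aut}(E_{20})|=6$ while a wheel on a comparable number of vertices has a much larger automorphism group). The key step is to show that this sequence contains no vertex-splittings, so that $V(G) = V(E_{20})$ and $E(G) = E(E_{20}) \cup F$ for some set $F$ of added edges. Indeed, if the first splitting in the sequence were applied to $E_{20}$ itself (i.e., with no prior edge-additions), it would directly produce a split of $E_{20}$ as a minor of $G$, contradicting the previous paragraph; and if it were applied to an intermediate graph $E_{20} \cup F'$ with $F' \neq \emptyset$, a more delicate case analysis—tracking the distribution of the added edges around the split vertex—would still exhibit either a split of $E_{20}$ or a target minor in $G$.

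Once $G = E_{20} \cup F$ is established, the degree hypothesis takes over: every vertex of the $3$-connected graph $E_{20}$ has degree at least three, so any edge $uv \in F$ would make $u$ and $v$ adjacent and both of degree at least four in $G$, contradicting the hypothesis. Hence $F = \emptyset$ and $G = E_{20}$. Finally, I would verify by direct inspection of Figure~\ref{subfig:E20} that $E_{20}$ itself contains a minor of $K_{3,4}$, $\mathfrak{Q}^+$, or $F_4$—most plausibly a $\mathfrak{Q}^+$-minor, exhibited via an explicit minor mapping that exploits the $S_3$-symmetry of $E_{20}$. This contradicts the initial assumption and completes the proof.

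The main obstacle is the ``nontrivial splitting'' subcase outlined above, in which the first vertex-splitting in the Seymour sequence is applied to $E_{20} \cup F'$ with $F' \neq \emptyset$. Because one part of the resulting split may inherit only a single original $E_{20}$-neighbor of the split vertex, the split does not automatically project to a split of $E_{20}$, and a careful finite analysis—keyed to the specific structure of $E_{20}$ shown in Figure~\ref{subfig:E20}—is required to still force the desired minor of $G$. The remaining steps (the degree argument ruling out added edges, and the final inspection on $E_{20}$) are relatively routine.
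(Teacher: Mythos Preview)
There are two concrete problems.

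First, your endgame fails: $E_{20}$ has \emph{no} minor of $K_{3,4}$, $\mathfrak{Q}^+$, or $F_4$. The graph $F_4$ has ten vertices while $E_{20}$ has nine, so $F_4$ is out; and the paper records (Section~\ref{sec:DMC}) that among the 23 Archdeacon graphs, precisely $D_{17}$, $E_{20}$, and $F_4$ lack a $K_{3,4}$ or $\mathfrak{Q}^+$ minor. So your ``direct inspection'' in step~8 would find nothing. You could patch this particular step by noting instead that $E_{20}$ already contains two adjacent degree-$4$ vertices (for instance $e^1_1$ and $e^1_3$), so $G=E_{20}$ directly contradicts the degree hypothesis---no minor check is needed.

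Second, and more seriously, your step~4 is the entire difficulty and you have not done it. When the first split in the Seymour sequence is applied to $E_{20}\cup F'$ with $F'\neq\emptyset$, one side of the split may inherit only one original $E_{20}$-neighbor, so there is no automatic reduction to a split of $E_{20}$; working this out would require a case analysis over the possible added edges and split vertices that is at least as heavy as the proposition itself. The paper bypasses this entirely. Having obtained a subdivision $\eta(E_{20})$ (which you also obtain in your first paragraph), it observes that the branch vertices $\eta(e^1_1)$ and $\eta(e^1_3)$ each have degree at least four in $G$; the degree hypothesis then forces the segment $\eta(e^1_1e^1_3)$ to have an internal vertex, and $3$-connectedness produces a path from that internal vertex out of the segment. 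A short case analysis on where this path lands yields one of the six configurations in Figure~\ref{fig:E20toF4e}, each containing a minor of $K_{3,4}$, $\mathfrak{Q}^+$, or $F_4$. This is the key idea your plan is missing: apply the degree hypothesis directly to the subdivision, rather than trying to control a Seymour sequence.
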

\begin{proof}
	Suppose, for contradiction, that \( G \) has no minor of \( K_{3,4} \), \( \mathfrak{Q}^+ \), or \( F_4 \).
	
	By Lemma~\ref{lem:E20sub}, \( G \) contains a subdivision of \( E_{20} \). Let \( \eta \) denote such a subdivision, that is, \( \eta \) is a mapping on \( V(E_{20}) \cup E(E_{20}) \) such that \( \eta \) maps the vertices of \( E_{20} \) to distinct vertices of \( G \) and maps the edges of \( E_{20} \) to internally disjoint paths in \( G \), satisfying the condition that for any \( uv \in E(E_{20}) \), the path \( \eta(uv) \) has end-vertices \( \eta(u) \) and \( \eta(v) \) and contains no other vertices from \( \eta(V(E_{20})) \).
	
	Since \( G \) is 3-connected and has no adjacent vertices each with degree at least 4, the path \( \eta(e^1_1 e^1_3) \) must have an internal vertex that connects to a vertex outside \( \eta(e^1_1 e^1_3) \) via a path. This implies that \( G \) contains one of the graphs shown in Figure~\ref{fig:E20toF4e} as a minor. These graphs contain \( F_4 \), \( K_{3,4} \), or \( \mathfrak{Q}^+ \) as minors, leading to a contradiction.
\end{proof}

\begin{figure}[!ht]
	\centering{%
		\includegraphics[scale=1]{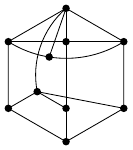}
	}
	\hfill
	{%
		\includegraphics[scale=1]{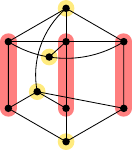}
	}
	\hfill
	{%
		\includegraphics[scale=1]{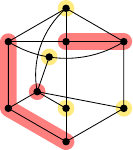}
	}
	\hfill
	{%
		\includegraphics[scale=1]{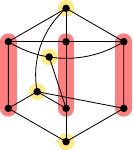}
	}
	\hfill
	{%
		\includegraphics[scale=1]{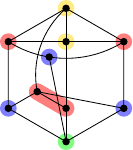}
	}
	\hfill
	{%
		\includegraphics[scale=1]{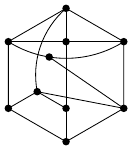}
	}
	\caption{The first and sixth graphs contain \( F_4 \) as a subgraph, the fifth graph contains a minor of \( \mathfrak{Q}^+ \), and the remaining graphs contain \( K_{3,4} \) as a minor.}
	\label{fig:E20toF4e}
\end{figure}

\begin{lemma} \label{lem:E20+I}
	Let \( H \) be a graph obtained from \( E_{20} \) by adding an edge joining one of the following pairs: \( \{e^0, e^3_1\} \), \( \{e^0, e^3_2\} \), \( \{e^0, e^3_3\} \), \( \{e^0, e^4\} \). Then \( H \) contains \( K_{3,4} \) as a minor.
\end{lemma}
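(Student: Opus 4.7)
The plan is to first exploit the automorphism group of $E_{20}$, which the paper has noted consists of the six permutations of $\{e^1_1,e^1_2,e^1_3\}$; the labeling scheme strongly suggests that these automorphisms correspondingly permute $\{e^3_1,e^3_2,e^3_3\}$, so the three cases $\{e^0,e^3_1\}$, $\{e^0,e^3_2\}$, $\{e^0,e^3_3\}$ collapse to a single case under symmetry. Only two essentially distinct graphs then remain to be analyzed: $H_1 := E_{20}+e^0e^3_1$ and $H_2 := E_{20}+e^0e^4$.

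For each of $H_1$ and $H_2$ I would exhibit a $K_{3,4}$ minor directly, by providing an explicit branch-set assignment $\mu : V(K_{3,4}) \to 2^{V(H_i)}$, in the same spirit as the constructions used throughout Section~\ref{sec:projective-planar} (for example in the proof of Claim~\ref{cla:widebridge3}) and as in the preceding proof of Lemma~\ref{lem:E20sub}. Using the labeling from Figure~\ref{subfig:K34}, the vertex $e^0$ and the second endpoint of the new edge would be placed into two of the branch sets on the size-$3$ side of $K_{3,4}$, so that the newly added edge directly realizes one of the twelve bipartite edges of $K_{3,4}$ that is not already present in $E_{20}$; the remaining branch sets would be singletons or short contracted subpaths of $E_{20}$, chosen so that each of the four vertices on the size-$4$ side reaches each of the three size-$3$ branch sets by an internally disjoint path.

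Since the preceding lemma and proposition in this section (Lemma~\ref{lem:E20sub} and Proposition~\ref{pro:E20toF4DM}) are each dispatched by a single figure, I anticipate that this lemma will be proved analogously: two small drawings of the relevant minors in $H_1$ and $H_2$, accompanied by a one-sentence remark asserting that each of the four listed edges yields a graph containing $K_{3,4}$ as a minor.

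The main obstacle is purely combinatorial bookkeeping rather than any deeper structural argument: the only nontrivial step is verifying, case by case, that the chosen four vertices on the $4$-side of $K_{3,4}$ indeed reach the three branch sets on the $3$-side by pairwise internally disjoint paths, which, given the bounded size of $E_{20}$, reduces to a short finite check once the branch-set assignment is drawn.
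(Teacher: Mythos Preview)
Your proposal is correct and matches the paper's approach: reduce by the automorphism group of $E_{20}$ to the two graphs $E_{20}+e^0e^3_1$ and $E_{20}+e^0e^4$, then verify a $K_{3,4}$ minor in each by direct inspection (the paper does this by pointing to the fifth and sixth graphs in Figure~\ref{fig:E20sub}).
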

\begin{proof}
	Up to symmetry, \( H \) is either the fifth or sixth graph from Figure~\ref{fig:E20sub}. In each case, it is not hard to see that \( H \) contains \( K_{3,4} \) as a minor.
\end{proof}

\begin{lemma} \label{lem:E20+Y}
	Let \( H \) be a graph obtained from \( E_{20} \) by adding a new vertex \( v \) and joining it to three vertices. Let \( S \) be the set of neighbors of \( v \). If every pair of vertices from \( S \) are adjacent or every pair of vertices from $S$ are non-adjacent, then $H$ contains \( K_{3,4} \) or $F_4$ as a minor.
\end{lemma}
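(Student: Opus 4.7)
The plan is to proceed by a case analysis of the possible triples $S$, using the $6$-fold symmetry of $E_{20}$ (which acts by permuting $\{e^1_1, e^1_2, e^1_3\}$) to cut down the number of representatives. The statement splits naturally into two cases: either $S$ induces a triangle of $E_{20}$, or $S$ is an independent set in $E_{20}$. In each case, I would adjoin $v$ joined to $S$ and then exhibit an explicit $K_{3,4}$ or $F_4$ minor in $H$, in the same style as the proofs of Lemmas~\ref{lem:E20sub} and~\ref{lem:E20+I}, presenting the minors in a figure rather than in long prose.

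For the triangle case, I would first enumerate the orbits of triangles of $E_{20}$ under $\mathrm{Aut}(E_{20})$. Since $E_{20}$ is small and has a symmetric drawing (Figure~\ref{subfig:E20}), a short inspection gives only a handful of orbits. For each representative triangle $T$, adding the new vertex $v$ joined to all three vertices of $T$ produces a $K_4$ sitting on top of $E_{20}$; contracting one edge of $T$ (thereby identifying $v$'s neighbourhood partially with $E_{20}$) should yield in each case a graph visibly containing a subdivision of $F_4$, or alternatively a $K_{3,4}$ minor when the triangle is incident to a vertex of sufficiently high degree in $E_{20}$.

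For the independent-triple case, I would enumerate the orbits of independent $3$-subsets of $V(E_{20})$ under $\mathrm{Aut}(E_{20})$. For each representative $S$, the new vertex $v$ is a natural candidate for a degree-$3$ branch vertex of $K_{3,4}$: using the three edges from $v$ to $S$ as three of the six branch paths, I would pick three disjoint connected branch sets of ``high degree'' inside $E_{20}\setminus S$ that each send an edge to every vertex of $S$, and read off a $K_{3,4}$ minor directly from the picture. In the few residual configurations where no such partition exists, I would instead display an $F_4$ minor by an analogous contraction.

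The main obstacle is the enumeration itself: one must correctly list the orbits of triangles and of independent triples under $\mathrm{Aut}(E_{20})$, and in each resulting case verify that the claimed minor is actually present. This is a finite check very much in the spirit of Lemmas~\ref{lem:E20sub} and~\ref{lem:E20+I}, and I expect the final proof to consist of a brief enumeration together with a figure exhibiting the minors in each residual case, with the verification of each subcase being essentially a routine reading of the figure.
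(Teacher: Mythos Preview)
Your plan is correct and matches the paper's approach: a case analysis over the orbits of $S$ under the $6$-fold symmetry of $E_{20}$, split into the triangle case and the independent-set case. Two shortcuts in the paper make the enumeration shorter than you anticipate. For the triangle case, observe that if $w\in S$ has degree~$4$ in $E_{20}$, then deleting the two edges of $T$ through $w$ turns $H$ into precisely a vertex-split of $E_{20}$ at $w$, so Lemma~\ref{lem:E20sub} applies directly and no new figure is needed (there are only two orbits of triangles). For the independent case, first invoke Lemma~\ref{lem:E20+I}: if $S$ contains any of the pairs listed there, contracting one edge from $v$ already produces a $K_{3,4}$ minor, which prunes the enumeration down to just three orbits ($\{e^1_1,e^2,e^4\}$, $\{e^1_1,e^3_2,e^3_3\}$, $\{e^3_1,e^3_2,e^3_3\}$), each handled by a single picture.
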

\begin{proof}
	Up to symmetry, there are two possibilities that every pair of vertices from $S$ are adjacent: $S$ is $\{e^0, e^1_1, e^1_2\}$ or $\{e^1_1, e^1_2, e^1_3\}$. In either case, $H$ contains the first or third graph in Figure~\ref{fig:E20sub} as a subgraph. Thus, $H$ contains $F_4$ or $K_{3,4}$ as a minor.
	
	By Lemma~\ref{lem:E20+I}, if $S$ contains $e^0$, then it contains none of $e^3_1, e^3_2, e^3_3, e^4$. Therefore, up to symmetry, if every pair of vertices from $S$ are non-adjacent, then $S$ is one of the following sets: $\{e^1_1, e^2, e^4\}$, $\{e^1_1, e^3_2, e^3_3\}$, $\{e^3_1, e^3_2, e^3_3\}$. They are depicted as the first three graphs in Figure~\ref{fig:E20+Y}. Each of them contains a \( K_{3,4} \) minor.
\end{proof}

\begin{figure}[!ht]
	\centering{%
		\includegraphics[scale=1]{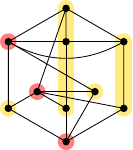}
	}
	\hspace{10pt}
	{%
		\includegraphics[scale=1]{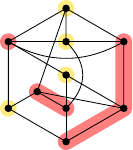}
	}
	\hspace{10pt}
	{%
		\includegraphics[scale=1]{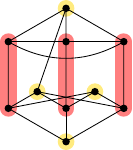}
	}
	\hspace{10pt}
	{%
		\includegraphics[scale=1]{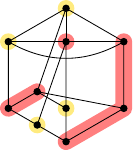}
	}
	\hspace{10pt}
	{%
		\includegraphics[scale=1]{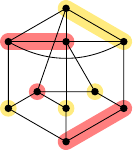}
	}
	\caption{Five graphs, each containing \( K_{3,4} \) as a minor.}
	\label{fig:E20+Y}
\end{figure}

\begin{lemma} \label{lem:E20+X}
	Let \( H \) be a graph obtained from \( E_{20} \) by adding a new vertex and joining it to four vertices of \( E_{20} \). Then \( H \) contains \( K_{3,4} \) or $F_4$ as a minor.
\end{lemma}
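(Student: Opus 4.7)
The plan is to extend the strategy used in Lemma~\ref{lem:E20+Y}. Let \( v \) denote the new vertex and \( S = \{u_1, u_2, u_3, u_4\} \subseteq V(E_{20}) \) its four neighbors, so that \( H \) is obtained from \( E_{20} \) by adding \( v \) together with the edges \( v u_1, v u_2, v u_3, v u_4 \).

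The key reduction is the following: if some three-element subset \( S' \subset S \) is either a triangle or an independent set in \( E_{20} \), then the subgraph of \( H \) obtained by deleting the unique edge from \( v \) to the vertex in \( S \setminus S' \) fits exactly the hypothesis of Lemma~\ref{lem:E20+Y}, and therefore already contains a \( K_{3,4} \) or \( F_4 \) minor; hence so does \( H \). It thus suffices to prove the statement under the assumption that no three-element subset of \( S \) induces a triangle or an independent set in \( E_{20} \). Equivalently, the induced subgraph \( E_{20}[S] \) has clique number and independence number both equal to two, which, by a routine enumeration of 4-vertex graphs, forces \( E_{20}[S] \) to be one of \( C_4 \), \( P_4 \), or \( 2K_2 \).

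Next, using the adjacencies of \( E_{20} \) that are visible from Figure~\ref{subfig:E20} and consistent with Lemmas~\ref{lem:E20+I} and~\ref{lem:E20+Y} (for example, \( \{e^1_1, e^1_2, e^1_3\} \) is a triangle, \( \{e^3_1, e^3_2, e^3_3\} \) is an independent set, and \( e^0 \) is adjacent to each \( e^1_i \) but to none of the \( e^3_j \) or \( e^4 \)), I would enumerate the 4-subsets \( S \) for which \( E_{20}[S] \cong C_4, P_4 \), or \( 2K_2 \), up to the action of the symmetry group of \( E_{20} \). Since that group acts as the permutation group of \( \{e^1_1, e^1_2, e^1_3\} \) (together with the corresponding permutation of \( \{e^3_1, e^3_2, e^3_3\} \)) while fixing \( e^0, e^2, e^4 \), only a short list of orbit representatives remains. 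For each representative I would display the resulting graph \( H \) in a figure analogous to Figure~\ref{fig:E20+Y} and exhibit an explicit minor map \( \mu \) realizing \( K_{3,4} \) or \( F_4 \).

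The principal difficulty I anticipate is purely the bookkeeping: carefully enumerating the orbits of 4-subsets whose induced subgraph is \( C_4 \), \( P_4 \), or \( 2K_2 \), and presenting a convincing minor for each. The content of the argument itself is a routine inspection well suited to a figure-based presentation, paralleling the style of Lemmas~\ref{lem:E20+I} and~\ref{lem:E20+Y}.
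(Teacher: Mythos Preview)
Your proposal is correct and follows essentially the same route as the paper's proof: the paper likewise invokes Lemmas~\ref{lem:E20+I} and~\ref{lem:E20+Y} to prune $S$, then lists eight orbit representatives (all of which indeed induce $P_4$, $2K_2$, or $C_4$, matching your characterization) and points each to an explicit subgraph from Figures~\ref{fig:E20sub}, \ref{fig:E20+Y}, or~\ref{fig:E20toF4e}. Your framing via $\omega(E_{20}[S])=\alpha(E_{20}[S])=2$ is a tidy way to organise the residual cases, but the substance and the remaining bookkeeping are identical.
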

\begin{proof}
	Denote by \( S \) the set of neighbors of the newly added vertex. Similarly to before, we can apply Lemmas~\ref{lem:E20+I} and~\ref{lem:E20+Y} to exclude certain possibilities for \( S \). Thus, up to symmetry, we only need to consider the cases where \( S \) is one of the following sets: 
	\(\{e^1_1, e^1_2, e^2, e^3_1\}\), 
	\(\{e^1_1, e^1_2, e^2, e^3_3\}\), 
	\(\{e^1_1, e^1_2, e^3_1, e^3_2\}\), 
	\(\{e^1_1, e^1_2, e^3_1, e^4\}\), 
	\(\{e^1_1, e^1_2, e^3_3, e^4\}\), 
	\(\{e^1_1, e^2, e^3_1, e^3_2\}\), 
	\(\{e^1_1, e^3_1, e^3_2, e^4\}\), 
	\(\{e^2, e^3_1, e^3_2, e^4\}\).
	
	If \( S \) is \( \{e^1_1, e^1_2, e^2, e^3_1\} \) or \( \{e^1_1, e^1_2, e^2, e^3_3\} \), then \( H \) contains the third graph in Figure~\ref{fig:E20toF4e} as a subgraph.

	If \( S = \{e^1_1, e^1_2, e^3_1, e^3_2\} \), then \( H \) contains the sixth graph in Figure~\ref{fig:E20toF4e} as a subgraph.
	
	If \( S \) is \( \{e^1_1, e^1_2, e^3_1, e^4\} \), \( \{e^1_1, e^1_2, e^3_3, e^4\} \), or \( \{e^1_1, e^3_1, e^3_2, e^4\} \), then \( H \) contains the fourth graph in Figure~\ref{fig:E20+Y} as a subgraph.
	
	If \( S = \{e^1_1, e^2, e^3_1, e^3_2\} \), then \( H \) contains the fifth graph in Figure~\ref{fig:E20+Y} as a subgraph.

	If \( S = \{e^2, e^3_1, e^3_2, e^4\} \), then \( H \) contains the fourth graph in Figure~\ref{fig:E20sub} as a subgraph.
	
	In every case, \( H \) contains a minor of \( K_{3,4} \) or \( F_4 \).
\end{proof}

\begin{lemma} \label{lem:E20+H}
	Let \( e_1 \) and \( e_2 \) be two independent edges of \( E_{20} \). Let \( H \) be the graph obtained from \( E_{20} \) by subdividing each of \( e_1 \) and \( e_2 \) with a new vertex and then adding an edge between these two new vertices. Then \( H \) contains \( K_{3,4} \) or $F_4$ as a minor.
\end{lemma}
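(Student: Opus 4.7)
The plan is a case analysis on the unordered pair $\{e_1, e_2\}$ of independent edges, up to the $6$-element automorphism group of $E_{20}$. Write $e_1 = xy$ and $e_2 = uv$, and let $a$ and $b$ be the subdivision vertices inserted on $e_1$ and $e_2$ respectively, so that in $H$ the vertex $a$ has neighbors $\{x, y, b\}$, $b$ has neighbors $\{u, v, a\}$, and all other incidences agree with those of $E_{20}$.

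Three reductions furnish minors of $H$ that land in the scope of the preceding lemmas. First, contracting one edge from $\{xa, ya\}$ together with one edge from $\{ub, vb\}$ produces $E_{20}$ with a single extra edge joining one endpoint of $e_1$ to one endpoint of $e_2$; if any of the four possible extra edges falls among the pairs $\{e^0, e^3_i\}$ or $\{e^0, e^4\}$, Lemma~\ref{lem:E20+I} yields a $K_{3,4}$ minor. Second, contracting just $xa$ (and symmetrically $ya$, $ub$, $vb$) produces $E_{20}$ with an added vertex of degree $3$ whose neighborhood is $\{u, v, x\}$ (respectively $\{u, v, y\}$, $\{x, y, u\}$, $\{x, y, v\}$); whenever any one of these four triples is a triangle in $E_{20}$ or an independent set in $E_{20}$, Lemma~\ref{lem:E20+Y} supplies the conclusion. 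Third, contracting $ab$ gives a minor in which a new vertex of degree $4$ is adjacent to $\{x, y, u, v\}$, providing a partial analogue of Lemma~\ref{lem:E20+X} that can be upgraded to its full hypothesis if auxiliary contractions within $E_{20}$ are available.

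With these tools in hand, I would enumerate the orbits of independent-edge pairs in $E_{20}$ under the group of permutations of $\{e^1_1, e^1_2, e^1_3\}$, which because of the high symmetry gives only a small number of representatives. For each representative, at least one of the three reductions fires: verifying this amounts to checking adjacency patterns among $\{x, y, u, v\}$ and the distinguished vertices $e^0, e^3_1, e^3_2, e^3_3, e^4$.

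The main obstacle is the residual cases in which none of the extra edges produced by reduction~(a) matches the pairs of Lemma~\ref{lem:E20+I}, and none of the four degree-$3$ neighborhoods in reduction~(b) is a triangle or independent set. For each such configuration I plan to exhibit a minor directly, in the spirit of the explicit $\mu$-mappings used in the figures preceding this lemma: the path $x\,a\,b\,u$ (or an analogous choice inside $H$) gives the flexibility of one subdivided auxiliary branch through which a fourth branch vertex of $K_{3,4}$ (or the extra apex of $F_4$) can be realized while still routing $e_1$ and $e_2$ through $a$ and $b$. Because the number of such residual orbits is small, this final bookkeeping step should be tedious but straightforward.
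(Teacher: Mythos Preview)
Your overall plan---enumerate orbits of independent edge pairs under the $S_3$ symmetry, use Lemma~\ref{lem:E20+I} via the double contraction in reduction~(a) to prune, then finish the survivors by exhibiting explicit minors---is exactly what the paper does. After the pruning the paper is left with eleven representative pairs and dispatches each by pointing to one of the graphs already drawn in Figures~\ref{fig:E20sub}, \ref{fig:E20toF4e}, or \ref{fig:E20+Y} (plus one ad hoc $F_4$).

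Where your proposal goes wrong is reduction~(b). Contracting only $xa$ does \emph{not} produce ``$E_{20}$ with an added degree-$3$ vertex whose neighborhood is $\{u,v,x\}$''; it produces $E_{20}$ with the edge $uv$ \emph{subdivided} by $b$ together with the chord $bx$. The edge $uv$ is gone, so the hypothesis of Lemma~\ref{lem:E20+Y} is not met, and you cannot infer the minor from that lemma. Worse, each of your four triples $\{u,v,x\}$, $\{u,v,y\}$, $\{x,y,u\}$, $\{x,y,v\}$ contains one of the edges $uv$ or $xy$ of $E_{20}$, so none of them is ever an independent set; the independent-set branch of Lemma~\ref{lem:E20+Y} can never fire here. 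If you try to repair the triangle branch by a further contraction (say $bu$ or $bv$) you simply land back on one of the four $E_{20}+$edge graphs from reduction~(a), so (b) gives nothing new. Reduction~(c) has the analogous defect: contracting $ab$ yields $E_{20}-xy-uv$ plus a degree-$4$ vertex, not $E_{20}$ plus one, and you already flag this as only ``partial''.

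The upshot is that your shortcuts collapse to reduction~(a) alone, and your ``residual'' step must handle essentially all eleven of the paper's cases. That is fine---the proof still goes through---but you should not expect Lemmas~\ref{lem:E20+Y} or~\ref{lem:E20+X} to shrink the casework.
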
\begin{proof}
Again, we apply Lemma~\ref{lem:E20+I} to exclude certain possibilities for \( \{e_1, e_2\} \). Up to symmetry, \( \{e_1, e_2\} \) is one of the following sets: 
\(\{e^0 e^1_1, e^1_2 e^1_3\}\), 
\(\{e^0 e^2, e^1_1 e^1_2\}\), 
\(\{e^1_1 e^1_2, e^1_3 e^3_3\}\), 
\(\{e^1_1 e^1_2, e^2 e^3_1\}\), 
\(\{e^1_1 e^1_2, e^2 e^3_3\}\), 
\(\{e^1_1 e^1_2, e^3_1 e^4\}\), 
\(\{e^1_1 e^1_2, e^3_3 e^4\}\), 
\(\{e^1_1 e^3_1, e^1_2 e^3_2\}\), 
\(\{e^1_1 e^3_1, e^2 e^3_2\}\), 
\(\{e^1_1 e^3_1, e^3_2 e^4\}\), 
\(\{e^2 e^3_1, e^3_2 e^4\}\).

If \( \{e_1, e_2\} \) is \( \{e^0 e^1_1, e^1_2 e^1_3\} \) or \( \{e^0 e^2, e^1_1 e^1_2\} \), then \( H \) contains the first graph in Figure~\ref{fig:E20toF4e} as a minor.

If \( \{e_1, e_2\} \) is \( \{e^1_1 e^1_2, e^1_3 e^3_3\} \), \( \{e^1_1 e^1_2, e^2 e^3_3\} \), or \( \{e^1_1 e^1_2, e^3_3 e^4\} \), then \( H \) contains the fourth graph in Figure~\ref{fig:E20toF4e} as a minor.

If \( \{e_1, e_2\} \) is \( \{e^1_1 e^1_2, e^2 e^3_1\} \) or \( \{e^1_1 e^1_2, e^3_1 e^4\} \), then \( H \) contains the sixth graph in Figure~\ref{fig:E20toF4e} as a minor.

If \( \{e_1, e_2\} = \{e^1_1 e^3_1, e^1_2 e^3_2\} \), then one can obtain \( F_4 \) from \( H \) by removing \( e^0 e^1_3 \) and \( e^1_1 e^1_2 \) and contracting \( e^1_1 v \), where \( v \) is the vertex subdividing \( e^1_1 e^3_1 \).

If \( \{e_1, e_2\} \) is \( \{e^1_1 e^3_1, e^2 e^3_2\} \), \( \{e^1_1 e^3_1, e^3_2 e^4\} \), or \( \{e^2 e^3_1, e^3_2 e^4\} \), then \( H \) contains the fourth graph in Figure~\ref{fig:E20sub} as a minor. (In fact, the graphs \( H \) corresponding to \( \{e_1, e_2\} = \{e^1_1 e^3_1, e^3_2 e^4\} \) and \( \{e_1, e_2\} = \{e^2 e^3_1, e^3_2 e^4\} \) are isomorphic.)

In every case, \( H \) contains a minor of \( K_{3,4} \) or \( F_4 \).
\end{proof}

\begin{proposition} \label{pro:E20span}
	Let $G$ be a $4$-connected graph that has neither a minor of $K_{3,4}$ nor a minor of $F_4$. Then every JT-subdivision $\eta(E_{20})$ of $E_{20}$ in $G$ is a spanning subgraph of $G$.
\end{proposition}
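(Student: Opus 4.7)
The plan is to proceed by contradiction: suppose some bridge $B$ of $\eta(E_{20})$ contains a vertex lying outside $\eta(E_{20})$. By Lemma~\ref{lem:E20sub}, every graph obtained from $E_{20}$ by splitting a vertex contains a minor of $K_{3,4}$ or $F_4$; since $G$ has neither, $G$ contains no minor of any such split graph. Both $E_{20}$ and $G$ are internally $4$-connected, so Lemma~\ref{lem:JTstable} applies to the JT-subdivision $\eta$ and yields that $B$ is stable and every segment of $\eta$ is an induced path in $G$. As $G$ is $4$-connected, the attachments of $B$ form a separating set of size at least four.

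Next, I would contract all internal vertices of $B$ into a single vertex $v$ and then, for each attachment that is an interior vertex of some segment, contract an initial or terminal subpath of that segment so as to realize the attachment either at a branch vertex $\eta(w)$ of $\eta(E_{20})$ or as a single subdivision vertex of the corresponding edge of $E_{20}$. Inducedness of the segments ensures that no extraneous adjacencies are created by these contractions. The resulting minor $H$ of $G$ is then isomorphic to $E_{20}$ augmented by one of the following four patterns: a single new edge between two branch vertices, a new vertex of degree three joined to three branch vertices, a new vertex of degree four joined to four branch vertices, or two new subdivision vertices placed on independent edges and joined by an edge (obtained by contracting $v$ into one of the two retained subdivision vertices). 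These are precisely the configurations covered by Lemmas~\ref{lem:E20+I}, \ref{lem:E20+Y}, \ref{lem:E20+X}, and \ref{lem:E20+H}, each of which produces a $K_{3,4}$- or $F_4$-minor in $H$, and hence in $G$, contradicting the hypothesis.

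The main obstacle is the case analysis showing that every stable $4$-attachment configuration reduces to one of these four patterns. Stability of $B$ is precisely what rules out the three degenerate placements --- attachments confined to a single segment together with one branch vertex, to two segments sharing a common branch endpoint, or to the three segments incident with a single degree-$3$ branch vertex --- that would evade all four augmentation lemmas. When two attachments happen to lie on a common segment, one must carefully decide between sliding them both toward branch vertices (enabling Lemma~\ref{lem:E20+X} if a fourth distinct branch vertex is reachable, or Lemma~\ref{lem:E20+Y} if exactly three branch vertices arise and form one of the permitted adjacency patterns) and retaining them as two subdivision vertices (enabling Lemma~\ref{lem:E20+H}); the correct choice depends on how the remaining attachments distribute across $\eta(E_{20})$. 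The sizable symmetry group of $E_{20}$ listed at the start of the section cuts this casework down to a manageable size.
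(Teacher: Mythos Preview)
Your outline follows the same overall route as the paper---assume a bridge, invoke stability via Lemma~\ref{lem:JTstable} and Lemma~\ref{lem:E20sub}, then contract and reduce to one of the augmentation lemmas---but there is a genuine gap in how you propose to finish. Lemma~\ref{lem:E20+Y} applies only when the three neighbours of the new vertex are pairwise adjacent or pairwise non-adjacent, and Lemma~\ref{lem:E20+I} covers only the four specific pairs listed there; your sketch does not explain why the ``mixed'' three-vertex configurations, or two-vertex configurations outside that short list, cannot arise after your contractions. Listing Lemma~\ref{lem:E20+I} among the target patterns is also a red herring: since $G$ is $4$-connected the bridge has at least four attachments, so after contracting its interior to a single vertex you never land on a bare edge between two branch vertices.

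The paper sidesteps this whole case split with one structural step you are missing: it first proves, using Lemma~\ref{lem:E20+H} together with stability, that \emph{no two attachments of $B$ lie in the same segment}. Once that is established, each of the four attachments can be slid along its segment toward a branch vertex, and a short ``moving'' argument (checking that the four target branch vertices can be chosen pairwise distinct, with the triangle case of Lemma~\ref{lem:E20+Y} invoked only when three attachments sit on the three segments of a triangle of $E_{20}$) reduces everything to a single application of Lemma~\ref{lem:E20+X}. Your plan to juggle all four augmentation lemmas simultaneously, deciding case by case whether to slide attachments or retain subdivision vertices, is in principle workable, but it is exactly the part you flag as ``the main obstacle'' and leave undone; the paper's approach shows that this obstacle dissolves once the no-two-in-one-segment claim is established first.
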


\begin{proof}	
	Suppose, to the contrary, that $\eta(E_{20})$ does not span $G$. So, there exists a bridge $B$ of $\eta(E_{20})$. 
	
Since \( G \) and \( E_{20} \) are internally 4-connected and Lemma~\ref{lem:E20sub} holds, we conclude by Lemma~\ref{lem:JTstable} that \( B \) is a stable bridge.

	We claim that no two attachments of \( B \) are contained in the same segment. Suppose otherwise, and let there be a segment \( \eta(e) \) containing two attachments of \( B \). By Lemma~\ref{lem:E20+H}, if \( B \) has an attachment in a segment disjoint from \( \eta(e) \), then that attachment must be in \( \eta(V(E_{20})) \). Therefore, by the stability of \( B \), it follows that \( G \) contains a minor of a graph obtained from \( E_{20} \) by adding a vertex and joining it either to four vertices of \( E_{20} \) or to three pairwise adjacent vertices of \( E_{20} \). By Lemma~\ref{lem:E20+X} or Lemma~\ref{lem:E20+Y}, this implies that \( G \) contains a minor of \( K_{3,4} \) or \( F_4 \), contradicting our assumption. Hence, no segment contains two attachments of \( B \).
	
	For any attachment \( v \) of \( B \) in \( \eta(u_1 u_2) - \eta(u_1) - \eta(u_2) \) for some \( u_1 u_2 \in E(E_{20}) \), we can move \( v \) to \( \eta(u_1) \) or \( \eta(u_2) \) by contracting \( \eta(u_1 u_2)[\eta(u_1), v] \) or \( \eta(u_1 u_2)[v, \eta(u_2)] \). When a vertex is moved, the graph is modified; however, this does not affect our argument, as our goal is to derive a contradiction by finding a minor of \( K_{3,4} \) or \( F_4 \).

	Let \( v_1, v_2, v_3, v_4 \) be four attachments of \( B \), which exist because \( G \) is 4-connected. We show that we can move these attachments step by step to distinct vertices in \( \eta(V(E_{20})) \). This will then lead to a contradiction by Lemma~\ref{lem:E20+X}.
	
	An attachment \( v_i \) with \( i \in [4] \) is \emph{active} if it is not in \( \eta(V(E_{20})) \); otherwise, it is \emph{inactive}. We move all active attachments to make them inactive while ensuring that no two attachments reach the same vertex in \( \eta(V(E_{20})) \). An active attachment \( v_i \) in \( \eta(u_1 u_2) - \eta(u_1) - \eta(u_2) \) for some \( u_1 u_2 \in E(E_{20}) \) is \emph{spooked} if another attachment has already been moved to \( \eta(u_1) \) or \( \eta(u_2) \). A vertex in \( \eta(V(E_{20})) \) is \emph{available} if none of \( v_1, v_2, v_3, v_4 \) occupies it.
	
While there are active attachments, we take a step of moves as follows. If there are spooked attachments, move all of them to available vertices. Otherwise, take one active (but not spooked) attachment and move it to an available vertex. 

It remains to show that any spooked attachment can be moved to an available vertex in \( \eta(V(E_{20})) \) and that when multiple spooked attachments need to be moved in a single step, they are placed at distinct available vertices.

	If at some step there is no available vertex for a spooked attachment, say \( v_4 \), in \( \eta(u_1 u_2) - \eta(u_1) - \eta(u_2) \) for some \( u_1 u_2 \in E(E_{20}) \), then two other attachments, say \( v_3 \) and \( v_2 \), must have already been moved to \( \eta(u_1) \) and \( \eta(u_2) \) in the previous step. Since only one vertex, say \( v_1 \), was moved in the first step, the sequence of events is as follows: after moving \( v_1 \) in the first step, the attachments \( v_2 \) and \( v_3 \) became spooked and were subsequently moved to \( \eta(u_1) \) and \( \eta(u_2) \) in the second step. Consequently, the segments containing \( v_2, v_3, v_4 \) form the image of a cycle of length three under \( \eta \). By Lemma~\ref{lem:E20+Y}, \( G \) contains a minor of \( F_4 \) or \( K_{3,4} \), which is a contradiction. These arguments also show that when multiple spooked attachments need to be moved in a single step, they can always be assigned to distinct available vertices in \( \eta(V(E_{20})) \). This completes the proof.
\end{proof}

We next prove a proposition about graphs with \( E_{22} \) minors.

\begin{proposition}\label{pro:E22}
	Let $G$ be a $4$-connected graph. If $G$ contains a minor of $E_{22}$, then it also contains a minor of $K_{3,4}$.
\end{proposition}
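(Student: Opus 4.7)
The plan follows the template established in Proposition~\ref{pro:D17toE20}. I apply Seymour's splitter theorem (Theorem~\ref{thm:Seymour}) with $G$ and $H := E_{22}$. Since $E_{22}$ is internally $4$-connected and non-planar (hence not a wheel), the hypotheses of the theorem are satisfied, and a graph isomorphic to $G$ can be constructed from $E_{22}$ by repeatedly adding edges between non-adjacent vertices and splitting vertices.

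I split into two cases depending on whether the construction contains any vertex splitting. If no splitting is performed, then $G$ is obtained from $E_{22}$ just by adding edges, and I would verify directly that a $4$-connected graph of this restricted form already contains a $K_{3,4}$ minor; this amounts to checking the (few) $4$-connected supergraphs of $E_{22}$ on the same vertex set. Otherwise, by stopping the construction at the first splitting step, $G$ contains as a minor some graph $H$ obtained from $E_{22}$ by adding zero or more edges and then splitting a single vertex $v$ into $v_1, v_2$, each receiving at least two neighbors of $v$.

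To control the case analysis of $H$, I exploit the rich symmetry of $E_{22}$: its $24$ automorphisms act as the full symmetric group on $\{\epsilon^1_1,\epsilon^1_2,\epsilon^1_3,\epsilon^1_4\}$, so it suffices to take one representative from each vertex orbit. For each such representative and each partition of its neighborhood into two blocks of size at least two, I would draw the resulting graph and exhibit a $K_{3,4}$ minor, precisely in the spirit of Figure~\ref{fig:D17toE20}. In most cases the minor is obtained by contracting one short path adjacent to $v_1$ or $v_2$, so that $v_1$, $v_2$, and an appropriate third vertex of $E_{22}$ serve as the part of size three in $K_{3,4}$, while four other vertices of $E_{22}$ form the part of size four.

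The principal obstacle is the combinatorial bookkeeping rather than any conceptual difficulty. Even after the symmetry reduction, each vertex orbit yields several inequivalent neighborhood partitions, and for a handful of stubborn cases the $K_{3,4}$ minor will have to use an edge added in the splitter-theorem step together with a contraction in the image of $v$. The cleanest presentation is a single figure displaying all representative splittings with the branch sets of a $K_{3,4}$ minor indicated in each, mirroring the treatment of $D_{17}$ and $E_{20}$ earlier in this section.
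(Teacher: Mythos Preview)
Your plan has a genuine gap in Case~2. You intend to show that every $3$-connected graph obtained from $E_{22}$ by (optionally) adding edges and then performing a \emph{single} vertex split already contains a $K_{3,4}$ minor. This statement is false. The vertices $\epsilon^0,\epsilon^2_1,\dots,\epsilon^2_4$ have degree~$4$ in $E_{22}$, so they can be split without any added edge; up to the $S_4$-symmetry there is one such split of $\epsilon^0$ and one of $\epsilon^2_i$, and these $10$-vertex $3$-connected graphs carry no $K_{3,4}$ minor. (This is exactly why the paper's own argument, when one $\epsilon^2_i$ is split, must produce an \emph{extra} path $P$ coming out of the bark of $\epsilon^1_1$, and why, when none of the $\epsilon^2_i$ are split, it needs all four auxiliary paths $P_1,\dots,P_4$.) The difference from the $D_{17}$ template is that there the hypothesis was merely $|V(G)|>|V(D_{17})|$, and one split really does suffice; here the hypothesis is $4$-connectivity of $G$, and that extra connectivity is used in an essential way that a single Seymour step cannot capture.

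The paper's proof bypasses the splitter theorem entirely: it fixes a model $\mu$ of $E_{22}$ in $G$, normalises it so that each branch set $\mu(v)$ with $\deg v\le 4$ is a path, and then uses $4$-connectivity of $G$ to force additional paths from the barks of the degree-$3$ vertices $\epsilon^1_i$ into the rest of the model. The resulting configurations are then matched against a short list of graphs (Figures~\ref{fig:E222s} and~\ref{fig:E221s}) each shown to contain $K_{3,4}$. Your Case~1 would essentially have to reproduce this last analysis anyway, since making a $9$-vertex supergraph of $E_{22}$ $4$-connected requires adding an edge at every $\epsilon^1_i$, which is precisely the data encoded by the four paths $P_i$. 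So even where your plan would work, it collapses to the paper's argument; and in the ``clean split'' sub-case of Case~2 it simply does not go through.
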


\begin{proof}
Suppose, to the contrary, that \( G \) has no \( K_{3,4} \) minor. 

As \( G \) contains \( E_{22} \) as a minor, we have a mapping \( \mu \) that maps \( V(E_{22}) \) into pairwise disjoint subsets of \( V(G) \) and maps \( E(E_{22}) \) to pairwise internally disjoint paths in \( G \), such that for each \( v \in V(E_{22}) \), the induced subgraph \( G[\mu(v)] \) is connected, and for any \( uv \in E(E_{22}) \), the path \( \mu(uv) \) joins \( \mu(u) \) and \( \mu(v) \) with no internal vertex of \( \mu(uv) \) in \( \bigcup_{w \in V(E_{22})} \mu(w) \). 

We may further require that for \( v \in \{\epsilon^1_1, \epsilon^1_2, \epsilon^1_3, \epsilon^1_4\} \), \( |\mu(v)| = 1 \), and for \( v \in \{\epsilon^0, \epsilon^2_1, \epsilon^2_2, \epsilon^2_3, \epsilon^2_4\} \), if \( |\mu(v)| > 1 \), then \( G[\mu(v)] \) has a spanning path such that each of its end-vertices is incident to precisely two paths \( \mu(e) \) corresponding to the edges \( e \) incident to \( v \). We say that \( v \in V(E_{22}) \) is \emph{split} if \( |\mu(v)|>1 \).

Note that at most one of \( \epsilon^2_1, \epsilon^2_2, \epsilon^2_3, \epsilon^2_4 \) is split; otherwise, \( G \) would contain one of the first four graphs given in Figure~\ref{fig:E222s} and hence \( K_{3,4} \) as a minor, which is a contradiction. 

Suppose one of \( \epsilon^2_1, \epsilon^2_2, \epsilon^2_3, \epsilon^2_4 \) is split, say \( \epsilon^2_3 \). Then \( \epsilon^2_1 \), \( \epsilon^2_2 \), and \( \epsilon^2_4 \) are not split. Since \( \mu(\epsilon^2_1) \cup \mu(\epsilon^2_2) \cup \mu(\epsilon^2_4) \) is not a 3-cut of \( G \), there exists a path \( P \) internally disjoint from \( \mu(E_{22}) \) with one end-vertex in \( (\mu(\epsilon^1_1 \epsilon^2_1) \cup \mu(\epsilon^1_1 \epsilon^2_2) \cup \mu(\epsilon^1_1 \epsilon^2_4)) - (\mu(\epsilon^2_1) \cup \mu(\epsilon^2_2) \cup \mu(\epsilon^2_4)) \) and the other end-vertex in \( \mu(E_{22}) - (\mu(\epsilon^1_1 \epsilon^2_1) \cup \mu(\epsilon^1_1 \epsilon^2_2) \cup \mu(\epsilon^1_1 \epsilon^2_4))  \), where \( \mu(E_{22}) \) denotes the subgraph of \( G \) induced by the union of the images of \( \mu \) over the vertices and edges of \( E_{22} \). Moreover, \( P \) has no end-vertex in \( \mu(\epsilon^2_3) \), since otherwise it would contain the fifth graph in Figure~\ref{fig:E222s} and hence \( K_{3,4} \) as a minor. We observe that the graph obtained from \( E_{22} \) by splitting \( \epsilon^1_1 \) is unique up to symmetry and admits two automorphisms. Exploiting this symmetry, we deduce that \( G \) must contain one of the first three graphs depicted in Figure~\ref{fig:E221s}, and thus \( K_{3,4} \) as a minor, which leads to a contradiction.

Therefore, none of \( \epsilon^2_1, \epsilon^2_2, \epsilon^2_3, \epsilon^2_4 \) is split. 

Now, for each \( i \in [1] \), there exists a path \( P_i \) internally disjoint from \( \mu(E_{22}) \) with one end-vertex in \( (\mu(\epsilon^1_i \epsilon^2_i) \cup \mu(\epsilon^1_i \epsilon^2_{i+1}) \cup \mu(\epsilon^1_i \epsilon^2_{i+3})) - (\mu(\epsilon^2_i) \cup \mu(\epsilon^2_{i+1}) \cup \mu(\epsilon^2_{i+3})) \) and the other end-vertex in \( \mu(E_{22}) - (\mu(\epsilon^1_i \epsilon^2_i) \cup \mu(\epsilon^1_i \epsilon^2_{i+1}) \cup \mu(\epsilon^1_i \epsilon^2_{i+3}))  \), with indices taken modulo \( 4 \). Again, \( P_i \) has no end-vertex in \( \mu(\epsilon^2_{i+2}) \), since otherwise it would contain the fifth graph in Figure~\ref{fig:E222s} as a minor. 

Considering the union of \( \mu(E_{22}) \) and the paths \( P_1, P_2, P_3, P_4 \), it is easy to see that \( G \) must contain a minor of one of the last three graphs depicted in Figure~\ref{fig:E221s}. This leads to the contradiction that \( G \) has a \( K_{3,4} \) minor.
\end{proof}

\begin{figure}[!ht]
	\centering{%
		\includegraphics[scale=1]{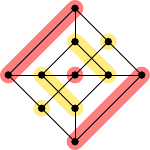}
	}
	\hspace{10pt}
	{%
		\includegraphics[scale=1]{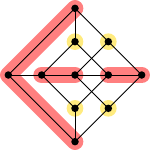}
	}
	\hspace{10pt}
	{%
		\includegraphics[scale=1]{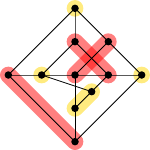}
	}
	\hspace{10pt}
	{%
		\includegraphics[scale=1]{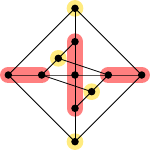}
	}
	\hspace{10pt}
	{%
		\includegraphics[scale=1]{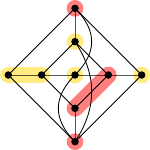}
	}
	\caption{Five graphs, each containing a minor of $K_{3,4}$.}
	\label{fig:E222s}
\end{figure}

\begin{figure}[!ht]
	\centering{%
		\includegraphics[scale=1]{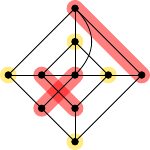}
	}
	\hfill
	{%
		\includegraphics[scale=1]{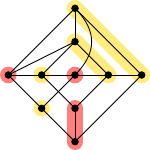}
	}
	\hfill
	{%
		\includegraphics[scale=1]{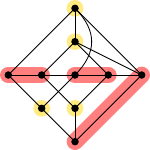}
	}
	\hfill
	{%
		\includegraphics[scale=1]{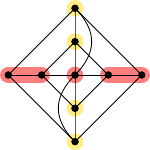}
	}
	\hfill
	{%
		\includegraphics[scale=1]{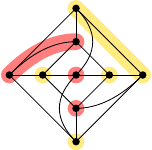}
	}
	\hfill
	{%
		\includegraphics[scale=1]{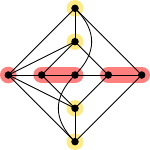}
	}
	\caption{Six graphs, each containing a minor of $K_{3,4}$.}
	\label{fig:E221s}
\end{figure}

Finally, we turn our attention to graphs containing \( F_4 \) minors. In analogy with our treatment of \( E_{20} \), we establish a sequence of lemmas leading to a concluding proposition.

\begin{lemma} \label{lem:F4sub}
	Let $H$ be a graph obtained from \( F_4 \) by splitting a vertex. Then $H$ contains a minor of $K_{3,4}$.
\end{lemma}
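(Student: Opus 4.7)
The plan is to proceed exactly as in the analogous Lemma~\ref{lem:E20sub} for $E_{20}$: exploit the automorphism group of $F_4$ to reduce the number of splits we must inspect, and then exhibit a $K_{3,4}$ minor in each remaining case by a short direct construction (ideally to be displayed in a figure analogous to Figure~\ref{fig:E20sub}).

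First, I would use the symmetry information already recorded for $F_4$: its automorphism group has order four and acts on the vertex set with $f^1_1$ in the same orbit as $f^1_4, f^2_1, f^2_4$. Combined with the evident left--right and top--bottom symmetries read off the picture in Figure~\ref{subfig:F4}, the eight vertices of $F_4$ split into two orbits, namely the ``corner'' orbit $\{f^1_1, f^1_4, f^2_1, f^2_4\}$ and the ``middle'' orbit $\{f^1_2, f^1_3, f^2_2, f^2_3\}$. Thus it suffices to consider splittings of one corner vertex (say $f^1_1$) and one middle vertex (say $f^1_2$).

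Next, for each representative vertex $v$, I would enumerate all possible splittings, i.e.\ all partitions of $N_{F_4}(v)$ into two parts, each of size at least two, modulo the stabilizer of $v$ in $\operatorname{Aut}(F_4)$. The degree of each vertex of $F_4$ can be read from the figure; for a vertex of degree four there is (up to the swap of $v_1$ and $v_2$) only one partition of the neighborhood into two parts of size exactly two, plus partitions of sizes $2+3$ or $2+4$ when the degree is five or six. The number of cases that survive symmetry is small, at most a handful for each of the two orbits.

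For every such resulting graph $H$, I would exhibit a $K_{3,4}$ minor by explicitly naming the three branch sets on one side and the four branch sets on the other, to be illustrated in a figure in the same style as Figures~\ref{fig:E20sub} and~\ref{fig:D17toE20}. Typically, the two vertices $v_1, v_2$ obtained from the split, together with the edge $v_1 v_2$, will serve as one branch set on the ``size-four'' side, while three of $v$'s former neighbors supply the other three branch sets of size four, and the remaining five vertices of $F_4$ are arranged into the three branch sets on the ``size-three'' side using short segments of $F_4$.

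The main obstacle I anticipate is bookkeeping: making sure I have correctly identified the two orbits and the stabilizers, and then being systematic enough that no split is missed. Once the case list is firmly in hand, verifying the $K_{3,4}$ minor in each case is mechanical, because $F_4$ is very close to containing $K_{3,4}$ already, and the extra edge $v_1 v_2$ produced by the split effectively combines with one of $v$'s old neighbors to form the fourth degree-three vertex in the part of size four.
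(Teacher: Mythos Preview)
Your overall strategy---reduce by symmetry, enumerate the splits, and check each for a $K_{3,4}$ minor---is exactly what the paper does. The problem is that your description of $F_4$ is wrong, and this derails the case analysis before it starts.

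First, $F_4$ has ten vertices, not eight: you have omitted $f^1$ and $f^2$. Second, and more seriously, the orbit structure you describe is incorrect. The vertices $f^1_2$ and $f^1_3$ are not in the same orbit: $f^1_2$ has degree~$3$ (neighbors $f^1, f^1_3, f^2_3$) while $f^1_3$ has degree~$4$ (neighbors $f^1_1, f^1_2, f^1_4, f^2_2$). In fact the four orbits are $\{f^1_1,f^1_4,f^2_1,f^2_4\}$, $\{f^1_2,f^2_2\}$, $\{f^1_3,f^2_3\}$, and $\{f^1,f^2\}$. Since splitting requires degree at least four, the only vertices that can be split are $f^1_3$ and $f^2_3$; your proposed representatives $f^1_1$ and $f^1_2$ both have degree~$3$ and cannot be split at all.

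Once this is corrected, the argument collapses to something shorter than you anticipated: by symmetry it suffices to split $f^1_3$, and the stabilizer of $f^1_3$ (of order~$2$, swapping $f^1_1\leftrightarrow f^1_4$) reduces the three $(2,2)$ partitions of its neighborhood to just two non-isomorphic splits. This matches the paper, which disposes of the lemma by pointing to two explicit graphs in a figure. Also note that for a degree-$4$ vertex there are three $(2,2)$ partitions up to swapping the parts, not one.
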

\begin{proof}
	Up to symmetry, the graphs obtained from \( F_4 \) by splitting a vertex are the first two graphs shown in Figure~\ref{fig:F4sub}. Each of these graphs contains a \( K_{3,4} \) minor.
\end{proof}

\begin{figure}[!ht]
	\centering{%
		\includegraphics[scale=1]{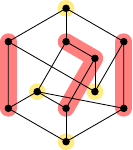}
	}
	\hfill
	{%
		\includegraphics[scale=1]{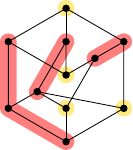}
	}
	\hfill
	{%
		\includegraphics[scale=1]{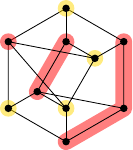}
	}
	\hfill
	{%
		\includegraphics[scale=1]{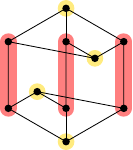}
	}
	\hfill
	{%
		\includegraphics[scale=1]{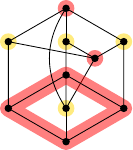}
	}
	\hfill
	{%
		\includegraphics[scale=1]{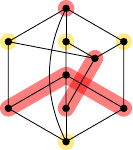}
	}
	\caption{Six graphs, each containing \( K_{3,4} \) as a minor.}
	\label{fig:F4sub}
\end{figure}

\begin{lemma} \label{lem:F4+I}
	Let \( H \) be a graph obtained from \( F_4 \) by adding an edge joining one of the following pairs: \( \{f^1_1, f^2_2\} \), \( \{f^1_2, f^2_4\} \), \( \{f^1_2, f^2_1\} \), \( \{f^1_4, f^2_2\} \), \( \{f^1_2, f^2_2\} \), \( \{f^1, f^2_2\} \), \( \{f^1_2, f^2\} \), \( \{f^1, f^2\} \). Then \( H \) contains \( K_{3,4} \) as a minor.
\end{lemma}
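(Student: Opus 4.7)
The plan is to follow the same template as Lemmas~\ref{lem:E20+I} and~\ref{lem:F4sub}: first, reduce the list of eight pairs by the automorphism group of $F_4$, and then, for each remaining representative, exhibit an explicit $K_{3,4}$ minor.

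First, I would exploit the symmetries of $F_4$. Recall that $F_4$ admits four automorphisms, which permute $\{f^1_1, f^1_4, f^2_1, f^2_4\}$ transitively, so several of the eight pairs lie in the same orbit. In particular, the pairs $\{f^1_1, f^2_2\}$ and $\{f^1_4, f^2_2\}$ are equivalent via the automorphism swapping $f^1_1 \leftrightarrow f^1_4$, and similarly $\{f^1_2, f^2_4\}$ and $\{f^1_2, f^2_1\}$ form a single orbit. The automorphism group also contains a ``side-swap'' exchanging the $f^1$-labels with the $f^2$-labels, under which $\{f^1, f^2_2\}$ pairs with $\{f^1_2, f^2\}$. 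After these identifications, the analysis reduces to approximately five representative cases: one representative from each orbit listed above, together with $\{f^1_2, f^2_2\}$ and $\{f^1, f^2\}$.

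Second, for each representative, I would construct the $K_{3,4}$ minor explicitly by contracting a small number of edges of $F_4$ near one or both endpoints of the newly added edge. The pattern is to merge an endpoint of the added edge with an adjacent degree-four vertex, thereby enlarging its neighborhood so that, together with the existing structure of $F_4$, one can identify three disjoint connected branch sets on one side of the desired $K_{3,4}$ and four on the other, with all twelve cross-adjacencies present. I expect the final write-up to mirror the style of the preceding lemmas: a single figure enumerating the representative augmented graphs, each drawn so that the $K_{3,4}$ minor is visible at a glance.

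The main obstacle is the case $\{f^1, f^2\}$, where the added edge joins two internal vertices of $F_4$ and the branch sets for $K_{3,4}$ are not immediately apparent. Here I anticipate needing contractions on both sides of the new edge to merge $f^1$ and $f^2$ with appropriate neighbors, producing the required degree profile. Once the right pair of contractions is chosen, checking that the resulting graph contains $K_{3,4}$ as a subgraph becomes routine; the delicate part is selecting those contractions so that the added edge actually participates in the bipartite structure rather than being redundant.
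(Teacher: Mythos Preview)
Your proposal is correct and follows essentially the same approach as the paper: reduce by the automorphism group of $F_4$ and then exhibit a $K_{3,4}$ minor in each representative, with the paper pointing to four specific drawings (the third through sixth graphs of Figure~\ref{fig:F4sub}) rather than describing contractions in words. One small refinement: the side-swap automorphism you mention sends $\{f^1_1,f^2_2\}$ to $\{f^2_1,f^1_2\}$, so the first four pairs in the list actually form a single orbit, giving four representatives rather than five; and the case $\{f^1,f^2\}$ you flag as potentially delicate is in fact no harder than the others.
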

\begin{proof}
	Up to symmetry, \( H \) contains either the third, fourth, fifth, or sixth graph from Figure~\ref{fig:F4sub} as a subgraph. In each case, it is clear that \( H \) contains \( K_{3,4} \) as a minor.
\end{proof}

\begin{lemma} \label{lem:F4+Y}
	Let \( H \) be a graph obtained from \( F_4 \) by adding a new vertex \( v \) and joining it to three vertices of \( F_4 \). Let \( S \) be the set of neighbors of \( v \). If \( H \) does not contain \( K_{3,4} \) as a minor, then either $S$ contains two adjacent vertices in \( H \), or \( S = \{f^1_1, f^1_4, f^2\} \), or \( S = \{f^2_1, f^2_4, f^1\} \).  
	Moreover, when \( S = \{f^1_1, f^1_4, f^2\} \) or \( S = \{f^2_1, f^2_4, f^1\} \), the graph \( H \) contains \( \mathfrak{Q}^+ \) as a minor.  
\end{lemma}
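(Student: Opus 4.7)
The plan is to mirror the template used for Lemmas~\ref{lem:E20+Y} and~\ref{lem:E20+X}: enumerate all candidate sets $S$ up to the automorphism group of $F_4$, eliminate most of them by invoking Lemma~\ref{lem:F4+I}, and handle the handful of exceptional cases directly. Assume from now on that no two vertices of $S$ are adjacent in $H$, as otherwise we are already in the first case of the conclusion. The key reduction is the following: if $\{u_1,u_2\}$ is any pair from the list of Lemma~\ref{lem:F4+I}, or the image of such a pair under one of the four automorphisms of $F_4$, and if $\{u_1,u_2\} \subseteq S$, then $H$ contains $K_{3,4}$ as a minor. Indeed, writing $S = \{u_1,u_2,u_3\}$, the subgraph of $H$ consisting of all edges of $F_4$ together with the two edges $vu_1$ and $vu_2$ is a subdivision of $F_4 + u_1u_2$ (with $v$ subdividing the new edge), and by Lemma~\ref{lem:F4+I} the latter graph already has a $K_{3,4}$ minor.

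Armed with this reduction, the next step is to enumerate the independent triples of $V(F_4)$ modulo automorphism. Since the four automorphisms act transitively on $\{f^1_1, f^1_4, f^2_1, f^2_4\}$, I may assume up to symmetry that either $f^1_1 \in S$ or $S$ is contained in $V(F_4) \setminus \{f^1_1, f^1_4, f^2_1, f^2_4\}$. In each case I would list the admissible choices for the remaining one or two vertices of $S$, where ``admissible'' means independent from the previously chosen vertices in $F_4$ and avoiding every forbidden pair from the first step. I anticipate this bookkeeping to leave exactly the two triples $\{f^1_1, f^1_4, f^2\}$ and $\{f^2_1, f^2_4, f^1\}$; these two triples are not in the same orbit because the two sides of $F_4$ labelled $1$ and $2$ are not interchanged by any automorphism.

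For the ``moreover'' statement, by the side-preserving symmetry it suffices to handle $S = \{f^1_1, f^1_4, f^2\}$; the case $S = \{f^2_1, f^2_4, f^1\}$ follows by an entirely analogous construction. Using the vertex labels from Figure~\ref{subfig:Q+}, I would write down an explicit branch-set assignment $\mu$ realizing a $\mathfrak{Q}^+$ minor in $H$, with $v$ serving as the apex of degree three and with $f^1_1$, $f^1_4$, $f^2$ playing the roles of its three immediate neighbours; the remaining branch sets use the other vertices and internal paths of $F_4$. The required internal connectivity of each branch set and the existence of the connecting paths can be read off from the explicit drawing of $F_4$ in Figure~\ref{subfig:F4}. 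The main source of friction throughout is the bookkeeping in the case enumeration: closing the list of Lemma~\ref{lem:F4+I} under the four automorphisms must be executed carefully so that no forbidden pair is overlooked, after which the eliminations are essentially mechanical.
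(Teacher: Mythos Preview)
Your overall strategy matches the paper's, but there is a genuine gap in the case analysis. You anticipate that, after closing the list of forbidden pairs from Lemma~\ref{lem:F4+I} under the automorphism group and discarding every independent triple containing such a pair, only the two special triples $\{f^1_1,f^1_4,f^2\}$ and $\{f^2_1,f^2_4,f^1\}$ survive. This is false. The eight pairs in Lemma~\ref{lem:F4+I} are already closed under all four automorphisms of $F_4$, and every one of them involves $f^1_2$ or $f^2_2$ (or is $\{f^1,f^2\}$). Consequently any independent triple avoiding both $f^1_2$ and $f^2_2$ and not equal to $\{f^1,f^2,\ast\}$ slips through your filter. Concretely, up to symmetry the triples
\[
\{f^1_1,f^1_2,f^1_4\},\quad \{f^1,f^1_3,f^2_3\},\quad \{f^1,f^1_3,f^2_1\},\quad \{f^1_1,f^1_4,f^2_3\}
\]
are independent in $F_4$, contain no pair from Lemma~\ref{lem:F4+I}, and are not among the two exceptional triples. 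The paper handles each of these four by a direct $K_{3,4}$-minor verification (the last four graphs in Figure~\ref{fig:F4+Y}); your proposal omits this work entirely.

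A smaller error: you assert that the two sides of $F_4$ are not interchanged by any automorphism, and hence that $\{f^1_1,f^1_4,f^2\}$ and $\{f^2_1,f^2_4,f^1\}$ lie in different orbits. In fact $F_4$ has an automorphism sending $f^1_1\mapsto f^2_1$ (the paper states this explicitly), and under it the first triple maps to the second. This does not damage the argument, since you planned symmetric constructions anyway, but it does mean your orbit count is off and would have led you to under-enumerate in the main case split as well.
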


\begin{proof}
	As illustrated in the first graph in Figure~\ref{fig:F4+Y}, \( H \) contains \( \mathfrak{Q}^+ \) as a minor when \( S \) is \( \{f^1_1, f^1_4, f^2\} \) or \( \{f^2_1, f^2_4, f^1\} \).  
	
	Now, suppose that \( S \) is an independent set and is neither \( \{f^1_1, f^1_4, f^2\} \) nor \( \{f^2_1, f^2_4, f^1\} \). We will show that \( H \) contains a \( K_{3,4} \) minor.  
	
	By Lemma~\ref{lem:F4+I}, the set \( S \) cannot contain any of the following pairs:  
	\( \{f^1_1, f^2_2\} \),  
	\( \{f^1_2, f^2_4\} \),  
	\( \{f^1_2, f^2_1\} \),  
	\( \{f^1_4, f^2_2\} \),  
	\( \{f^1_2, f^2_2\} \),  
	\( \{f^1, f^2_2\} \),  
	\( \{f^1_2, f^2\} \), or  
	\( \{f^1, f^2\} \).  
	
	Therefore, up to symmetry, \( S \) must be one of the following sets:  
	\( \{f^1_1, f^1_2, f^1_4\} \),  
	\( \{f^1, f^1_3, f^2_3\} \),  
	\( \{f^1, f^1_3, f^2_1\} \), or  
	\( \{f^1_1, f^1_4, f^2_3\} \). 
	The corresponding graphs \( H \) are depicted as the last four graphs in Figure~\ref{fig:F4+Y}. Each of these graphs contains a minor of \( K_{3,4} \).
\end{proof}

\begin{figure}[!ht]
	\centering{%
		\includegraphics[scale=1]{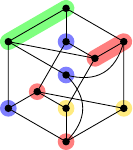}
	}
	\hspace{10pt}
	{%
		\includegraphics[scale=1]{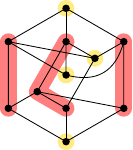}
	}
	\hspace{10pt}
	{%
		\includegraphics[scale=1]{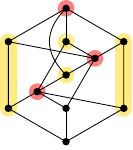}
	}
	\hspace{10pt}
	{%
		\includegraphics[scale=1]{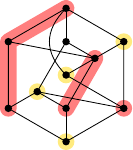}
	}
	\hspace{10pt}
	{%
		\includegraphics[scale=1]{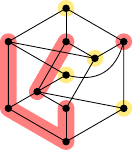}
	}
	\caption{The first graph contains \( \mathfrak{Q}^+ \) as a minor. The last four graphs contain \( K_{3,4} \) as a minor.}
	\label{fig:F4+Y}
\end{figure}

\begin{lemma} \label{lem:F4+X}
	Let \( H \) be a graph obtained from \( F_4 \) by adding a new vertex and joining it to four vertices of \( F_4 \). Then \( H \) contains \( K_{3,4} \) as a minor.
\end{lemma}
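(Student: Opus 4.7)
The plan is to mirror the strategy used for Lemma \ref{lem:E20+X}: assume for contradiction that $H$ has no $K_{3,4}$ minor, let $S$ be the set of four neighbors of the newly added vertex $v$, and whittle the possibilities for $S$ down by combining Lemmas \ref{lem:F4+I} and \ref{lem:F4+Y} with the automorphism group of $F_4$.

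First I would apply Lemma \ref{lem:F4+I}: if $S$ contains any of the eight listed non-adjacent pairs, then the path of length two through $v$ between that pair contracts to an edge, so $H$ contains $F_4 + e$ as a minor for one of the bad edges $e$, yielding a $K_{3,4}$ minor and a contradiction. This eliminates all those pairs from $S$. Next I would apply Lemma \ref{lem:F4+Y} to each of the four 3-subsets $T \subset S$, which views the fourth neighbor as irrelevant and forces $T$ either to contain two $F_4$-adjacent vertices, or to be one of the two special independent triples $\{f^1_1, f^1_4, f^2\}$ and $\{f^2_1, f^2_4, f^1\}$; otherwise Lemma \ref{lem:F4+Y} already gives a $K_{3,4}$ minor.

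Using the symmetry group of $F_4$ (order $4$, acting on $\{f^1_1, f^1_4, f^2_1, f^2_4\}$), I would enumerate all $S$ satisfying both restrictions up to symmetry. The cases split naturally into (i) $S$ contains one of the two special triples (extended by a fourth vertex avoiding every forbidden pair), and (ii) every 3-subset of $S$ contains an edge of $F_4$, so $S$ is covered by $F_4$-edges in a restricted way. For each surviving $S$, I would exhibit a concrete $K_{3,4}$ minor in $H$ — typically by choosing the three ``part-of-three'' branch vertices among $v$, a split vertex of $F_4$ and a short path, and using $v$'s four edges to route the required four-way fan — in the style of the pictures in Figures \ref{fig:F4sub} and \ref{fig:F4+Y}.

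The main obstacle is case (i), the special-triple cases, since Lemma \ref{lem:F4+Y} only promises a $\mathfrak{Q}^+$ minor there, not a $K_{3,4}$ minor; the fourth neighbor must therefore be used explicitly to upgrade the $\mathfrak{Q}^+$ structure into a $K_{3,4}$. I expect that in each such subcase the fourth vertex provides an extra path that, together with $v$, creates the missing degree-$3$ branch of $K_{3,4}$, but verifying this cleanly for every admissible fourth vertex — while keeping the case list manageable via the automorphisms swapping superscripts $1$ and $2$ and swapping subscripts $1$ and $4$ — is the bulk of the work.
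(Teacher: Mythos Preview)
Your plan is essentially the same as the paper's: reduce with Lemmas~\ref{lem:F4+I} and~\ref{lem:F4+Y}, treat the special-triple extensions $S\supset\{f^1_1,f^1_4,f^2\}$ separately (where the paper checks each possible fourth vertex directly), then reduce the remaining $S$ to two disjoint $F_4$-adjacent pairs and enumerate up to symmetry, exhibiting a $K_{3,4}$ minor in each case via the figures. Your identification of the special-triple case as the crux is accurate, and the paper resolves it exactly as you anticipate, using the fourth neighbor together with $v$ to locate one of the pictured $K_{3,4}$-minor-containing subgraphs.
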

\begin{proof}
	Let $v$ be the new vertex, and \( S \) be the set of neighbors of \( v \).
	Suppose, to the contrary, that $H$ does not contain $K_{3,4}$ as a minor.
	
	Suppose \( S \supset \{f^1_1, f^1_4, f^2\} \). If \( S \) contains one of \( f^1, f^1_2, f^2_2 \) (as the fourth neighbor of $v$), then by Lemma~\ref{lem:F4+I}, \( H \) contains a \( K_{3,4} \) minor. If \( S \) contains \( f^1_3 \) or \( f^2_3 \), then \( H \) contains the first graph in Figure~\ref{fig:F4sub} or the fifth graph in Figure~\ref{fig:F4+Y}, and hence a \( K_{3,4} \) minor. If \( S \) contains \( f^2_1 \) or \( f^2_4 \), then \( H \) contains a minor of the second graph in Figure~\ref{fig:F4+X}, and thus a \( K_{3,4} \) minor. Thus, \( S \not\supset \{f^1_1, f^1_4, f^2\} \), and, by symmetry, \( S \not\supset \{f^2_1, f^2_4, f^1\} \).

	We show that it suffices to assume \( S \) consists of two disjoint pairs of adjacent vertices. By Lemma~\ref{lem:F4+Y}, \( S \) contains two adjacent vertices, say \( v_1 \) and \( v_2 \). If \( v_3 \) and \( v_4 \) are not adjacent, applying Lemma~\ref{lem:F4+Y} and the fact that \( F_4 \) has no cycle of length 3, we conclude that, without loss of generality, \( v_1 \) is adjacent to \( v_3 \) and \( v_2 \) is adjacent to \( v_4 \). Hence, we can assume that \( S \) consists of two pairs of adjacent vertices.
	
	By Lemma~\ref{lem:F4+I}, \( S \) cannot contain any of the following pairs: \( \{f^1_1, f^2_2\} \), \( \{f^1_2, f^2_4\} \), \( \{f^1_2, f^2_1\} \), \( \{f^1_4, f^2_2\} \), \( \{f^1_2, f^2_2\} \), \( \{f^1, f^2_2\} \), \( \{f^1_2, f^2\} \), or \( \{f^1, f^2\} \).
	
	Therefore, up to symmetry, \( S \) is one of the following sets:  
	\( \{f^1, f^1_1, f^1_2, f^1_3\} \),  
	\( \{f^1, f^1_1, f^1_2, f^2_3\} \),  
	\( \{f^1, f^1_1, f^1_3, f^1_4\} \),  
	\( \{f^1, f^1_1, f^1_4, f^2_1\} \),  
	\( \{f^1, f^1_1, f^2_1, f^2_3\} \),  
	\( \{f^1, f^1_1, f^2_3, f^2_4\} \),  
	\( \{f^1_1, f^1_2, f^1_3, f^2_3\} \),  
	\( \{f^1_1, f^1_3, f^1_4, f^2_1\} \),  
	\( \{f^1_1, f^1_3, f^2_1, f^2_3\} \),  
	\( \{f^1_1, f^1_3, f^2_3, f^2_4\} \),  
	or \( \{f^1_1, f^1_4, f^2_1, f^2_4\} \).
	
	If \( S \) is \( \{f^1, f^1_1, f^1_2, f^1_3\} \) or \( \{f^1_1, f^1_2, f^1_3, f^2_3\} \), then $S \supset \{f^1_1, f^1_2, f^1_3\}$ and \( H \) contains the second graph in Figure~\ref{fig:F4sub} as a subgraph. 
	
	If \( S \) is \( \{f^1, f^1_1, f^1_2, f^2_3\} \), or \(  \{f^1, f^1_1, f^2_1, f^2_3\} \), or \( \{f^1, f^1_1, f^2_3, f^2_4\} \), then $S \supset \{f^1, f^1_1, f^2_3\}$ and \( H \) contains the first graph in Figure~\ref{fig:F4+X} as a subgraph. 
	
	If \( S \) is \( \{f^1, f^1_1, f^1_3, f^1_4\} \) or \( \{f^1_1, f^1_3, f^1_4, f^2_1\} \), then $S \supset \{f^1_1, f^1_3, f^1_4\}$ and \( H \) contains the first graph in Figure~\ref{fig:F4sub} as a subgraph.  
	
	If \( S = \{f^1, f^1_1, f^1_4, f^2_1\} \), then \( H \) contains the second graph in Figure~\ref{fig:F4+X} as a subgraph.  
	
	If \( S = \{f^1_1, f^1_3, f^2_1, f^2_3\} \) or \( S = \{f^1_1, f^1_3, f^2_3, f^2_4\} \), then $S \supset \{f^1_1, f^1_3, f^2_3\}$ and \( H \) contains the third graph in Figure~\ref{fig:F4+X} as a subgraph. 
	
	If \( S = \{f^1_1, f^1_4, f^2_1, f^2_4\} \), then \( H \) contains the fourth graph in Figure~\ref{fig:F4+X} as a subgraph. 
	
	In all cases, \( H \) contains a \( K_{3,4} \) minor.
\end{proof}

\begin{figure}[!ht]
	\centering{%
		\includegraphics[scale=1]{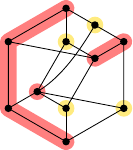}
	}
	\hfill
	{%
		\includegraphics[scale=1]{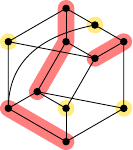}
	}
	\hfill
	{%
		\includegraphics[scale=1]{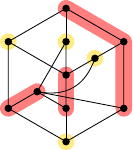}
	}
	\hfill
	{%
		\includegraphics[scale=1]{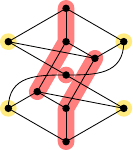}
	}
	\hfill
	{%
		\includegraphics[scale=1]{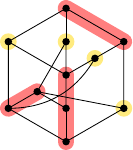}
	}
	\hfill
	{%
		\includegraphics[scale=1]{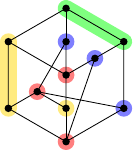}
	}
	\caption{The first five graphs contain \( K_{3,4} \) as a minor. The sixth graph contains \( \mathfrak{Q}^+ \) as a minor.}
	\label{fig:F4+X}
\end{figure}

\begin{lemma} \label{lem:F4+H}
	Let \( e_1 \) and \( e_2 \) be two independent edges of \( F_4 \). Let \( H \) be the graph obtained from \( F_4 \) by subdividing each of \( e_1 \) and \( e_2 \) with a new vertex and then adding an edge between these two new vertices. Then \( H \) contains \( K_{3,4} \) as a minor.
\end{lemma}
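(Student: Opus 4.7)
The plan is to mimic the proof of Lemma~\ref{lem:E20+H}. First, I would enumerate, up to the four automorphisms of $F_4$, all unordered pairs $\{e_1, e_2\}$ of independent edges of $F_4$. Since the automorphism group acts transitively on $\{f^1_1, f^1_4, f^2_1, f^2_4\}$ and each automorphism fixes or swaps the two ``sides'' of the labeling, the number of essentially distinct pairs is small --- roughly a dozen --- and most of them are handled by a single uniform reduction.

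The uniform reduction is the following. Let $x_1$ be the vertex subdividing $e_1 = ab$ and $x_2$ the vertex subdividing $e_2 = cd$. For each of the four choices $(a',c') \in \{a,b\}\times\{c,d\}$, contracting the three-edge path $a'x_1 x_2 c'$ inside $H$ produces $F_4$ together with a single new edge between the opposite endpoints $b'$ and $d'$. If any of these four resulting pairs $\{b',d'\}$ appears in the list of pairs of Lemma~\ref{lem:F4+I}, namely $\{f^1_1,f^2_2\}$, $\{f^1_2,f^2_4\}$, $\{f^1_2,f^2_1\}$, $\{f^1_4,f^2_2\}$, $\{f^1_2,f^2_2\}$, $\{f^1,f^2_2\}$, $\{f^1_2,f^2\}$, or $\{f^1,f^2\}$, then $H$ already contains $K_{3,4}$ as a minor and that case is finished. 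I expect this to dispatch the majority of cases, because $f^1_2$ and $f^2_2$ are the high-degree ``inner'' vertices that are close to most pairs of independent edges.

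For the residual pairs (those for which none of the four contractions lands in the list of Lemma~\ref{lem:F4+I}), I would construct the $K_{3,4}$ minor by hand. The added edge $x_1 x_2$ provides an extra branch vertex which, together with one of $f^1$ or $f^2$ and one of the $f^i_j$, typically gives three parts on one side of the bipartition each sharing four common neighbors on the other side. Alternatively, after one or two further contractions (for instance contracting $x_1$ into an endpoint of $e_1$ while keeping $x_2$ as a branch vertex), $H$ reduces to a graph that contains one of the graphs already displayed in the proofs of Lemmas~\ref{lem:F4sub},~\ref{lem:F4+I}, or~\ref{lem:F4+Y} as a subgraph, and hence contains $K_{3,4}$ as a minor.

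The main obstacle is the bookkeeping: making sure the enumeration up to automorphism is complete and that, for each residual pair, the explicit minor map is correctly verified. However, by leveraging the symmetry of $F_4$ (which in particular maps $\{f^1_1,f^1_4\}$ to $\{f^2_1,f^2_4\}$ and fixes the pair $\{f^1,f^2\}$ setwise), each verification should be a short mechanical check analogous to the one performed for the pair $\{e^1_1 e^3_1,\, e^1_2 e^3_2\}$ at the end of Lemma~\ref{lem:E20+H}.
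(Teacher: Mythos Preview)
Your overall strategy---enumerate pairs up to symmetry and reduce each to a previously established $K_{3,4}$-containing minor---is exactly what the paper does. Two points, however, need attention.

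First, your contraction description is garbled: contracting all three edges of the path $a'x_1x_2c'$ identifies $a'$ with $c'$ and does \emph{not} return $F_4$ plus an edge. What you want is to contract only $x_1b'$ and $x_2d'$, so that $x_1x_2$ becomes the new edge $b'd'$ while the subdivided edges collapse back to $e_1$ and $e_2$.

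Second, and more substantively, you overestimate the reach of the uniform reduction. Every pair in the list of Lemma~\ref{lem:F4+I} involves $f^1_2$ or $f^2_2$, with the single exception $\{f^1,f^2\}$. Hence whenever neither $e_1$ nor $e_2$ is incident to $f^1_2$ or $f^2_2$, and the two edges are not simultaneously incident to $f^1$ and $f^2$, none of the four contractions lands in that list. This leaves a sizable residual family---for instance $\{f^1f^1_1,\,f^1_4f^2_1\}$, $\{f^1_1f^1_3,\,f^2_1f^2_3\}$, and $\{f^1_1f^2_4,\,f^1_4f^2_1\}$. For these, contracting just one of $x_1,x_2$ and invoking Lemma~\ref{lem:F4+Y} also fails: the three-neighbor set of the surviving subdivision vertex always contains the two endpoints of the other subdivided edge, an adjacent pair, which is precisely the situation Lemma~\ref{lem:F4+Y} does \emph{not} resolve. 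The paper disposes of these residual pairs by exhibiting the graphs of Figure~\ref{fig:F4+X} (the targets of Lemma~\ref{lem:F4+X}, namely $F_4$ plus a degree-four vertex) as minors of $H$; that family of target graphs is the missing ingredient in your plan.
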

\begin{proof}
Suppose one of \( e_1, e_2 \) is \( f^1 f^1_1 \), \( f^1 f^1_4 \), \( f^2 f^2_1 \), or \( f^2 f^2_4 \). Without loss of generality, assume that \( e_1 \) is \( f^1 f^1_1 \). If \( e_2 \) is \( f^1_2 f^1_3 \) or \( f^1_3 f^2_2 \), then \( H \) contains the second graph in Figure~\ref{fig:F4sub} as a minor. If \( e_2 \) is \( f^1_3 f^1_4 \), then \( H \) contains the first graph in Figure~\ref{fig:F4sub} as a minor. If \( e_2 \) is \( f^1_4 f^2_1 \), then \( H \) contains the second graph in Figure~\ref{fig:F4+X} as a minor. If \( e_2 \) is incident to \( f^2_3 \), then \( H \) contains the first graph in Figure~\ref{fig:F4+X} as a minor. If \( e_2 \) is incident to \( f^2 \), then \( H \) contains the sixth graph in Figure~\ref{fig:F4sub} as a minor. This exhausts all possibilities, and in each case, \( H \) has a \( K_{3,4} \) minor.

Suppose one of \( e_1, e_2 \) is \( f^1 f^1_2 \) or \( f^2 f^2_2 \). Without loss of generality, assume that \( e_1 \) is \( f^1 f^1_2 \). If \( e_2 \) is \( f^1_1 f^1_3 \) or \( f^1_4 f^1_3 \), then \( H \) contains the second graph in Figure~\ref{fig:F4sub} as a minor. If \( e_2 \) is incident to \( f^2_1 \) or \( f^2_4 \), then \( H \) contains the third graph in Figure~\ref{fig:F4sub} as a minor. If \( e_2 \) is incident to \( f^2_2 \), then \( H \) contains the fourth graph in Figure~\ref{fig:F4sub} as a minor. In all cases, \( H \) has a \( K_{3,4} \) minor.

	Therefore, we can assume that neither \( e_1 \) nor \( e_2 \) is incident to \( f^1 \) or \( f^2 \).
	
	Suppose one of \( e_1, e_2 \) is one of \( f^1_3 f^1_1 \), \( f^1_3 f^1_4 \), \( f^2_3 f^2_1 \), or \( f^2_3 f^2_4 \). Without loss of generality, we assume \( e_1 \) is \( f^1_3 f^1_1 \). If \( e_2 \) is \( f^1_4 f^2_1 \), then \( H \) contains the fifth graph in Figure~\ref{fig:F4+X} as a minor. If \( e_2 \) is incident to \( f^2_3 \), then \( H \) contains the third graph in Figure~\ref{fig:F4+X} as a minor. In any case, \( H \) has a \( K_{3,4} \) minor.
	
	Suppose one of \( e_1, e_2 \) is one of \( f^1_2 f^1_3 \) or \( f^2_2 f^2_3 \). Without loss of generality, we assume \( e_1 \) is \( f^1_2 f^1_3 \). If \( e_2 \) is incident to \( f^2_1 \) or \( f^2_4 \), then \( H \) contains the third graph in Figure~\ref{fig:F4sub} as a minor. If \( e_2 \) is \( f^2_2 f^2_3 \), then \( H \) contains the fourth graph in Figure~\ref{fig:F4sub} as a minor. In any case, \( H \) has a \( K_{3,4} \) minor.
	
By symmetry, it remains to consider the cases where \( \{e_1, e_2\} \) is \( \{f^1_1 f^2_4, f^1_2 f^2_3\} \), \( \{f^1_1 f^2_4, f^1_3 f^2_2\} \), \( \{f^1_1 f^2_4, f^1_4 f^2_1\} \), or \( \{f^1_2 f^2_3, f^1_3 f^2_2\} \). In these cases, \( H \) contains either the third graph in Figure~\ref{fig:F4sub}, the fourth graph in Figure~\ref{fig:F4+X}, or the fourth graph in Figure~\ref{fig:F4sub} as a minor, and thus has a \( K_{3,4} \) minor.
\end{proof}

\begin{lemma}\label{lem:F4+T}
	Let \( u_1, u_2, w_1, w_2 \in V(F_4) \) be distinct vertices of $F_4$ such that \( w_1 u_1, u_1 u_2, u_2 w_2 \in E(F_4) \). Let $H$ be obtained from $F_4$ by subdividing $u_1 u_2$ with a vertex $w_3$, adding a vertex $v$, and joining $v$ to $w_1$, $w_2$, and $w_3$. Then $H$ contains a minor of $K_{3,4}$.
\end{lemma}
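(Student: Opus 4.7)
The strategy parallels that of Lemmas~\ref{lem:F4+I}--\ref{lem:F4+H}: perform a case analysis on the path $w_1 u_1 u_2 w_2$ in $F_4$, exploiting the 4-element automorphism group of $F_4$ and reducing to the preceding lemmas. The key structural observation is that $H$ closely resembles the construction of Lemma~\ref{lem:F4+H}: the vertex $w_3$ subdivides $u_1 u_2$, the vertex $v$ together with its edges $v w_1, v w_2$ plays the role of a subdivision vertex on the (potentially virtual) edge $w_1 w_2$, and $v w_3$ is the connecting edge between the two subdivision vertices. This immediately suggests a split by whether $w_1 w_2$ is an edge of $F_4$.

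If $w_1 w_2 \in E(F_4)$, then since $u_1, u_2, w_1, w_2$ are distinct, the edges $u_1 u_2$ and $w_1 w_2$ are independent, so $H$ directly contains, as a subgraph, the graph described in Lemma~\ref{lem:F4+H} for the pair $(u_1 u_2, w_1 w_2)$, yielding a $K_{3,4}$ minor. If instead $w_1 w_2 \notin E(F_4)$, the next step is a contraction reduction. Contracting the two edges $v w_3$ and $w_3 u_2$ in $H$ merges $\{v, w_3, u_2\}$ into a single vertex and yields a minor isomorphic to $F_4 + u_2 w_1$, and symmetrically contracting $v w_3$ and $w_3 u_1$ yields $F_4 + u_1 w_2$. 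Note that $u_2 w_1$ and $u_1 w_2$ are genuine non-edges of $F_4$, since $F_4$ is triangle-free --- a fact used in the proof of Lemma~\ref{lem:F4+Y} --- and otherwise a triangle $w_1 u_1 u_2$ would arise. If either $u_1 w_2$ or $u_2 w_1$ appears in the list of non-edges of Lemma~\ref{lem:F4+I}, a $K_{3,4}$ minor of $H$ follows immediately.

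For the residual configurations where neither of the above reductions applies, I enumerate the remaining possibilities for $(w_1, u_1, u_2, w_2)$ in $F_4$ up to the automorphism group of $F_4$, which reduces the problem to a handful of representatives. For each representative, I exhibit an explicit $K_{3,4}$ minor in $H$ --- typically by performing further contractions to match a configuration from Lemma~\ref{lem:F4+X} (a new vertex adjacent to four vertices of $F_4$) or Lemma~\ref{lem:F4+Y}, or by displaying branch sets directly, in the style of the figure-based analyses of the preceding lemmas.

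The main obstacle will be the residual case analysis: while the two general reductions eliminate many configurations, several path configurations survive and each requires an individually tailored contraction scheme to reveal the $K_{3,4}$ minor. Organizing this enumeration carefully via the edge orbits of $F_4$, and using the fact that both $u_1 w_2$ and $u_2 w_1$ must avoid the Lemma~\ref{lem:F4+I} list (which heavily constrains where $u_1, u_2, w_1, w_2$ may lie relative to the distinguished vertices $f^1_2, f^2_2, f^1, f^2$), will be essential to keep the case count manageable.
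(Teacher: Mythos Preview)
Your outline is sound and essentially matches the paper's argument: both first dispose of the case $w_1 w_2 \in E(F_4)$ via Lemma~\ref{lem:F4+H}, and then enumerate the middle edge $e = u_1 u_2$ over the six edge orbits of $F_4$, checking each admissible $\{w_1, w_2\}$ against the figure-graphs already shown to contain $K_{3,4}$. Your intermediate reduction through Lemma~\ref{lem:F4+I} (contracting $\{v, w_3, u_2\}$ to realise $F_4 + u_2 w_1$, and symmetrically $F_4 + u_1 w_2$) is a genuine shortcut the paper does not take: it kills, for instance, every subcase of $e = f^1_2 f^2_3$ outright, since one of $u_1 w_2$, $u_2 w_1$ then always lands in the Lemma~\ref{lem:F4+I} list. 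One caution: your proposed fallback to Lemma~\ref{lem:F4+Y} is inert here, because after contracting $w_3$ into $u_i$ the neighbour set $\{w_1, w_2, u_i\}$ always contains the adjacent pair $\{w_i, u_i\}$, and Lemma~\ref{lem:F4+Y} draws no $K_{3,4}$ conclusion in that situation. The paper handles every residual subcase by exhibiting one of the graphs from Figures~\ref{fig:F4sub} or~\ref{fig:F4+X} as a minor, which is exactly the ``match a configuration from Lemma~\ref{lem:F4+X} or display branch sets directly'' step you anticipate.
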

\begin{proof}
	Denote $e := u_1 u_2$. By Lemma~\ref{lem:F4+H}, we can assume that $w_1 w_2 \neq E(F_4)$.
	
	By symmetry, there are six cases.
	
	If \( e = f^1 f^1_1 \), then we have \( \{w_1, w_2\} = \{f^1_2, f^2_4\} \) or \( \{w_1, w_2\} = \{f^1_4, f^2_4\} \), corresponding respectively to \( H \) containing a minor of the third graph in Figure~\ref{fig:F4sub} and the second graph in Figure~\ref{fig:F4+X}.  
	
	If \( e = f^1 f^1_2 \), then \( \{w_1, w_2\} = \{f^1_1, f^2_3\} \) or \( \{w_1, w_2\} = \{f^1_4, f^2_3\} \), and \( H \) contains the first graph in Figure~\ref{fig:F4+X} as a minor. 
	
	If \( e = f^1_1 f^1_3 \), then \( \{w_1, w_2\} \) is one of \( \{f^1, f^2_2\} \), \( \{f^2_4, f^1_2\} \), \( \{f^2_4, f^1_4\} \), or \( \{f^2_4, f^2_2\} \), and \( H \) contains the fifth or third graph in Figure~\ref{fig:F4sub} or the fifth graph in Figure~\ref{fig:F4+X} as a minor. 
	
	If \( e = f^1_2 f^1_3 \), then \( \{w_1, w_2\} \) is one of \( \{f^1, f^2_2\} \), \( \{f^2_3, f^1_1\} \), or \( \{f^2_3, f^1_4\} \), and \( H \) contains the fifth or second graph in Figure~\ref{fig:F4sub} as a minor. 
	
	If \( e = f^1_1 f^2_4 \), then \( \{w_1, w_2\} \) is one of \( \{f^1, f^2\} \), \( \{f^1, f^2_3\} \), \( \{f^1_3, f^2\} \), or \( \{f^1_3, f^2_3\} \), and \( H \) contains the sixth graph in Figure~\ref{fig:F4sub} or the first or third graph in Figure~\ref{fig:F4+X} as a minor. 
	
	Finally, if \( e = f^1_2 f^2_3 \), then \( \{w_1, w_2\} \) is one of \( \{f^1, f^2_1\} \), \( \{f^1, f^2_2\} \), \( \{f^1, f^2_4\} \), \( \{f^1_3, f^2_1\} \), or \( \{f^1_3, f^2_4\} \), and \( H \) contains the third or fifth graph in Figure~\ref{fig:F4sub} as a minor. 
	
	In any case, \( H \) contains a \( K_{3,4} \) minor. 
\end{proof}

\begin{proposition} \label{pro:F4span}
	Let $G$ be an internally $4$-connected graph without $K_{3,4}$ minors. Suppose $G$ has no $\mathfrak{Q}^+$ minor or $G$ is $4$-connected. Then every JT-subdivision $\eta(F_4)$ of $F_4$ in $G$ is a spanning subgraph of $G$.
\end{proposition}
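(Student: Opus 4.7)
The plan is to mirror the proof of Proposition~\ref{pro:E20span}, substituting the $F_4$-specific lemmas (Lemmas~\ref{lem:F4sub}, \ref{lem:F4+I}, \ref{lem:F4+Y}, \ref{lem:F4+X}, \ref{lem:F4+H}, \ref{lem:F4+T}) for their $E_{20}$ counterparts. Suppose for contradiction that $\eta(F_4)$ is not spanning, and let $B$ be a bridge of $\eta(F_4)$. Since $G$ and $F_4$ are internally $4$-connected, Lemma~\ref{lem:F4sub} together with Lemma~\ref{lem:JTstable} shows that $B$ is a stable bridge and that every segment of $\eta$ is an induced path of $G$.

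I first plan to establish that no two attachments of $B$ lie on a common segment. If some segment $\eta(e)$ carried two attachments, then by stability any further attachment would lie on a segment edge-disjoint from $\eta(e)$, and suitable contractions inside $B$ and along segments would exhibit in $G$ a minor of one of the configurations addressed by Lemma~\ref{lem:F4+H}, Lemma~\ref{lem:F4+X}, or Lemma~\ref{lem:F4+Y}. Each of these produces a $K_{3,4}$ minor---the $\mathfrak{Q}^+$-exceptional triples of Lemma~\ref{lem:F4+Y} cannot occur in this sub-case because two of the three attachments would have to share a segment endpoint and hence be adjacent---contradicting the hypothesis.

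Next, following the iterative relocation scheme from Proposition~\ref{pro:E20span}, I plan to move each attachment of $B$ to a branch vertex in $\eta(V(F_4))$ by successive contractions along its segment, handling ``spooked'' attachments (those whose segment has both endpoints already occupied) in batch steps. The potential obstruction arises when three attachments, originally seated on segments $\eta(w_1 u_1)$, $\eta(u_1 u_2)$, and $\eta(u_2 w_2)$ with $w_1 u_1 u_2 w_2$ a path in $F_4$, force a collision. This is precisely the configuration addressed by Lemma~\ref{lem:F4+T}, which delivers a $K_{3,4}$ minor, again a contradiction. Hence all attachments may be repositioned at pairwise distinct branch vertices of $F_4$.

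Finally, I split into the two hypothesis cases. If $G$ is $4$-connected, then $B$ has at least four attachments, which now sit at four distinct branch vertices of $F_4$, and Lemma~\ref{lem:F4+X} gives a $K_{3,4}$ minor, a contradiction. If instead $G$ is only internally $4$-connected and $\mathfrak{Q}^+$-minor-free, then $B$ could have exactly three attachments; internal $4$-connectivity forces them to be an independent triple of branch vertices. Lemma~\ref{lem:F4+Y} then supplies either a $K_{3,4}$ minor or one of the exceptional triples $\{f^1_1, f^1_4, f^2\}$ or $\{f^2_1, f^2_4, f^1\}$, in which case the same lemma provides a $\mathfrak{Q}^+$ minor---again contradicting the hypothesis in either sub-case. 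The main obstacle I anticipate is the relocation phase: because Lemma~\ref{lem:F4+Y} is weaker than its $E_{20}$-counterpart Lemma~\ref{lem:E20+Y}, both the spooked-attachment step and the final three-attachment step must lean on the new $\mathfrak{Q}^+$-exclusion hypothesis rather than yielding a direct $K_{3,4}$ conclusion.
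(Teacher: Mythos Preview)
The overall plan and the 4-connected case are sound, but the three-attachment case has a genuine gap. Your claim that ``internal 4-connectivity forces them to be an independent triple of branch vertices'' conflates two different notions of independence. Internal 4-connectivity of $G$ tells you only that the \emph{original} attachments of $B$ form an independent set in $G$; it says nothing about whether the \emph{relocated} branch vertices are independent in $F_4$. Concretely, if the three attachments lie on the interiors of $\eta(f^1 f^1_1)$, $\eta(f^1_1 f^1_3)$, and $\eta(f^1_3 f^1_4)$, then every assignment of these three to distinct branch vertices produces a triple containing an $F_4$-adjacent pair, and Lemma~\ref{lem:F4+Y} gives no conclusion. Your description of Lemma~\ref{lem:F4+T} as handling a ``collision'' in the relocation scheme is also misplaced: since $F_4$ has no triangles, the spooked-attachment obstruction from the proof of Proposition~\ref{pro:E20span} never arises here, and the scheme always succeeds in placing the attachments at \emph{distinct} branch vertices---the problem is that distinctness is not enough.

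The paper avoids this trap by \emph{not} relocating all three attachments. Having first observed that some attachment lies outside $\eta(V(F_4))$ (else the three branch vertices, being in pairwise distinct segments, are genuinely $F_4$-independent and Lemma~\ref{lem:F4+Y} applies directly), it keeps one such attachment $v_3$ on the interior of its segment $\eta(u_1 u_2)$ and moves only the other two to branch vertices $w_1, w_2 \notin \{u_1, u_2\}$, preferring $w_1, w_2$ non-adjacent when possible. Contracting $v_3$ alternately toward $u_1$ and toward $u_2$ and invoking Lemma~\ref{lem:F4+Y} then forces each of $u_1, u_2$ to be adjacent to one of $w_1, w_2$; if instead the only available relocation makes $w_1 w_2 \in E(F_4)$, a role-swap puts $v_1$ in the position of $v_3$. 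Either way one arrives at the path configuration $w_1 u_1 u_2 w_2$ required by Lemma~\ref{lem:F4+T}, which delivers the $K_{3,4}$ minor. This intermediate manoeuvre---using Lemma~\ref{lem:F4+Y} to \emph{produce} the hypotheses of Lemma~\ref{lem:F4+T} rather than trying to invoke Lemma~\ref{lem:F4+Y} as the endgame---is the idea missing from your outline.
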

\begin{proof}
	Suppose, to the contrary, that $\eta(F_4)$ does not span $G$. There exists a bridge $B$ of $\eta(F_4)$. 
	
	Since \( G \) and \( F_4 \) are internally 4-connected and Lemma~\ref{lem:F4sub} holds, we can apply Lemma~\ref{lem:JTstable} to conclude that \( B \) is a stable bridge.

	We first show that no two attachments of \( B \) belong to the same segment. Suppose this does not hold, and there exists a segment \( \eta(e) \) containing two attachments of \( B \). By Lemma~\ref{lem:F4+H}, if \( B \) has an attachment in some segment disjoint from \( \eta(e) \), then that attachment must be in \( \eta(V(F_4)) \). Therefore, by the stability of \( B \), it is not hard to show that \( G \) contains a minor of a graph obtained from \( F_4 \) by adding a vertex and joining it to four vertices of \( F_4 \). It follows from Lemma~\ref{lem:F4+X} that \( G \) contains a \( K_{3,4} \) minor, which is a contradiction. Hence, no segment can contain two attachments of $B$.

We first assume $G$ has no $\mathfrak{Q}^+$ minor. 
	
We claim that there exists an attachment of \( B \) outside \( \eta(V(F_4)) \). Suppose not. Then, by applying Lemma~\ref{lem:F4+Y} and using the fact that no two attachments of \( B \) lie in the same segment, we conclude that \( G \) contains a \( K_{3,4} \) minor, a contradiction.

Let \( v_1, v_2, v_3 \) be three attachments of \( B \) such that one of them is not in \( \eta(V(F_4)) \)

	We now show that it causes no loss of generality to assume the existence of distinct vertices \( u_1, u_2, w_1, w_2 \in V(F_4) \) such that \( w_1 u_1, u_1 u_2, u_2 w_2 \in E(F_4) \), \( v_3 \) is an internal vertex of \( \eta(u_1 u_2) \), and, for \( i \in [2] \), \( v_i \) is in the bark of $w_i$.
	
	We can assume that \( v_3 \) is in \( \eta(u_1 u_2) - \eta(u_1) - \eta(u_2) \) for some edge \( u_1 u_2 \in E(F_4) \). Note that \( v_1 \) and \( v_2 \) are not in the segment \( \eta(u_1 u_2) \).
	
	By appropriate edge contractions, we can move \( v_1 \) and \( v_2 \) to two distinct vertices in \( \eta(V(F_4)) \setminus \{\eta(u_1), \eta(u_2)\} \). More precisely, for \( i \in [2] \), we do nothing if \( v_i \) is already in \( \eta(V(F_4)) \setminus \{\eta(u_1), \eta(u_2)\} \); otherwise, letting \( \eta(e_i) \) be the segment containing \( v_i \), we can contract a subpath of \( \eta(e_i) \) that joins \( v_i \) to an end-vertex of \( \eta(e_i) \) other than \( \eta(u_1) \), \( \eta(u_2) \). Since \( F_4 \) has no cycle of length 3, this operation always allows us to move \( v_1 \) and \( v_2 \) to two distinct vertices in \( \eta(V(F_4)) \setminus \{\eta(u_1), \eta(u_2)\} \).
	
	Let \( w_1 \) and \( w_2 \) be vertices in \( V(F_4) \setminus \{u_1, u_2\} \) such that \( v_1 \) and \( v_2 \) are moved to \( \eta(w_1) \) and \( \eta(w_2) \), respectively. Whenever possible, we choose the moves such that \( w_1 \) and \( w_2 \) are non-adjacent vertices in \( V(F_4) \setminus \{u_1, u_2\} \).
	
	If \( w_1 \) and \( w_2 \) are non-adjacent in \( F_4 \), then it follows from Lemma~\ref{lem:F4+Y} that each of $u_1$ and $u_2$ is adjacent to one of $w_1$ or $w_2$. This yields the desired configuration as $v_i$ is in the bark of $w_i$.
	
	If \( w_1 \) and \( w_2 \) are adjacent in \( F_4 \), then, since \( v_1 \) and \( v_2 \) are not in the same segment, one of them—say \( v_1 \)—must be moved, and it has only one possible relocation. This happens only when \( v_1 \) is not in \( \eta(V(F_4)) \) and is in a segment that has either \( \eta(u_1) \) or \( \eta(u_2) \) as an end-vertex. In this case, we obtain the desired configuration by letting \( v_1 \) play the role of \( v_3 \).

	With this observation, we have that $G$ contains a minor of a graph that is obtained from $F_4$ by subdividing an edge $e = u_1 u_2 \in E(F_4)$ with a new vertex $w_3$, adding another new vertex $v$, and joining $v$ to $w_1, w_2, w_3$, where, for $i \in [2]$, $w_i$ is a neighbor of $u_i$ other than $u_{3-i}$ in $F_4$. By Lemma~\ref{lem:F4+T}, $G$ contains a minor of $K_{3,4}$, which is a contradiction.
	
Finally, it remains to address the case where \( G \) is 4-connected. In this case, \( B \) has at least four attachments. By arguments similar to those used in the proof of Proposition~\ref{pro:E20span}, one can easily show that \( G \) contains as a minor a graph obtained from \( F_4 \) by adding a vertex and joining it to four vertices. The resulting contradiction—that \( G \) has a \( K_{3,4} \) minor—then follows from Lemma~\ref{lem:F4+X}.
\end{proof}

\section{Minimal minors of 3-connected non-hamiltonian graphs}\label{sec:DMC}

In this section, we prove Ding and Marshall's conjecture.

Throughout, we assume that \( G \) is a minor-minimal counterexample to Conjecture~\ref{con:DM}.

Recall that, as shown in Section~\ref{sec:projective-planar}, \( G \) is not projective-planar.

Refining a classical result of Archdeacon~\cite{Archdeacon1981}, Robertson, Seymour, and Thomas proved that every 3-connected non-projective-planar graph contains a minor of one of the 23 graphs listed in Appendix~\ref{sec:A3}~\cite{Ding2014}. (Those 23 graphs are the 3-connected graphs from Archdeacon's list.) It is easy to see that all of these graphs, except \( D_{17} \), \( E_{20} \), and \( F_4 \), have a minor of \( K_{3,4} \) or \( \mathfrak{Q}^+ \). Therefore, as \( G \) is an internally 4-connected non-projective-planar graph without minors of $K_{3,4}$ or $\mathfrak{Q}^+$, it must contain a minor of \( D_{17} \), \( E_{20} \), or \( F_4 \).

Since $G$ is non-hamiltonian and $D_{17}$ is hamiltonian, if $G$ contains a minor of $D_{17}$, then $|V(G)| > |V(D_{17})|$. By Proposition~\ref{pro:D17toE20}, Lemma~\ref{lem:na4}, Proposition~\ref{pro:E20toF4DM}, and Lemma~\ref{lem:F4sub}, we conclude that $G$ contains a subdivision of $F_4$. Denote by $\eta(F_4)$ a JT-subdivision of $F_4$ in $G$. By Proposition~\ref{pro:F4span}, $\eta(F_4)$ is a spanning subgraph of $G$.

We now show some properties concerning internal vertices of segments of $\eta$.

\begin{claim} \label{cla:noint1}
	Let \( e \in E(F_4) \) be one of \( f^1_1 f^1_3 \), \( f^1_3 f^1_4 \), \( f^2_1 f^2_3 \), or \( f^2_3 f^2_4 \). Then the segment \( \eta(e) \) has no internal vertex.
\end{claim}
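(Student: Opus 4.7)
My plan is to use the automorphism group of $F_4$, which acts transitively on $\{f^1_1, f^1_4, f^2_1, f^2_4\}$ and correspondingly on the four edges listed in the claim, to reduce to the single case $e = f^1_1 f^1_3$. I will then suppose, for contradiction, that $\eta(e) = y_0 y_1 \cdots y_k$ with $y_0 = \eta(f^1_1)$, $y_k = \eta(f^1_3)$, and $k \geq 2$, and aim to produce a $K_{3,4}$ or $\mathfrak{Q}^+$ minor of $G$, contradicting minor-minimality.

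The first step will exploit that $\deg_{F_4}(f^1_3) = 4$, so $\deg_G(\eta(f^1_3)) \geq 4$: Lemma~\ref{lem:na4} applied to the edge $y_{k-1}\eta(f^1_3)$ forces $\deg_G(y_{k-1}) \leq 3$, and internal $4$-connectivity promotes this to equality. Since $\eta(e)$ is induced by Lemma~\ref{lem:JTstable}, the vertex $y_{k-1}$ must therefore have exactly one chord $y_{k-1}z$ with $z$ on some other segment $\eta(e')$. Next I would apply Lemma~\ref{lem:JTedge} with the degree-$3$ vertex $u = f^1_1$, $e_1 = e$, and $e_2$ ranging over $\{f^1 f^1_1, f^1_1 f^2_4\}$, to conclude that if $z$ is internal to one of the other segments at $\eta(f^1_1)$, then $\eta(e)[\eta(f^1_1), y_{k-1}]$ has a single edge, which forces $k = 2$.

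I then plan to convert this local picture into a minor of $G$ by suitable contractions. Contracting every segment other than $\eta(e)$ and $\eta(e')$ to its $F_4$-edge, contracting $\eta(e)[y_0,y_{k-1}]$ onto $y_0$, and contracting $\eta(e')$ either onto one of its endpoint branch vertices (so that $z$ collapses to some $w\in V(F_4)$) or onto a shorter subpath retaining $z$, will exhibit $G$ as having a minor of one of: $(a)$ $F_4$ with $e$ subdivided at a new vertex $x'$ together with an edge $x'w$ for some $w \in V(F_4)\setminus\{f^1_1,f^1_3\}$; or $(b)$ $F_4$ with two independent edges both subdivided, plus a new edge joining the subdivision vertices. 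Case $(b)$ is directly Lemma~\ref{lem:F4+H}'s setup, immediately giving a $K_{3,4}$ minor and a contradiction. In case $(a)$, contracting $x'$ into $f^1_1$ or $f^1_3$ yields $F_4$ augmented by an edge $f^1_1 w$ or $f^1_3 w$, and Lemma~\ref{lem:F4+I} handles all $w$'s for which $\{f^1_1,w\}$ or $\{f^1_3,w\}$ appears in its list.

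The hard part will be the remaining $w$'s in case $(a)$, where Lemma~\ref{lem:F4+I} does not apply. For these, I plan to run the Lemma~\ref{lem:na4} analysis at the other three segment-neighbors of $\eta(f^1_3)$ (along $\eta(f^1_2 f^1_3)$, $\eta(f^1_3 f^1_4)$, $\eta(f^1_3 f^2_2)$), each of which is also forced to have degree exactly $3$ and therefore a unique chord. Using one of those additional chords together with $y_{k-1}z$, and controlling their destinations by Lemma~\ref{lem:JTedge} at the corresponding degree-$3$ endpoints, I expect to produce a minor matching either Lemma~\ref{lem:F4+T} (a subdivision vertex joined by a new vertex to neighbors of its two endpoints) or Lemma~\ref{lem:F4+Y} (three neighbors of a newly added vertex lying in a forbidden configuration), in each case extracting the required $K_{3,4}$ or $\mathfrak{Q}^+$ minor. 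The delicate case analysis — tracking which chord configurations can coexist and which $w$'s force which structural minor — will be the most technical aspect of the argument.
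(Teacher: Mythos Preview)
Your approach diverges from the paper's in a way that creates real difficulties. The paper works with the internal vertex $v$ adjacent to the \emph{degree-three} end $\eta(f^1_1)$, not the one adjacent to the degree-four end $\eta(f^1_3)$. This choice is what makes the argument short: for a neighbour $u$ of $v$ lying in $\eta(f^1 f^1_1)-\eta(f^1_1)$ or $\eta(f^1_1 f^2_4)-\eta(f^1_1)$, Lemma~\ref{lem:JTedge} (applied with $e_1$ the segment containing $u$ and $e_2=e$) forces $u$ to be adjacent to $\eta(f^1_1)$, so $\{v,u,\eta(f^1_1)\}$ is a triangle, contradicting Lemma~\ref{lem:n3c}. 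You never invoke Lemma~\ref{lem:n3c}, and this is precisely the device that disposes of the cases you label ``hard.'' For $u$ in the bark of any of $f^1_4,f^1_2,f^2_2,f^2_3,f^2_1,f^2$, the paper simply reads off a $K_{3,4}$ minor from the graphs already exhibited in Figures~\ref{fig:F4sub} and~\ref{fig:F4+X}; no contractions, no Lemma~\ref{lem:F4+T} or~\ref{lem:F4+Y}, no auxiliary chords.

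Your plan, by contrast, has a genuine gap. Your appeal to Lemma~\ref{lem:F4+I} in case~(a) handles essentially only $w=f^2_2$ (the unique pair in that lemma involving $f^1_1$, and none involves $f^1_3$), so almost every destination $w$ falls into your ``hard part.'' There you propose to use Lemma~\ref{lem:na4} at the other segment-neighbours of $\eta(f^1_3)$ to produce further chords; but if one of those segments, say $\eta(f^1_3 f^1_4)$, happens to be a single edge, then the neighbour of $\eta(f^1_3)$ along it is $\eta(f^1_4)$ itself, and you get no constrained degree-three vertex and no extra chord to work with. Since at this point you have not yet established Claims~\ref{cla:noint1} or~\ref{cla:noint2} for the other segments at $f^1_3$, you cannot rule this out, and your case analysis does not close. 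The fix is to switch to the paper's vantage point at $\eta(f^1_1)$ and use Lemma~\ref{lem:n3c}.
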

\begin{proof}
	It suffices to prove the claim for \( e = f^1_1 f^1_3 \).
		
	Suppose, to the contrary, that \( \eta(e) \) has an internal vertex. Let \( v \) be the neighbor of \( \eta(f^1_1) \) in \( \eta(e) \). By Lemma~\ref{lem:JTstable} and the fact that \( G \) is 3-connected, we conclude that \( v \) is adjacent to some vertex \( u \) outside \( \eta(e) \).

	The vertex \( u \) is not in the bark of \( f^1_4 \), \( f^1_2 \), \( f^2_2 \), \( f^2_3 \), \( f^2_1 \), or \( f^2 \); otherwise, \( G \) would contain a minor of the first, second, or third graph in Figure~\ref{fig:F4sub}, or the third, fifth, or sixth graph in Figure~\ref{fig:F4+X}.
		
	If \( u \) lies in \( \eta(f^1 f^1_1) - \eta(f^1_1) \), then by Lemma~\ref{lem:JTedge}, \( u \) is adjacent to \( \eta(f^1_1) \). This would imply that \( \{v, u, \eta(f^1_1)\} \) induces a cycle of length three, contradicting Lemma~\ref{lem:n3c}. Similarly, if \( u \) lies in \( \eta(f^1_1 f^2_4) - \eta(f^1_1) \), then by Lemma~\ref{lem:JTedge}, \( u \) is adjacent to \( \eta(f^1_1) \), again contradicting Lemma~\ref{lem:n3c}.

	Thus, \( u \) must be in \( \eta(e) \), which is impossible.
\end{proof}

\begin{claim} \label{cla:noint2}
	Let \( e \in E(F_4) \) be \( f^1_2 f^1_3 \) or \( f^2_2 f^2_3 \). Then the segment \( \eta(e) \) has no internal vertex.
\end{claim}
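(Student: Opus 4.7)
The plan is to mimic the proof of Claim~\ref{cla:noint1} very closely, handling $e = f^1_2 f^1_3$ (the case $e = f^2_2 f^2_3$ follows by the symmetry of $F_4$ that exchanges the ``upper'' and ``lower'' halves). Suppose for contradiction that $\eta(f^1_2 f^1_3)$ has an internal vertex; let $v$ denote the neighbor of $\eta(f^1_2)$ along $\eta(f^1_2 f^1_3)$. By Lemma~\ref{lem:JTstable} the segment $\eta(f^1_2 f^1_3)$ is induced in $G$, so the only neighbors of $v$ inside that segment are its two neighbors on the segment; since $G$ is 3-connected, $v$ must have a neighbor $u$ outside $\eta(f^1_2 f^1_3)$.

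The core of the proof is then a case analysis on the location of $u$. First I would rule out $u$ lying in the bark of any vertex of $F_4$ that is not adjacent to $f^1_2$ in $F_4$: in each such case, together with the extra edge $vu$ one obtains a minor of a graph from $F_4$ together with an extra subdivided-edge incidence, which by the minor lemmas of Section~\ref{sec:DEEF} (specifically Lemmas~\ref{lem:F4+I}, \ref{lem:F4+Y}, \ref{lem:F4+X}, \ref{lem:F4+H}, and \ref{lem:F4+T}) forces a $K_{3,4}$ minor in $G$ and contradicts our standing hypothesis. Concretely, $u$ cannot lie in the barks of $f^1_1$, $f^1_4$, $f^2_1$, $f^2_2$, $f^2_3$, $f^2_4$, $f^2$, and (by the automorphism that swaps the roles of $f^1_2$ and $f^1_3$ locally) the analogous vertices on the $f^1_3$-side; these exclusions correspond to the forbidden subgraphs and minors catalogued in Figures~\ref{fig:F4sub} and \ref{fig:F4+X}. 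In each case I would simply exhibit the relevant figure as the target minor, exactly as in Claim~\ref{cla:noint1}.

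The remaining possibility is that $u$ lies on a segment of $\eta$ that shares an end-vertex with $\eta(f^1_2 f^1_3)$, namely in $\eta(f^1 f^1_2) - \eta(f^1_2)$ or $\eta(f^1_2 f^2_3) - \eta(f^1_2)$. Since $f^1_2$ has degree three in $F_4$, Lemma~\ref{lem:JTedge} applies and forces the subpath $\eta(f^1_2 f^1_3)[\eta(f^1_2), v]$ to have only one edge, so $v$ is adjacent to $\eta(f^1_2)$; then $u$ is adjacent to $\eta(f^1_2)$ as well (again by Lemma~\ref{lem:JTedge}), so $\{\eta(f^1_2), v, u\}$ induces a triangle, contradicting Lemma~\ref{lem:n3c}. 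Since $u$ cannot lie on $\eta(f^1_2 f^1_3)$ itself, this exhausts all cases and completes the contradiction.

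The main obstacle I expect is purely bookkeeping: enumerating the barks and carefully identifying, for each location of $u$, the specific minor in the Figures~\ref{fig:F4sub}--\ref{fig:F4+X} gallery that $G$ would contain. The structural idea is entirely inherited from Claim~\ref{cla:noint1}; the only substantive new point is that $f^1_2$ has a third neighbor ($f^2_3$) beyond $\{f^1, f^1_3\}$, so one additional ``sharing'' case must be handled by Lemma~\ref{lem:JTedge}, but this case collapses into the same triangle-contradiction used in Claim~\ref{cla:noint1}.
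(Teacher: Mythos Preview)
Your approach is essentially identical to the paper's: take $v$ as the neighbor of $\eta(f^1_2)$ on the segment, find an outside neighbor $u$ via Lemma~\ref{lem:JTstable} and 3-connectedness, exclude the barks of $f^1_1, f^1_4, f^2, f^2_1, f^2_2, f^2_4$ using the graphs in Figure~\ref{fig:F4sub}, and finish the two remaining segments through $\eta(f^1_2)$ with Lemma~\ref{lem:JTedge} and Lemma~\ref{lem:n3c}. Two small bookkeeping slips to clean up: do not list $f^2_3$ among the excluded barks (it is adjacent to $f^1_2$, and its bark overlaps your ``remaining'' case $\eta(f^1_2 f^2_3)-\eta(f^1_2)$), and drop the remark about an automorphism swapping $f^1_2$ and $f^1_3$---no such automorphism exists since these vertices have degrees $3$ and $4$ respectively.
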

\begin{proof}
	It suffices to prove the claim for \( e = f^1_2 f^1_3 \).
	
	Suppose, for contradiction, that \( \eta(e) \) has an internal vertex. Let \( v \) be the neighbor of \( \eta(f^1_2) \) in \( \eta(e) \). By Lemma~\ref{lem:JTstable} and the fact that $G$ is 3-connected, we deduce that \( v \) is adjacent to some vertex \( u \) outside \( \eta(e) \).
	
	Note that \( u \) cannot lie in the bark of \( f^1_1 \), \( f^1_4 \), \( f^2 \), \( f^2_1 \), \( f^2_2 \), or \( f^2_4 \); otherwise, \( G \) would contain a minor of the second, third, fourth, or fifth graph in Figure~\ref{fig:F4sub}.
	We conclude that \( u \) is in \( \eta(f^1 f^1_2) - \eta(f^1_2) \) or \( \eta(f^1_2 f^2_3) - \eta(f^1_2) \). Then, by Lemma~\ref{lem:JTedge}, \( u \) is adjacent to \( \eta(f^1_2) \) within these segments, contradicting Lemma~\ref{lem:n3c}.
\end{proof}

For \( i \in [2] \) and \( j \in \{1, 2, 4\} \), let \( y^i_j \) and \( z^i_j \) denote the neighbor of \( \eta(f^i) \) and that of \( \eta(f^i_j) \) in \( \eta(f^i f^i_j) \), respectively.
				
\begin{claim} \label{cla:int}
	Let \( i \in [2] \) and let \( e \in E(F_4) \) be \( f^i f^i_j \) with \( j \in \{1, 4\} \). Then \( z^i_j \) is adjacent to \( y^i_2 \) or \( y^i_{5-j} \).
\end{claim}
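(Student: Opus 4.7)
The plan is to prove the claim by contradiction, paralleling the strategies of Claims~\ref{cla:noint1} and~\ref{cla:noint2}. By the automorphisms of $F_4$ (which act transitively on $\{f^1_1, f^1_4, f^2_1, f^2_4\}$), it suffices to handle the case $i=1$, $j=1$: show that $z^1_1$ is adjacent to $y^1_2$ or $y^1_4$. If $\eta(f^1 f^1_1)$ has no internal vertex, then $z^1_1 = \eta(f^1)$, which is adjacent to $y^1_2$ by definition, so the claim holds. Otherwise $z^1_1$ is an internal vertex adjacent to $\eta(f^1_1)$ within the segment. Lemma~\ref{lem:JTstable} says the segment is an induced path, so the 3-connectedness of $G$ forces $z^1_1$ to have some neighbor $u$ outside $\eta(f^1 f^1_1)$; by Proposition~\ref{pro:F4span}, this $u$ lies on another segment of $\eta$. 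The goal is to show that $u$ must be $y^1_2$ or $y^1_4$.

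The quick eliminations use Lemma~\ref{lem:JTedge}, Claim~\ref{cla:noint1}, and Lemma~\ref{lem:n3c}. If $u \in \eta(f^1_1 f^1_3) - \eta(f^1_1)$, Claim~\ref{cla:noint1} gives $u = \eta(f^1_3)$; together with the direct edges $\eta(f^1_1)\eta(f^1_3)$ and $\eta(f^1_1)z^1_1$ this yields a triangle, contradicting Lemma~\ref{lem:n3c}. If $u$ is an internal vertex of $\eta(f^1_1 f^2_4)$, Lemma~\ref{lem:JTedge} applied with the degree-3 vertex $f^1_1$ (and $e_1 = f^1_1 f^2_4$, $e_2 = f^1 f^1_1$) forces $u$ to be adjacent to $\eta(f^1_1)$, producing another triangle. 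If $u$ is an internal vertex of $\eta(f^1 f^1_2)$ or of $\eta(f^1 f^1_4)$, Lemma~\ref{lem:JTedge} with the degree-3 vertex $f^1$ forces $u = y^1_2$ or $u = y^1_4$, respectively, giving the desired conclusion.

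The remaining possibilities for $u$ are the branch endpoints $\eta(f^1_2), \eta(f^1_3), \eta(f^1_4), \eta(f^2_4)$ (when the corresponding segments have internal vertices so these are not the target $y^1_2, y^1_4$), and positions on segments not incident to $f^1$ or $f^1_1$ (for example $\eta(f^1_2 f^1_3)$, $\eta(f^1_3 f^1_4)$, $\eta(f^1_2 f^2_3)$, $\eta(f^1_3 f^2_2)$, $\eta(f^1_4 f^2_1)$, or any segment wholly on the $f^2$-side). For each such location, the structure of $\eta(F_4)$ together with the extra edge $z^1_1 u$ embeds into $G$ a subdivision of some small augmentation of $F_4$ (obtained by adding an edge, adding a degree-3 vertex, or subdividing an edge and adding an edge). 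The auxiliary lemmas of Section~\ref{sec:DEEF}---in particular Lemmas~\ref{lem:F4+I}, \ref{lem:F4+H}, \ref{lem:F4+Y}, \ref{lem:F4+X}, and \ref{lem:F4+T}---together with the explicit branch-set constructions in Figures~\ref{fig:F4sub}, \ref{fig:F4+Y}, \ref{fig:F4+X}, show that each such augmentation contains $K_{3,4}$ or $\mathfrak{Q}^+$ as a minor, contradicting $G$'s being a counterexample to Conjecture~\ref{con:DM}.

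The main obstacle is the boundary case $u = \eta(f^1_2)$ with $y^1_2 \neq \eta(f^1_2)$ (and the symmetric case for $f^1_4$), since Lemma~\ref{lem:JTedge} does not apply when $u$ is a branch vertex rather than internal to a segment. Here the edge $z^1_1 \eta(f^1_2)$ together with the two subpaths of $\eta(f^1 f^1_1)$ split at $z^1_1$ and the segment $\eta(f^1 f^1_2)$ form a theta between $\eta(f^1)$ and $\eta(f^1_2)$. The idea is either to reroute the JT-subdivision through this theta, reducing to one of the earlier cases, or to exhibit a direct minor of $K_{3,4}$ or $\mathfrak{Q}^+$ via explicit branch sets, in the spirit of the proof of Lemma~\ref{lem:F4+T}. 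This latter step requires care because the rerouted subdivision need not be JT, so one must verify that every vertex of $G$ is accounted for in the resulting minor construction.
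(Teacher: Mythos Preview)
Your overall strategy matches the paper's proof: reduce to $i=1$, $j=1$ by symmetry, dispose of the case where $\eta(f^1f^1_1)$ is a single edge, and then use Lemma~\ref{lem:JTstable} and 3-connectedness to get a neighbour $u$ of $z^1_1$ outside the segment, ruling out all locations for $u$ except $y^1_2$ and $y^1_4$. The eliminations via Claims~\ref{cla:noint1},~\ref{cla:noint2}, Lemma~\ref{lem:n3c}, and the forbidden-minor lemmas are also in the same spirit (the paper phrases the latter as exclusions of the barks of $f^2,f^2_1,f^2_2,f^2_3$, citing specific graphs from Figures~\ref{fig:F4sub} and~\ref{fig:F4+X}, which is just a tidier way of organising the same case analysis).

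However, your ``main obstacle'' is illusory and stems from a misreading of Lemma~\ref{lem:JTedge}. That lemma requires the vertex on the $\eta(e_2)$ side to be \emph{internal}, but places no such restriction on the vertex $v\in\eta(e_1)-\eta(u)$; $v$ may perfectly well be the branch vertex at the far end of $\eta(e_1)$. In the situation you flag, take $u=f^1$, $e_1=f^1f^1_2$, $e_2=f^1f^1_1$. The vertex $z^1_1$ is internal to $\eta(e_2)$ by hypothesis, and the putative edge joins $z^1_1$ to $v=\eta(f^1_2)\in\eta(e_1)-\eta(f^1)$. Lemma~\ref{lem:JTedge} then forces $\eta(f^1f^1_2)[\eta(f^1),\eta(f^1_2)]$ to have a single edge, i.e.\ $\eta(f^1_2)=y^1_2$, contradicting your assumption $y^1_2\neq\eta(f^1_2)$. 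So this case is disposed of immediately, and the same applies to $u=\eta(f^1_4)$ and to $u=\eta(f^2_4)$ (via the degree-3 vertex $f^1_1$). No rerouting of the JT-subdivision or ad hoc minor construction is needed; once you read Lemma~\ref{lem:JTedge} correctly, your argument collapses to exactly the paper's short proof.
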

					
\begin{proof}
	It suffices to prove the claim for \( e = f^1 f^1_1 \). Clearly, it holds when \( \eta(e) \) has no internal vertex, that is, \( z^1_1 = \eta(f^1) \). So we assume \( z^1_1 \neq \eta(f^1) \) is an internal vertex of \( \eta(e) \).
	
	By Lemma~\ref{lem:JTstable} and the fact that \( G \) is 3-connected, we have that \( z^1_1 \) is adjacent to some vertex \( u \) outside \( \eta(e) \).
	
	Note that \( u \) cannot lie in the bark of \( f^2 \), \( f^2_1 \), \( f^2_2 \), or \( f^2_3 \); otherwise, \( G \) would contain a minor of the third or sixth graph in Figure~\ref{fig:F4sub}, or the first or second graph in Figure~\ref{fig:F4+X}. Also, it follows from Claims~\ref{cla:noint1} and~\ref{cla:noint2} that neither \( \eta(f^1_2 f^1_3) \) nor \( \eta(f^1_3 f^1_4) \) contains an internal vertex.
	
	If \( u \) belongs to \( \eta(f^1_1 f^1_3) - \eta(f^1_1) \) or \( \eta(f^1_1 f^2_4) - \eta(f^1_1) \), then, by Lemma~\ref{lem:JTedge}, \( u \) is adjacent to \( \eta(f^1_1) \) within these segments, contradicting Lemma~\ref{lem:n3c}.
	
	We conclude that \( u \) is in \( \eta(f^1 f^1_2) - \eta(f^1) \) or \( \eta(f^1 f^1_4) - \eta(f^1) \). By Lemma~\ref{lem:JTedge}, \( u \) is either \( y^1_2 \) or \( y^1_4 \). This proves the claim.
\end{proof}
						
We are now ready to construct a Hamilton cycle of \( G \).
		
By Claim~\ref{cla:int}, $z^1_4$ and $z^2_1$ are adjacent to $u^1 \in \{y^1_1, y^1_2\}$ and $u^2 \in \{y^2_2, y^2_4\}$, respectively.
						
	By Claims~\ref{cla:noint1},~\ref{cla:noint2}, and the fact that $\eta(F_4)$ spans $G$, we conclude that the cycle obtained from the union of $\eta(f^1 f^1_1)$, $\eta(f^1_1 f^2_4)$, $\eta(f^2_4 f^2)$, $\eta(f^2 f^2_2)$, $\eta(f^2_2 f^1_3)$, $\eta(f^1_3 f^1_4)$, $\eta(f^1_4 f^2_1)$, $\eta(f^2_1 f^2_3)$, $\eta(f^2_3 f^1_2)$, and $\eta(f^1_2 f^1)$, by replacing the edges $\eta(f^1) u^1$ and $\eta(f^2) u^2$ with $\eta(f^1 f^1_4) - \eta(f^1_4)$, $z^1_4 u^1$, $\eta(f^2 f^2_1) - \eta(f^2_1)$ and $z^2_1 u^2$, is a Hamilton cycle of $G$, which is a contradiction.
	
	This proves the conjecture of Ding and Marshall.
						
\section{Every 4-connected non-hamiltonian graph contains a $K_{3,4}$ minor}\label{sec:DMQ}

In this section, we affirmatively answer Ding and Marshall's question.

Throughout, we assume that \( G \) is a counterexample to Question~\ref{que:DM}, meaning that \( G \) is a 4-connected non-hamiltonian graph without \( K_{3,4} \) minors.

By Thomas and Yu's theorem~\cite{Thomas1994}, \( G \) is non-projective-planar. Therefore, applying the result of Robertson, Seymour, and Thomas~\cite{Ding2014}, $G$ contains a minor of one of the 23 graphs listed in Appendix~\ref{sec:A3}. As $G$ does not have a $K_{3,4}$ minor, it must have a minor of $D_{17}$, $E_{20}$, $E_{22}$, or $F_4$.

Since $G$ is non-hamiltonian while $D_{17}$ is hamiltonian, having a $D_{17}$ minor in $G$ implies that $|V(G)| > |V(D_{17})|$. It follows from Proposition~\ref{pro:D17toE20} and Proposition~\ref{pro:E22} that $G$ contains a minor of $E_{20}$ or $F_4$.

\smallskip

Suppose $G$ contains no $F_4$ minor. By Lemma~\ref{lem:E20sub} and Proposition~\ref{pro:E20span}, $G$ contains a spanning JT-subdivision $\eta_1(E_{20})$ of $E_{20}$. We prove some properties concerning the subdivision $\eta_1(E_{20})$.

\begin{claim} \label{cla:noint3}
	Let \( e \in E(E_{20}) \) be one of \( e^0 e^1_1 \), \( e^0 e^1_2 \), or \( e^0 e^1_3 \). Then the segment \( \eta_1(e) \) has no internal vertex.
\end{claim}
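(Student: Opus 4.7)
The plan is to mimic the strategy of Claims~\ref{cla:noint1} and~\ref{cla:noint2}. By the $6$-fold symmetry of $E_{20}$ permuting $\{e^1_1, e^1_2, e^1_3\}$, it suffices to treat $e = e^0 e^1_1$. Suppose for contradiction that $\eta_1(e^0 e^1_1)$ has an internal vertex, and let $v$ be the neighbor of $\eta_1(e^0)$ on this segment. Since $G$ and $E_{20}$ are both internally $4$-connected and, by Lemma~\ref{lem:E20sub}, every graph obtained from $E_{20}$ by splitting a vertex has a $K_{3,4}$ or $F_4$ minor, $G$ (being free of both) contains no minor of such a graph; hence Lemma~\ref{lem:JTstable} applies and every segment of $\eta_1$ is an induced path in $G$. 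Combined with $4$-connectivity of $G$, which forces $\deg(v) \ge 4$, this yields at least two neighbors of $v$ outside $\eta_1(e^0 e^1_1)$.

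Fix such an outside neighbor $u$ of $v$; the proof then rules out every possible location of $u$ using the lemmas of Section~\ref{sec:DEEF}. If $u$ is an internal vertex of a segment $\eta_1(e')$ whose edge $e'$ is disjoint from $e^0 e^1_1$ in $E_{20}$, the subgraph $\eta_1(E_{20}) \cup \{vu\}$ is precisely the handle-construction from Lemma~\ref{lem:E20+H}, which has a $K_{3,4}$ or $F_4$ minor, a contradiction. If $u = \eta_1(w)$ for some branch vertex $w \notin \{e^0, e^1_1\}$, one contracts $v$ into $\eta_1(e^0)$ (or into $\eta_1(e^1_1)$) to expose a minor of $E_{20} + e^0 w$ (or $E_{20} + e^1_1 w$); Lemma~\ref{lem:E20+I} rules out the choices $w \in \{e^3_1, e^3_2, e^3_3, e^4\}$, and the remaining choices are eliminated by bringing in the second outside neighbor of $v$ and invoking Lemmas~\ref{lem:E20+Y} and~\ref{lem:E20+X} on the resulting configurations. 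If $u$ lies in the interior of a segment sharing an endpoint with $e^0 e^1_1$, then — crucially exploiting that $e^0$ has degree $3$ in $E_{20}$ — Lemma~\ref{lem:JTedge} forces $u$ to be adjacent to the shared endpoint, after which a further contraction again reduces to a configuration forbidden by Lemma~\ref{lem:E20+Y} or~\ref{lem:E20+X}.

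Each subcase thus produces the forbidden $K_{3,4}$ or $F_4$ minor, completing the contradiction. The main obstacle is managing the near-endpoint subcases where $u$ lies on a segment incident to $e^0$ or $e^1_1$: there the immediate contraction produces either an edge already present in $E_{20}$ or no new edge at all, so one must combine both outside neighbors of $v$ (together with the constraints supplied by Lemma~\ref{lem:JTedge} at the degree-$3$ vertex $e^0$) before any forbidden-minor lemma applies. Symmetry on $\{e^1_1, e^1_2, e^1_3\}$ substantially reduces the bookkeeping, and the lemmas~\ref{lem:E20+I}--\ref{lem:E20+H} together cover every remaining configuration.
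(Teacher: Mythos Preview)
Your proposal contains a factual error that undermines the key subcase. You write that the argument ``crucially exploit[s] that $e^0$ has degree $3$ in $E_{20}$,'' but $e^0$ has degree~$4$: its neighbors are $e^1_1,e^1_2,e^1_3,e^2$. In fact \emph{both} endpoints of the segment $e^0e^1_1$ have degree~$4$ in $E_{20}$, so Lemma~\ref{lem:JTedge} (which requires the common endpoint to have degree three) is unavailable at either end. This is exactly the subcase you flag as the ``main obstacle'' --- where $u$ is internal to a segment sharing an endpoint with $e^0e^1_1$ --- and your proposed resolution collapses there. The subsequent appeals to Lemmas~\ref{lem:E20+Y} and~\ref{lem:E20+X} are also left as unspecified ``resulting configurations,'' so even the branch-vertex subcases are not actually closed.

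The paper's proof is both simpler and avoids this trap entirely. It takes an arbitrary internal vertex $v$ of $\eta_1(e^0e^1_1)$ and a \emph{single} outside neighbor $u$ (only $3$-connectedness is used). The observation is that every vertex of $\eta_1(E_{20})$ outside the segment $\eta_1(e^0e^1_1)$ lies in the bark of some $w\in V(E_{20})\setminus\{e^0,e^1_1\}$; indeed, an internal vertex of any other segment is in the bark of its non-$e^0$, non-$e^1_1$ endpoint. Contracting $u$ toward $\eta_1(w)$ then exhibits a minor of a specific graph from Figure~\ref{fig:E20sub} (the first, fifth, or sixth), each of which has a $K_{3,4}$ or $F_4$ minor. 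No second neighbor, no Lemma~\ref{lem:JTedge}, and none of Lemmas~\ref{lem:E20+Y}--\ref{lem:E20+H} are needed. The bark formulation is what lets the adjacent-segment subcase be absorbed into the same one-line argument rather than requiring separate treatment.
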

\begin{proof}
	It suffices to prove the claim for \( e = e^0 e^1_1 \).
	
	Suppose, to the contrary, that \( \eta_1(e) \) has an internal vertex $v$. By Lemma~\ref{lem:JTstable} and the fact that \( G \) is 3-connected, we conclude that \( v \) is adjacent to some vertex \( u \) outside \( \eta_1(e) \).

	The vertex \( u \) is not in the bark of any vertex other than $e^0$ and $e^1_1$; otherwise, \( G \) would contain a minor of the first, fifth, or sixth graph in Figure~\ref{fig:E20sub} and hence a minor of $K_{3,4}$ or $F_4$.
	
	Thus, \( u \) must be in \( \eta_1(e) \), which is impossible.
\end{proof}

\begin{claim} \label{cla:noint4}
	Let \( e \in E(E_{20}) \) be one of \( e^1_1 e^1_2 \), $e^1_2 e^1_3$, or \( e^1_3 e^1_1 \). Then the segment \( \eta_1(e) \) has no internal vertex.
\end{claim}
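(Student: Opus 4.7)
By the $S_3$-automorphism of $E_{20}$ permuting $\{e^1_1, e^1_2, e^1_3\}$, it suffices to prove the claim for $e = e^1_1 e^1_2$. The plan is to mirror the proof of Claim~\ref{cla:noint3}. Suppose for contradiction that $\eta_1(e)$ has an internal vertex, and let $v$ be the neighbor of $\eta_1(e^1_1)$ on $\eta_1(e)$. Lemma~\ref{lem:JTstable} makes $\eta_1(e)$ an induced path, so by 3-connectedness of $G$ there is a neighbor $u$ of $v$ lying outside $\eta_1(e)$.

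The core of the argument is to rule out every possible bark in which $u$ can lie. If $u$ lies in the bark of some $w \in V(E_{20}) \setminus \{e^1_1, e^1_2\}$, then contracting the bark portion between $u$ and $\eta_1(w)$ realises, as a minor of $G$, the graph obtained from $E_{20}$ by subdividing the edge $e^1_1 e^1_2$ with $v$ and adding the edge $vw$. The task is to match this configuration against a known forbidden minor. For $w \in \{e^0, e^1_3\}$, the triple $\{e^1_1, e^1_2, w\}$ is a triangle in $E_{20}$, so Lemma~\ref{lem:E20+Y} yields a $K_{3,4}$ or $F_4$ minor, contradicting our standing assumptions. For $w \in \{e^3_1, e^3_2, e^3_3, e^4\}$, contracting $v$ into $\eta_1(e^1_1)$ (or $\eta_1(e^1_2)$) produces $E_{20}$ together with an extra edge; the resulting graph contains a split of $e^1_1$ or $e^1_2$ in $E_{20}$, whose minor content follows from Lemma~\ref{lem:E20sub} via the graphs illustrated in Figure~\ref{fig:E20sub}. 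Finally, if $u$ lies in the bark of $e^1_1$ or $e^1_2$ but $u \notin \{\eta_1(e^1_1), \eta_1(e^1_2)\}$, then by Claim~\ref{cla:noint3} the segments $\eta_1(e^0 e^1_i)$ have no internal vertices, so $u$ is an internal vertex of some other segment incident to $e^1_1$ or $e^1_2$; Lemma~\ref{lem:JTedge} then forces $u$ to be the immediate neighbor of $\eta_1(e^1_1)$ (or $\eta_1(e^1_2)$) on that segment, and the configuration collapses to one of the earlier subcases.

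The principal obstacle is the case $w = e^2$: the neighborhood $\{e^1_1, e^1_2, e^2\}$ has exactly one edge (namely $e^1_1 e^1_2$), so Lemma~\ref{lem:E20+Y} does not apply directly and the resulting configuration is not a pure split of any single vertex of $E_{20}$. The plan to handle this case is to invoke the 4-connectedness of $G$ to extract a second external neighbor $u'$ of $v$ distinct from $u$; a case analysis on the bark containing $u'$ then upgrades the configuration either to a four-neighbor extension of $E_{20}$, where Lemma~\ref{lem:E20+X} applies, or to the two-subdivided-independent-edges configuration of Lemma~\ref{lem:E20+H}. In either case a $K_{3,4}$ or $F_4$ minor of $G$ emerges, contradicting our running assumptions. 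Having ruled out every bark, $u$ must lie in $\eta_1(e)$, which is the desired contradiction; thus $\eta_1(e)$ has no internal vertex.
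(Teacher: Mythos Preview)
Your outline misidentifies the genuine obstruction and contains several technical errors.

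\textbf{Wrong critical case.} You single out $w=e^2$ as the ``principal obstacle'', but in fact a single external neighbour in the bark of $e^2$ already forces an $F_4$ or $K_{3,4}$ minor (this is exactly how the paper dispatches it, via the first graph of Figure~\ref{fig:E20toF4e}). The case that truly resists a one-neighbour argument is $w=e^4$: subdividing $e^1_1e^1_2$ and joining the new vertex only to $e^4$ yields the fifth graph in Figure~\ref{fig:E20toF4e}, which contains $\mathfrak{Q}^+$ but neither $K_{3,4}$ nor $F_4$. Since in this section the standing hypothesis forbids only $K_{3,4}$ and $F_4$, nothing is contradicted. You have lumped $e^4$ in with $e^3_1,e^3_2,e^3_3$ and claimed it is handled by Lemma~\ref{lem:E20sub}; it is not. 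The paper's remedy is to invoke $4$-connectedness \emph{from the start} to obtain two external neighbours $u_1,u_2$; after ruling out all barks except $e^1_1,e^1_2,e^4$, both $u_1$ and $u_2$ are forced to equal $\eta_1(e^4)$, which is absurd.

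\textbf{Technical errors.} (i) Lemma~\ref{lem:E20+Y} requires the full $E_{20}$ plus a pendant vertex; your configuration has $e^1_1e^1_2$ subdivided, so the edge is absent and the lemma does not apply as stated. (ii) The claim that ``$E_{20}$ together with an extra edge contains a split of $e^1_1$ or $e^1_2$'' is impossible on vertex count: splits have ten vertices, $E_{20}$ plus an edge has nine. Moreover, for $w=e^3_1$ contracting $v$ into $\eta_1(e^1_1)$ gives back $E_{20}$ itself (the edge $e^1_1e^3_1$ is already present), so no progress is made. (iii) Lemma~\ref{lem:JTedge} requires the pivot vertex to have degree three in $H$, but $e^1_1$ and $e^1_2$ have degree four in $E_{20}$; your appeal to it in the ``bark of $e^1_1$ or $e^1_2$'' subcase is invalid. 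This subcase is in fact vacuous once the other barks are excluded, since every internal vertex of a segment through $e^1_1$ or $e^1_2$ (other than $\eta_1(e)$ itself) also lies in the bark of $e^0$, $e^1_3$, or some $e^3_i$.
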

\begin{proof}
	It suffices to prove the claim for \( e = e^1_1 e^1_2 \).
	
	Suppose, to the contrary, that \( \eta_1(e) \) has an internal vertex \( v \). By Lemma~\ref{lem:JTstable} and the fact that \( G \) is 4-connected, we conclude that \( v \) is adjacent to two vertices \( u_1 \) and \( u_2 \) outside \( \eta_1(e) \).
	
	Note that for each \( i \in [2] \), the vertex \( u_i \) is not in the bark of any vertex other than \( e^1_1 \), \( e^1_2 \), or \( e^4 \); otherwise, \( G \) would contain a minor of one of the first, second, third, fourth, or sixth graphs in Figure~\ref{fig:E20toF4e}, and hence a minor of \( K_{3,4} \) or \( F_4 \).
	
	Thus, both \( u_1 \) and \( u_2 \) must be the vertex \( \eta_1(e^4) \), which is absurd.
\end{proof}

For \( i \in [3] \), let \( u^1_i \) and \( u^2_i \) denote the neighbors of \( \eta_1(e^3_i) \) in \( \eta_1(e^1_i e^3_i) \) and \( \eta_1(e^2 e^3_i) \), respectively. Similarly, let \( v_i \) be the neighbor of \( \eta_1(e^2) \) in \( \eta_1(e^2 e^3_i) \), and let \( w_i \) be the neighbor of \( \eta_1(e^4) \) in \( \eta_1(e^4 e^3_i) \).

\begin{claim} \label{cla:intj}
	Let \( i \in [3] \) and \( e = e^2 e^3_i \in E(E_{20}) \). Suppose \( \eta_1(e) \) has an internal vertex \( v \). Then \( v \) is adjacent to both \( u^1_i \) and \( u^4_i \).
\end{claim}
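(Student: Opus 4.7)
The plan is to mirror the arguments in Claims~\ref{cla:noint3} and~\ref{cla:noint4}. Since $\eta_1(E_{20})$ is a spanning JT-subdivision and $v$ lies in the interior of the segment $\eta_1(e^2 e^3_i)$, its two segment-neighbours account for only two of its adjacencies; as $G$ is $4$-connected, $v$ must have at least two further neighbours $u_1,u_2$ outside $\eta_1(e^2 e^3_i)$. I would prove that every outside neighbour of $v$ belongs to the two-element set $\{u^1_i,u^4_i\}$ (where $u^4_i$ denotes the neighbour of $\eta_1(e^3_i)$ in $\eta_1(e^4 e^3_i)$, defined in the same style as $u^1_i$ and $u^2_i$). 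Since $u^1_i$ and $u^4_i$ lie in internally disjoint segments they are distinct, so this would force $\{u_1,u_2\}=\{u^1_i,u^4_i\}$ and yield the claim.

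The main engine is Lemma~\ref{lem:JTedge} applied with the degree-$3$ vertex $e^3_i$ of $E_{20}$, $e_2=e^2 e^3_i$, and $e_1\in\{e^1_i e^3_i,\ e^4 e^3_i\}$: any chord from $v$ to a vertex on $\eta_1(e^1_i e^3_i)\setminus\{\eta_1(e^3_i)\}$ (resp.\ $\eta_1(e^4 e^3_i)\setminus\{\eta_1(e^3_i)\}$) must be incident to $u^1_i$ (resp.\ $u^4_i$). This handles the two ``good'' locations, and it remains to rule out every other potential position for an outside neighbour $u_j$ of $v$, in each case via a minor argument drawn from Section~\ref{sec:DEEF}. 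If $u_j$ lies in the bark of $e^0$ or of some $e^1_k$, then by Claims~\ref{cla:noint3} and~\ref{cla:noint4} these barks are singletons, so $u_j\in\{\eta_1(e^0),\eta_1(e^1_k)\}$. Contracting an appropriate half of $\eta_1(e^2 e^3_i)$ to identify $v$ with either $\eta_1(e^2)$ or $\eta_1(e^3_i)$ then exhibits $G$ as containing $E_{20}$ augmented by a single additional edge incident to $e^0$ or $e^1_k$; Lemma~\ref{lem:E20+I} (or the independence half of Lemma~\ref{lem:E20+Y}, after viewing $v$ as a new vertex with three neighbours) then produces a forbidden $K_{3,4}$ or $F_4$ minor. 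The cases $u_j\in\{\eta_1(e^2),\eta_1(e^3_i)\}$ are immediate from simpleness of $G$ and Lemma~\ref{lem:n3c}.

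The main obstacle is the remaining case, in which $u_j$ lies in a segment not incident to $e^3_i$, such as $\eta_1(e^2 e^3_k)$, $\eta_1(e^4 e^3_k)$, or $\eta_1(e^1_k e^3_k)$ with $k\neq i$, or in the bark of $e^2$ or $e^4$. Here Lemma~\ref{lem:JTedge} cannot be invoked because neither endpoint of the host segment of $u_j$ is $e^3_i$, so a minor must be extracted by a careful \emph{double} contraction: collapse halves of both $\eta_1(e^2 e^3_i)$ and the target segment so that the chord $v u_j$ becomes a single added edge between two existing vertices of $E_{20}$, after which Lemma~\ref{lem:E20+H}, Lemma~\ref{lem:E20+I}, or Lemma~\ref{lem:E20+Y} supplies the forbidden $K_{3,4}$ or $F_4$ minor. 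The symmetry of $E_{20}$ under permutations of $\{e^1_1,e^1_2,e^1_3\}$ and the involution swapping $e^2$ with $e^4$ keeps the number of sub-cases manageable. Once every non-good location has been excluded, both $u_1$ and $u_2$ must lie in $\{u^1_i,u^4_i\}$, giving $v$ the two desired adjacencies.
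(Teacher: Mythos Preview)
Your overall strategy matches the paper's: obtain two off-segment neighbours of $v$ via Lemma~\ref{lem:JTstable} and $4$-connectivity, confine them to the segments $\eta_1(e^1_i e^3_i)$ and $\eta_1(e^4 e^3_i)$ by excluding all other positions through forbidden-minor arguments, and then pin each down to $u^1_i$ or $u^4_i$ with Lemma~\ref{lem:JTedge}. The paper carries out the exclusion step in one stroke, observing that if $u_j$ lies in the bark of any vertex outside $\{e^1_i, e^2, e^3_i, e^4\}$ then $G$ contains one of three specific figure-graphs (the fourth or fifth in Figure~\ref{fig:E20sub}, or the fifth in Figure~\ref{fig:E20+Y}) as a minor; your plan is a finer case analysis to the same end.

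That said, your write-up contains several slips. First, the barks of $e^0$ and of $e^1_k$ are \emph{not} singletons: Claims~\ref{cla:noint3} and~\ref{cla:noint4} say nothing about the segments $\eta_1(e^0 e^2)$ and $\eta_1(e^1_k e^3_k)$, whose internal vertices still belong to those barks and must be handled (you do sweep them into the later ``main obstacle'' paragraph, but the stated reason for dismissing those barks is wrong). Second, Lemma~\ref{lem:n3c} is inapplicable here: in Section~\ref{sec:DMQ} the graph $G$ is a counterexample to Question~\ref{que:DM}, not a minor-minimal counterexample to Conjecture~\ref{con:DM}. This is harmless only because $\eta_1(e^2)$ and $\eta_1(e^3_i)$ lie in $\eta_1(e)$ and are already excluded by the phrase ``outside $\eta_1(e)$''. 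Third, there is no automorphism of $E_{20}$ swapping $e^2$ with $e^4$: they have degrees $4$ and $3$ respectively, so the symmetry reduction you invoke is unavailable and the case list is a little longer than you suggest. None of these errors is fatal to the approach, but each needs correcting before the argument is complete.
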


\begin{proof}
	By Lemma~\ref{lem:JTstable} and the fact that $G$ is 4-connected, we conclude that \( v \) is adjacent to two vertices \( u_1 \) and \( u_2 \) outside \( \eta_1(e) \).
	
	For each \( j \in [2] \), the vertex \( u_j \) is not in the bark of any vertex other than \( e^1_i \), \( e^2 \), \( e^3_i \), or \( e^4 \); otherwise, \( G \) would contain a minor of the fourth or fifth graph in Figure~\ref{fig:E20sub} or the fifth graph in Figure~\ref{fig:E20+Y}, and hence a minor of \( K_{3,4} \) or \( F_4 \).
	
	This implies that $u_j$ is in \( \eta_1(e^1_i e^3_i) - \eta_1(e^3_i) \) or \( \eta_1(e^4 e^3_i) - \eta_1(e^3_i) \).
	
	By Lemma~\ref{lem:JTedge}, if \( u_j \) lies in \( \eta_1(e^1_i e^3_i) - \eta_1(e^3_i) \), then \( u_j = u^1_i \), and if \( u_j \) lies in \( \eta_1(e^4 e^3_i) - \eta_1(e^3_i) \), then \( u_j = u^4_i \). Thus, we conclude that \( u_1 \) and \( u_2 \) are precisely \( u^1_i \) and \( u^4_i \).
\end{proof}

\begin{claim} \label{cla:infj}
	Every internal vertex of \( \eta_1(e^3_3 e^4) \) is adjacent to \( w_1 \), \( w_2 \), \( u^1_3 \), or \( u^2_3 \).
\end{claim}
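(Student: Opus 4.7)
The plan is to mirror the case analysis used in the proof of Claim~\ref{cla:intj}, examining each possible location of an outside neighbor $u$ of an internal vertex $v$ of $\eta_1(e^3_3 e^4)$. By 4-connectedness of $G$, the vertex $v$ (which has only two neighbors along the segment) has at least two neighbors outside $\eta_1(e^3_3 e^4)$, and the approach is to show that every such outside neighbor must in fact lie in $\{w_1, w_2, u^1_3, u^2_3\}$.

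First I would eliminate $u$ being in the bark of $e^0$: contracting the sub-path of $\eta_1(e^3_3 e^4)$ from $v$ to either $\eta_1(e^3_3)$ or $\eta_1(e^4)$ produces as a minor $E_{20}$ augmented by an edge joining $e^0$ to $e^3_3$ or $e^4$, which by Lemma~\ref{lem:E20+I} contains $K_{3,4}$, contradicting the standing assumption. Second, if $u$ is internal to one of $\eta_1(e^1_3 e^3_3)$, $\eta_1(e^2 e^3_3)$, $\eta_1(e^4 e^3_1)$, or $\eta_1(e^4 e^3_2)$, then since $e^3_3$ and $e^4$ are both vertices of degree three in $E_{20}$, Lemma~\ref{lem:JTedge} pins $u$ down to $u^1_3$, $u^2_3$, $w_1$, or $w_2$ respectively, which yields the claim. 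Third, by Claims~\ref{cla:noint3} and~\ref{cla:noint4} the segments $\eta_1(e^0 e^1_j)$ and $\eta_1(e^1_i e^1_j)$ contain no internal vertices, so the remaining possibilities for $u$ internal to some segment are $u$ internal to $\eta_1(e^1_i e^3_i)$ or $\eta_1(e^2 e^3_i)$ with $i \in \{1, 2\}$. Each such segment is independent from $e^3_3 e^4$ in $E_{20}$, so (after contracting so that $v$ and $u$ are the only internal vertices of their respective segments) Lemma~\ref{lem:E20+H} yields a $K_{3,4}$ or $F_4$ minor in $G$, contradiction.

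It remains to rule out $u = \eta_1(v^*)$ for $v^* \in \{e^1_1, e^1_2, e^1_3, e^2, e^3_1, e^3_2\}$. Here I would treat $v$ together with its two segment edges and its two outside edges as a new vertex of degree four in the minor that subdivides $e^3_3 e^4$ and is additionally adjacent to the two branch vertices $v^*_1, v^*_2$ corresponding to the outside neighbors. Contracting $v$ into $\eta_1(e^3_3)$ or $\eta_1(e^4)$ then produces $E_{20}$ with two new edges attached at $e^3_3$ or $e^4$; by choosing the contraction direction for each outside neighbor independently and invoking Lemmas~\ref{lem:E20+Y} and~\ref{lem:E20+X} for the configurations that arise, the augmented graph contains a $K_{3,4}$ or $F_4$ minor in every sub-case.

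The main obstacle is this final branch-vertex case, which unlike the internal-vertex cases is not dispatched by a single lemma. It requires a careful enumeration of the pairs $(v^*_1, v^*_2)$ and, for several of them, an explicit construction of the $K_{3,4}$ or $F_4$ minor rather than a direct appeal to a ready-made lemma. A further subtlety is that after subdividing $e^3_3 e^4$ at $v$ the direct edge $e^3_3 e^4$ is no longer present in the minor produced, so any minor we exhibit must be realizable via the length-two path $e^3_3 v e^4$ whenever that edge would otherwise be needed.
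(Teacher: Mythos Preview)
Your strategy is correct and would eventually succeed, but you are over-complicating the final stage, and the paper's route is noticeably shorter.

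First, Lemma~\ref{lem:JTedge} is stated for any vertex in $\eta(e_1)-\eta(\cdot)$, not only for internal vertices of $\eta(e_1)$. Applied with the degree-three hubs $e^3_3$ and $e^4$, it therefore already disposes of the cases $u=\eta_1(e^1_3)$, $\eta_1(e^2)$, $\eta_1(e^3_1)$, $\eta_1(e^3_2)$: the lemma forces the corresponding adjacent segment to have length one, so that this branch vertex \emph{is} $u^1_3$, $u^2_3$, $w_1$, or $w_2$, which is exactly what the claim asserts. Hence the only branch vertices genuinely left over are $e^1_1$ and $e^1_2$, and your ``main obstacle'' shrinks considerably.

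Second, for those two the paper does not pass to a pair of outside neighbours at all. It works one neighbour at a time via the bark concept: if $u$ lies in the bark of $e^1_1$ (symmetrically $e^1_2$), then $G$ contains as a minor a single fixed graph---the fourth graph in Figure~\ref{fig:E20+Y}---which was already checked to contain $K_{3,4}$. Together with the bark of $e^0$ (your Lemma~\ref{lem:E20+I} step) and Lemma~\ref{lem:E20+H} for the two segments $\eta_1(e^2 e^3_1)$ and $\eta_1(e^2 e^3_2)$, this shows that \emph{every} outside neighbour of $v$ lies in $\{w_1,w_2,u^1_3,u^2_3\}$; only $3$-connectedness is invoked to guarantee that one such neighbour exists. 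Your fallback to two neighbours plus Lemma~\ref{lem:E20+X} would work after the explicit case-checks you anticipate, but the paper's single-neighbour argument sidesteps the missing-$e^3_3 e^4$-edge subtlety entirely. (Your use of Lemma~\ref{lem:E20+H} for internal vertices of $\eta_1(e^1_i e^3_i)$ is also fine; the paper simply absorbs those into the bark of $e^1_i$.)
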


\begin{proof}
	Let \( v \) be an internal vertex of \( \eta_1(e^3_3 e^4) \). By Lemma~\ref{lem:JTstable} and the fact that \( G \) is 3-connected, we have that \( v \) is adjacent to some vertex \( u \) outside \( \eta_1(e^3_3 e^4) \).
	
	The vertex \( u \) is not in the bark of \( e^0 \), \( e^1_1 \), or \( e^1_2 \); otherwise, \( G \) would contain a minor of the fifth graph in Figure~\ref{fig:E20sub} or the fourth graph in Figure~\ref{fig:E20+Y}, and hence a minor of \( K_{3,4} \).
	
	By Lemma~\ref{lem:E20+H}, \( u \) is not an internal vertex of \( \eta_1(e^2 e^3_1) \) or \( \eta_1(e^2 e^3_2) \).
	
	Therefore, \( u \) is in \( \eta_1(e^4 e^3_1) - \eta_1(e^4) \), \( \eta_1(e^4 e^3_2) - \eta_1(e^4) \), \( \eta_1(e^2 e^3_3) - \eta_1(e^3_3) \), or \( \eta_1(e^1_3 e^3_3) - \eta_1(e^3_3) \). Applying Lemma~\ref{lem:JTedge}, we conclude that \( u \) is one of \( w_1 \), \( w_2 \), \( u^1_3 \), or \( u^2_3 \).
\end{proof}

Claim~\ref{cla:intj} implies that $u^1_1 v_1, u^1_2 v_2 \in E(G)$.

Claim~\ref{cla:infj} implies that the path \( \eta_1(e^4 e^3_3) \) contains two adjacent vertices \( x_1, x_2 \) such that \( x_1 \) is not in \( \eta_1(e^3_3 e^4)[x_2, \eta_1(e^4)] \), \( x_1 \) is adjacent to \( u^{i_1}_3 \), and \( x_2 \) is adjacent to \( w_{i_2} \) for some $i_1, i_2 \in [2]$.

It follows from Claims~\ref{cla:noint3} and~\ref{cla:noint4} that the cycle obtained from the union of $\eta_1(e^0 e^2)$, $\eta_1(e^2 e^3_3)$, $\eta_1(e^3_3 e^1_3)$, $\eta_1(e^1_3 e^1_1)$, $\eta_1(e^1_1 e^3_1)$, $\eta_1(e^3_1 e^4)$, $\eta_1(e^4 e^3_2)$, $\eta_1(e^3_2 e^1_2)$, and $\eta_1(e^1_2 e^0)$, by first removing the edges $\eta_1(e^3_1) u^1_1$, $\eta_1(e^3_2) u^1_2$, $\eta_1(e^3_3) u^{i_1}_3$, and $\eta_1(e^4) w_{i_2}$, and then adding $u^1_1 v_1$, $\eta_1(e^2 e^3_1) - \eta_1(e^2)$, $u^1_2 v_2$, $\eta_1(e^2 e^3_2) - \eta_1(e^2)$, $u^{i_1}_3 x_1$, \( \eta_1(e^3_3 e^4)[\eta_1(e^3_3), x_1] \), $w_{i_2} x_2$, and \( \eta_1(e^4 e^3_3)[\eta_1(e^4), x_2] \), is a Hamilton cycle of $G$, which is a contradiction.

\smallskip

Consequently, \( G \) must contain a minor of \( F_4 \).

We apply Lemma~\ref{lem:F4sub} and Proposition~\ref{pro:F4span} to deduce that $G$ contains a spanning JT-subdivision $\eta_2(F_4)$ of $F_4$.

For \( i \in [2] \) and \( j \in \{1, 2, 4\} \), let \( y^i_j \) denote the neighbor of \( \eta_2(f^i) \) in \( \eta_2(f^i f^i_j) \), \( z^i_j \) the neighbor of \( \eta_2(f^i_j) \) in \( \eta_2(f^i f^i_j) \), \( r^i_j \) the neighbor of \( \eta_2(f^i_3) \) in \( \eta_2(f^i_3 f^i_j) \), and \( s^i_j \) the neighbor of \( \eta_2(f^i_j) \) in \( \eta_2(f^i_j f^i_3) \).
For \( i \in [2] \) and \( j \in [4] \), let \( t^i_j \) the neighbor of \( \eta_2(f^i_j) \) in \( \eta_2(f^i_j f^{3-i}_{5-j}) \).

\begin{claim} \label{cla:noint5}
	Any internal vertex of \( \eta_2(f^i_j f^i_3) \) with \( i \in [2] \) and \( j \in \{1, 4\} \) is adjacent to \( z^i_j \) or \( t^i_j \).
\end{claim}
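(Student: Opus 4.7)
My plan is to fix an internal vertex $v$ of $\eta_2(f^i_j f^i_3)$, work by symmetry with $i = j = 1$, and argue by contradiction. Assume $v$ is adjacent to neither $z^1_1$ nor $t^1_1$; the goal is to derive a $K_{3,4}$ minor of $G$. Since Lemma~\ref{lem:JTstable} forces the segment $\eta_2(f^1_1 f^1_3)$ to be an induced path, the two in-segment neighbors of $v$ are its only neighbors on that path, and 4-connectedness of $G$ then produces at least two outside neighbors $u_1, u_2$ of $v$, both distinct from $z^1_1$ and $t^1_1$ by assumption.

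First I would restrict where each $u_l$ can lie. Lemma~\ref{lem:F4+H} rules out $u_l$ being an internal vertex of any segment $\eta_2(e')$ with $e' \in E(F_4)$ independent of $f^1_1 f^1_3$, since that immediately produces a $K_{3,4}$ minor of $G$. At the degree-three branch vertex $f^1_1$, two applications of Lemma~\ref{lem:JTedge}---first with $e_1 = f^1_1 f^1_3$ and $e_2 \in \{f^1 f^1_1,\, f^1_1 f^2_4\}$, then again with the roles of $v$ and $u_l$ swapped---force $u_l = z^1_1$ or $u_l = t^1_1$ whenever $u_l$ is internal to the corresponding adjacent segment, contradicting our assumption. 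Thus $u_l$ cannot be internal to $\eta_2(f^1 f^1_1)$ or $\eta_2(f^1_1 f^2_4)$. The remaining possibilities are that $u_l = \eta_2(w)$ for some $w \in V(F_4) \setminus \{f^1_1, f^1_3\}$, or $u_l$ is an internal vertex of one of the three segments $\eta_2(f^1_2 f^1_3)$, $\eta_2(f^1_3 f^1_4)$, $\eta_2(f^1_3 f^2_2)$ incident to the degree-four branch $f^1_3$.

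For each residual location I would exhibit a $K_{3,4}$ minor of $G$ by contracting suitable subpaths of the spanning subdivision $\eta_2(F_4)$ and using the chord $v u_l$, together with the second chord $v u_{l'}$ when needed. For instance, when $u_l = \eta_2(w)$ with $\{f^1_1, w\}$ in Lemma~\ref{lem:F4+I}'s forbidden list (such as $w = f^2_2$), contracting the $v$-to-$\eta_2(f^1_1)$ portion of the segment yields $F_4 + f^1_1 w$, and Lemma~\ref{lem:F4+I} supplies the $K_{3,4}$ minor; for the remaining $w$'s and for $u_l$ internal to an $f^1_3$-incident segment, I would keep $v$ as an extra vertex in the minor, use the two chords $v u_l$ and $v u_{l'}$, and combine with a contraction along a segment at $f^1_3$ to match the hypothesis of Lemma~\ref{lem:F4+T} or Lemma~\ref{lem:F4+X}.

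The main obstacle is precisely this last case: because $f^1_3$ has degree four in $F_4$, Lemma~\ref{lem:JTedge} no longer constrains chords from $v$ to the segments at $f^1_3$, and most pairs $\{f^1_1, w\}$ are absent from Lemma~\ref{lem:F4+I}'s forbidden list. The anticipated resolution is that pairing both outside-neighbor chords with a contraction at $f^1_3$ always produces either a Lemma~\ref{lem:F4+T}-type configuration (a new vertex adjacent to the subdivision vertex and to two $F_4$-vertices neighboring the endpoints of the subdivided edge) or a Lemma~\ref{lem:F4+X}-type configuration (a new vertex adjacent to four vertices of $F_4$), each yielding the required $K_{3,4}$ minor. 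Verifying this systematically across the three $f^1_3$-incident segments and the relevant choices of $w$ will form the bulk of the remaining case analysis.
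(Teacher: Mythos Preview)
Your setup through Lemma~\ref{lem:F4+H} and Lemma~\ref{lem:JTedge} at $f^1_1$ matches the paper's, but your plan for the ``main obstacle'' does not work, and the paper avoids the obstacle entirely by a different (and much shorter) argument.

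The paper does \emph{not} pair the two chords. Instead it proves that \emph{every} outside neighbour $u$ of $v$ already lies in the tiny set $\{z^1_1,\,t^1_1,\,\eta_2(f^2)\}$, and then observes that with at least two outside neighbours, at most one of which can be $\eta_2(f^2)$, the claim follows. To shrink the possible locations to this set, the paper first rules out the entire bark of each of $f^1_4,\,f^1_2,\,f^2_2,\,f^2_3,\,f^2_1$: a \emph{single} chord from $v$ into any of these barks produces one of the $K_{3,4}$-containing graphs already displayed in Figures~\ref{fig:F4sub} and~\ref{fig:F4+X} (in particular, a chord into the bark of $f^1_2$, $f^1_4$, or $f^2_2$ realises a splitting of the degree-four vertex $f^1_3$, and Lemma~\ref{lem:F4sub} applies). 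This one stroke eliminates all three $f^1_3$-incident segments and five of the eight branch vertices you left open; only $\eta_2(f^1f^1_1)$, $\eta_2(f^1_1f^2_4)$, and $\eta_2(f^2_4f^2)$ survive, and Lemma~\ref{lem:JTedge} together with Lemma~\ref{lem:F4+H} finishes them off.

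Your proposed resolution via Lemma~\ref{lem:F4+T} or Lemma~\ref{lem:F4+X} has an actual gap. Take the instance $u_1=\eta_2(f^1_2)$ and $u_2=\eta_2(f^1_4)$: both are neighbours of $f^1_3$ but neither is a neighbour of $f^1_1$, so no choice of $w_1,w_2$ fits Lemma~\ref{lem:F4+T} for the edge $f^1_1f^1_3$; and Lemma~\ref{lem:F4+X} cannot be invoked with $v$ as the new vertex because removing $v$ destroys the $f^1_1f^1_3$ segment and there is no alternative representative for that edge. The missing ingredient is precisely Lemma~\ref{lem:F4sub}: one chord to $\eta_2(f^1_2)$ already yields a split of $f^1_3$ and hence $K_{3,4}$, with no need to look at the second chord at all.
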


\begin{proof}
	Let \( v \) be an internal vertex of \( \eta_2(f^i_j f^i_3) \). By Lemma~\ref{lem:JTstable} and the fact that \( G \) is 4-connected, we conclude that \( v \) is adjacent to at least two vertices outside \( \eta_2(f^i_j f^i_3) \).
	
	Any neighbor \( u \) of \( v \) cannot belong to the bark of \( f^i_{5-j} \), \( f^i_2 \), \( f^{3-i}_2 \), \( f^{3-i}_3 \), or \( f^{3-i}_j \); otherwise, \( G \) would contain a minor of the first, second, or third graph in Figure~\ref{fig:F4sub}, or the third or fifth graph in Figure~\ref{fig:F4+X}.
	
	Consequently, \( u \) must lie in \( \eta_2(f^i f^i_j) - \eta_2(f^i_j) \), \( \eta_2(f^i_j f^{3-i}_{5-j}) - \eta_2(f^i_j) \), or \( \eta_2(f^{3-i}_{5-j} f^{3-i}) - \eta_2(f^{3-i}_{5-j}) \). By Lemma~\ref{lem:JTedge} and Lemma~\ref{lem:F4+H}, it follows that \( u \) must be \( z^i_j \), \( t^i_j \), or \( \eta_2(f^{3-i}) \).
	
	Thus, \( v \) is adjacent to one of \( z^i_j \) or \( t^i_j \).
\end{proof}

\begin{claim} \label{cla:noint6}
		Any internal vertex of \( \eta_2(f^i_2 f^i_3) \) with \( i \in [2] \) is adjacent to \( z^i_2 \) and \( t^i_2 \).
\end{claim}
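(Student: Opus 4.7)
The plan is to follow the template of Claim~\ref{cla:noint5}. Fix $i=1$ by symmetry and let $v$ be an internal vertex of $\eta_2(f^1_2 f^1_3)$. By Lemma~\ref{lem:JTstable} together with the 4-connectedness of $G$, $v$ has at least two neighbors in $G$ outside $\eta_2(f^1_2 f^1_3)$. Since the target set $\{z^1_2,t^1_2\}$ has only two elements, it suffices to show that every such external neighbor $u$ lies in $\{z^1_2,t^1_2\}$; the existence of two distinct external neighbors then forces them to be precisely $z^1_2$ and $t^1_2$.

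To locate $u$, I proceed as in Claim~\ref{cla:noint5}. First, Lemma~\ref{lem:F4+H} forbids $u$ from being an internal vertex of any segment whose underlying edge of $F_4$ is independent of $f^1_2 f^1_3$. Next, I argue that $u$ cannot lie in the bark of any $w\in V(F_4)\setminus\{f^1_2\}$ except $f^1$ or $f^2_3$. For $w\in\{f^2,f^2_1,f^2_2,f^2_4\}$, an appropriate contraction produces a minor $F_4+f^1_2w$ containing $K_{3,4}$ by Lemma~\ref{lem:F4+I}. For the more delicate barks $w\in\{f^1_3,f^1_1,f^1_4,f^2_2\}$ (interior vertices of segments sharing $f^1_3$ with $\eta_2(f^1_2 f^1_3)$, or endpoints adjacent to $f^1_3$), I invoke the second external neighbor $u'$ of $v$: if $u$ and $u'$ contract to distinct vertices $w,w'\in V(F_4)\setminus\{f^1_2,f^1_3\}$, then $F_4$ together with a new vertex adjacent to $\{f^1_2,f^1_3,w,w'\}$ contains $K_{3,4}$ by Lemma~\ref{lem:F4+X}; and if they contract to the same vertex $w$ (necessarily adjacent to $f^1_3$), Lemma~\ref{lem:F4+T} applied to the path $w_1-f^1_2-f^1_3-w$ in $F_4$, using the subdivided edge $f^1_2 f^1_3$, yields the required $K_{3,4}$, where $w_1\in\{f^1,f^2_3\}$ is supplied by a further contraction through $\eta_2(f^1 f^1_2)$ or $\eta_2(f^1_2 f^2_3)$.

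This leaves $u$ lying in $\eta_2(f^1 f^1_2)-\eta_2(f^1_2)$ or $\eta_2(f^1_2 f^2_3)-\eta_2(f^1_2)$, and Lemma~\ref{lem:JTedge} applied at the degree-3 vertex $f^1_2$ of $F_4$, with incident edges $\{f^1 f^1_2,f^1_2 f^1_3\}$ or $\{f^1_2 f^2_3,f^1_2 f^1_3\}$, then pins $u$ down to be $z^1_2$ or $t^1_2$ respectively. The main obstacle I anticipate is the sub-subcase where both external neighbors of $v$ contract to the same vertex $w$ adjacent to the degree-4 vertex $f^1_3$, since Lemma~\ref{lem:JTedge} cannot be invoked at $f^1_3$; here the required $K_{3,4}$ minor must be produced via Lemma~\ref{lem:F4+T} together with a careful contraction that exploits the extra structure at $f^1_3$, and possibly also the lex-maximality of the JT-subdivision to rule out the most pathological configurations.
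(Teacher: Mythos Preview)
Your overall plan is sound up to the point where you reduce to $u\in\eta_2(f^1 f^1_2)\cup\eta_2(f^1_2 f^2_3)$ and then invoke Lemma~\ref{lem:JTedge} at the cubic vertex $f^1_2$; that part matches the paper. The problem is the ``delicate'' step, where both your invocations of Lemmas~\ref{lem:F4+X} and~\ref{lem:F4+T} are mismatched with the actual minor you produce.

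When $v$ is an internal vertex of $\eta_2(f^1_2 f^1_3)$ and $u,u'$ contract to $w,w'$, the minor you obtain is $F_4$ with the edge $f^1_2 f^1_3$ \emph{subdivided} by a vertex $v^*$ and edges $v^* w$, $v^* w'$ added. This is not ``$F_4$ together with a new vertex adjacent to $\{f^1_2,f^1_3,w,w'\}$'': the edge $f^1_2 f^1_3$ of $F_4$ has been consumed by the subdivision, so Lemma~\ref{lem:F4+X} does not apply. Likewise, Lemma~\ref{lem:F4+T} concerns a \emph{separate} new vertex adjacent to $w_1,w_2$ and to the subdivision vertex $w_3$; in your situation it is $w_3$ itself that carries the extra adjacencies, which is a different graph. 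Your closing paragraph already signals unease here, and rightly so.

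The paper sidesteps all of this. It never uses the second external neighbour to handle the bad barks; instead it observes that for each single neighbour $u$ lying in the bark of one of $f^1_1,f^1_4,f^2,f^2_1,f^2_2,f^2_4$, the graph obtained from $F_4$ by subdividing $f^1_2 f^1_3$ with $v^*$ and adding the single edge from $v^*$ toward that bark already contains one of the graphs in Figure~\ref{fig:F4sub} (graphs two through five) as a minor, hence a $K_{3,4}$ minor. For instance, when $u$ is in the bark of $f^1_1$, deleting the original edge $f^1_1 f^1_3$ from the subdivided graph exhibits a splitting of $f^1_3$. After these six barks are excluded in one stroke, the only remaining locations for $u$ are $\eta_2(f^1 f^1_2)-\eta_2(f^1_2)$ and $\eta_2(f^1_2 f^2_3)-\eta_2(f^1_2)$, and Lemma~\ref{lem:JTedge} finishes. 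The two external neighbours are then used only at the very end, to conclude that \emph{both} $z^1_2$ and $t^1_2$ occur.
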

\begin{proof}
	Let \( v \) be an internal vertex of \( \eta_2(f^i_2 f^i_3) \). By Lemma~\ref{lem:JTstable} and the fact that \( G \) is 4-connected, we conclude that \( v \) is adjacent to at least two vertices outside \( \eta_2(f^i_2 f^i_3) \).
	
	Note that any neighbor \( u \) of \( v \) cannot lie in the bark of \( f^i_1 \), \( f^i_4 \), \( f^{3-i} \), \( f^{3-i}_1 \), \( f^{3-i}_2 \), or \( f^{3-i}_4 \); otherwise, \( G \) would contain a minor of the second, third, fourth, or fifth graph in Figure~\ref{fig:F4sub}.
	
	We conclude that \( u \) must be in \( \eta(f^i f^i_2) - \eta(f^i_2) \) or \( \eta(f^i_2 f^{3-i}_3) - \eta(f^i_2) \). Then, by Lemma~\ref{lem:JTedge}, \( u \) is adjacent to \( \eta(f^i_2) \) within these segments. Hence, we have that \( v \) is adjacent to both \( z^i_2 \) and \( t^i_2 \).
\end{proof}

\begin{claim} \label{cla:intjj}
Any internal vertex of \( \eta_2(f^1 f^1_4) \) is adjacent to at least two of \( y^1_1 \), \( y^1_2 \), \( s^1_4 \), or \( t^1_4 \), and any internal vertex of \( \eta_2(f^2 f^2_1) \) is adjacent to at least two of \( y^2_2 \), \( y^2_4 \), \( s^2_1 \), or \( t^2_1 \).
\end{claim}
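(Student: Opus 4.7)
The plan is first to reduce by symmetry: the $F_4$-automorphism exchanging the two $K_{2,3}$-halves sends $\{f^1, f^1_4\}$ to $\{f^2, f^2_1\}$ and the set $\{y^1_1, y^1_2, s^1_4, t^1_4\}$ to $\{y^2_2, y^2_4, s^2_1, t^2_1\}$, so it suffices to prove the first assertion. Fix an internal vertex $v$ of $\eta_2(f^1 f^1_4)$. By Lemma~\ref{lem:JTstable} this segment is induced in $G$, so $v$ has exactly two neighbors on it, and 4-connectivity of $G$ then yields at least two ``outside'' neighbors of $v$. I aim to show that every outside neighbor of $v$ lies in $\{y^1_1, y^1_2, s^1_4, t^1_4\}$, from which the claim is immediate (since outside neighbors are distinct and number at least two).

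The classification of a single outside neighbor $u$ proceeds by location. If $u$ is an internal vertex of some segment $\eta_2(e)$ with $e$ independent from $f^1 f^1_4$ in $F_4$, Lemma~\ref{lem:F4+H} produces a $K_{3,4}$ minor, contradicting the hypothesis. If instead $e$ is one of $f^1 f^1_1, f^1 f^1_2, f^1_3 f^1_4, f^1_4 f^2_1$ (i.e.\ incident to $f^1$ or $f^1_4$), pivoting Lemma~\ref{lem:JTedge} at the shared degree-three branch vertex places $u$ adjacent to $\eta_2(f^1)$ or $\eta_2(f^1_4)$ on $\eta_2(e)$, so $u \in \{y^1_1, y^1_2, s^1_4, t^1_4\}$. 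For the branch-vertex case $u = \eta_2(w)$ with $w \in V(F_4) \setminus \{f^1, f^1_4\}$: when $w \in \{f^1_1, f^1_2, f^1_3, f^2_1\}$, a reverse application of Lemma~\ref{lem:JTedge} (with the segment incident to $w$ as $e_1$ and $\eta_2(f^1 f^1_4)$ as $e_2$) forces that incident segment to have length one, so $\eta_2(w)$ again coincides with an element of $\{y^1_1, y^1_2, s^1_4, t^1_4\}$; when $w \in \{f^2, f^2_2\}$, contracting one half of $\eta_2(f^1 f^1_4)$ into $\eta_2(f^1)$ or $\eta_2(f^1_4)$ realizes $F_4 + f^1 w$ or $F_4 + f^1_4 w$ as a minor of $G$, and Lemma~\ref{lem:F4+I} then delivers a $K_{3,4}$ minor, a contradiction.

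The residual case $w \in \{f^2_3, f^2_4\}$ is the principal difficulty, because the edge augmentations $\{f^1, f^2_3\}$ and $\{f^1_4, f^2_3\}$ (and similarly for $f^2_4$) do not appear in Lemma~\ref{lem:F4+I}. My plan is to exploit the second outside neighbor of $v$. If $v$ is adjacent to both $\eta_2(f^2_3)$ and $\eta_2(f^2_4)$, contracting the two halves of $\eta_2(f^1 f^1_4)$ into $\eta_2(f^1)$ and $\eta_2(f^1_4)$ produces a minor in which $v$ is a new vertex of $F_4$ joined to $S = \{f^1, f^1_4, f^2_3, f^2_4\}$; this $S$ is equivalent, under the automorphism of $F_4$ swapping $f^1_1 \leftrightarrow f^1_4$ and $f^2_1 \leftrightarrow f^2_4$, to the set $\{f^1, f^1_1, f^2_1, f^2_3\}$ treated by Lemma~\ref{lem:F4+X}, so a $K_{3,4}$ minor results. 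For the mixed scenario in which one outside neighbor is $\eta_2(f^2_3)$ or $\eta_2(f^2_4)$ while another lies in $\{y^1_1, y^1_2, s^1_4, t^1_4\}$, a parallel joint-minor argument---contracting the second outside neighbor's segment so that it coincides with its incident $F_4$-branch vertex---again realizes $v$ as a new vertex joined to four vertices of $F_4$, and Lemma~\ref{lem:F4+X} (or Lemma~\ref{lem:F4+T} in configurations where the four vertices fail its hypotheses) once more gives $K_{3,4}$. The delicate bookkeeping of these joint-minor reductions, verifying that the resulting 4-vertex sets $S$ each match (up to $F_4$-automorphism) a case of Lemma~\ref{lem:F4+X}, is the step I expect to be hardest.
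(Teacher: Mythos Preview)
Your setup and the treatment of internal vertices of segments (via Lemma~\ref{lem:F4+H}) and of the four ``close'' segments (via Lemma~\ref{lem:JTedge}) are fine and match the paper. The reverse use of Lemma~\ref{lem:JTedge} for the branch vertices $w\in\{f^1_1,f^1_2,f^1_3,f^2_1\}$ is also correct. The problem is in your residual case.

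First, a small point: $w=f^2_3$ is not actually residual. The half-swap automorphism of $F_4$ sends $\{f^1,f^2_3\}$ to $\{f^2,f^1_2\}$ and $\{f^1_4,f^2_3\}$ to $\{f^2_1,f^1_2\}$, both of which are listed in Lemma~\ref{lem:F4+I}. So contracting one half of $\eta_2(f^1f^1_4)$ and invoking Lemma~\ref{lem:F4+I} already disposes of a single adjacency to $\eta_2(f^2_3)$.

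The real gap is your appeal to Lemma~\ref{lem:F4+X}. When you ``contract the two halves of $\eta_2(f^1f^1_4)$ into $\eta_2(f^1)$ and $\eta_2(f^1_4)$'', the vertex $v$ sits \emph{between} these two branch sets, so they are no longer adjacent: the edge $f^1f^1_4$ of $F_4$ is not realized. What you obtain is $(F_4 - f^1f^1_4)$ together with a new vertex joined to $S$, not $F_4$ together with such a vertex. Lemma~\ref{lem:F4+X} concerns the latter and therefore does not apply. The same defect infects every ``joint-minor'' case in your mixed scenario, and neither Lemma~\ref{lem:F4+T} (which also keeps $F_4$ intact) nor the bookkeeping you anticipate will repair it.

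The paper avoids this entirely. It shows that a \emph{single} outside neighbor $u$ in the bark of any of $f^2,f^2_2,f^2_3,f^2_4$ already forces a $K_{3,4}$ minor, by pointing to specific configurations (the third and sixth graphs of Figure~\ref{fig:F4sub} and the first and second graphs of Figure~\ref{fig:F4+X}). These figures are precisely of the form ``$F_4$ with one edge subdivided and the subdivision vertex joined to a third vertex'', so they \emph{are} realized by your $v$; their $K_{3,4}$ minors were verified when Figure~\ref{fig:F4+X} was introduced. With all four barks excluded, every outside neighbor lies on one of the four incident segments, and Lemma~\ref{lem:JTedge} finishes the proof without ever needing two neighbors simultaneously.
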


\begin{proof}
	It suffices to prove the first statement. 
	
	Let \( v \) be an internal vertex of \( \eta_2(f^1 f^1_4) \). By Lemma~\ref{lem:JTstable} and the fact that \( G \) is 4-connected, we conclude that \( v \) is adjacent to at least two vertices outside \( \eta_2(f^1 f^1_4) \).
	
	Let $u$ be a neighbor of $v$ outside \( \eta_2(f^1 f^1_4) \). Note that \( u \) cannot belong to the bark of \( f^2 \), \( f^2_2 \), \( f^2_3 \), or \( f^2_4 \); otherwise, \( G \) would contain a minor of the third or sixth graph in Figure~\ref{fig:F4sub} or the first or second graph in Figure~\ref{fig:F4+X}. Additionally, \( u \) cannot be an internal vertex of \( \eta_2(f^1_1 f^1_3) \) or \( \eta_2(f^1_2 f^1_3) \), by Lemma~\ref{lem:F4+H}.
	
	Consequently, \( u \) must lie in \( \eta_2(f^1 f^1_1) - \eta_2(f^1) \), \( \eta_2(f^1 f^1_2) - \eta_2(f^1) \), \( \eta_2(f^1_4 f^1_3) - \eta_2(f^1_4) \), or \( \eta_2(f^1_4 f^2_1) - \eta_2(f^1_4) \). Applying Lemma~\ref{lem:JTedge}, the claim follows.
\end{proof}

We are ready to construct a Hamilton cycle in \( G \).

Claim~\ref{cla:noint5} asserts that \( r^1_1 \) is adjacent to \( a^1_1 \in \{z^1_1, t^1_1\} \), and \( r^2_4 \) is adjacent to \( a^2_4 \in \{z^2_4, t^2_4\} \).

Claim~\ref{cla:noint6} asserts that \( r^1_2 t^1_2, r^2_2 t^2_2 \in E(G) \).

Claim~\ref{cla:intjj} implies that the path \( \eta_2(f^1 f^1_4) \) contains two adjacent vertices \( x^1, x^1_4 \) such that \( x^1 \) does not lie in \( \eta_2(f^1 f^1_4)[x^1_4, \eta_2(f^1_4)] \), \( x^1 \) is adjacent to \( b^1 \in \{y^1_1, y^1_2\} \), and \( x^1_4 \) is adjacent to \( b^1_4 \in \{s^1_4, t^1_4\} \). Similarly, the path \( \eta_2(f^2 f^2_1) \) contains two adjacent vertices \( x^2, x^2_1 \) such that \( x^2 \) does not lie in \( \eta_2(f^2 f^2_1)[x^2_1, \eta_2(f^2_1)] \), \( x^2 \) is adjacent to \( b^2 \in \{y^2_2, y^2_4\} \), and \( x^2_1 \) is adjacent to \( b^2_1 \in \{s^2_1, t^2_1\} \).

In what follows, we consider replacements of edges by paths, ignoring any replacement in which the replacing path consists solely of the edge itself; for instance, the replacement of the edge \( \eta_2(f^1_1) a^1_1 \) with the path consisting of \( \eta_2(f^1_1 f^1_3) - \eta_2(f^1_3) \) and \( r^1_1 a^1_1 \) is performed only when \( r^1_1 \neq \eta_2(f^1_1) \).

We claim that the subgraph of \( G \) obtained from the union of \( \eta_2(f^1 f^1_1) \), \( \eta_2(f^1_1 f^2_4) \), \( \eta_2(f^2_4 f^2) \), \( \eta_2(f^2 f^2_2) \), \( \eta_2(f^2_2 f^1_3) \), \( \eta_2(f^1_3 f^1_4) \), \( \eta_2(f^1_4 f^2_1) \), \( \eta_2(f^2_1 f^2_3) \), \( \eta_2(f^2_3 f^1_2) \), and \( \eta_2(f^1_2 f^1) \), becomes a Hamilton cycle of \( G \) after performing the following replacements one after another: replace the edge \( \eta_2(f^1_1) a^1_1 \) with the path consisting of \( \eta_2(f^1_1 f^1_3) - \eta_2(f^1_3) \) and \( r^1_1 a^1_1 \); replace the edge \( \eta_2(f^2_4) a^2_4 \) with the path consisting of \( \eta_2(f^2_4 f^2_3) - \eta_2(f^2_3) \) and \( r^2_4 a^2_4 \); replace the edge \( \eta_2(f^1_2) t^1_2 \) with the path consisting of \( \eta_2(f^1_2 f^1_3) - \eta_2(f^1_3) \) and \( r^1_2 t^1_2 \); replace the edge \( \eta_2(f^2_2) t^2_2 \) with the path consisting of \( \eta_2(f^2_2 f^2_3) - \eta_2(f^2_3) \) and \( r^2_2 t^2_2 \); replace the edge \( \eta_2(f^1) b^1 \) with the path consisting of \( \eta_2(f^1 f^1_4)[\eta_2(f^1), x^1] \) and \( x^1 b^1 \); replace the edge \( \eta_2(f^1_4) b^1_4 \) with the path consisting of \( \eta_2(f^1 f^1_4)[x^1_4, \eta_2(f^1_4)] \) and \( x^1_4 b^1_4 \); replace the edge \( \eta_2(f^2) b^2 \) with the path consisting of \( \eta_2(f^2 f^2_1)[\eta_2(f^2), x^2] \) and \( x^2 b^2 \); and finally, replace the edge \( \eta_2(f^2_1) b^2_1 \) with the path consisting of \( \eta_2(f^2 f^2_1)[x^2_1, \eta_2(f^2_1)] \) and \( x^2_1 b^2_1 \).

To prove this claim, it suffices to verify that the edge being replaced is still present in the subgraph at the moment when the replacement is performed. We illustrate this by considering two representative cases; the remaining ones are similar and are omitted.

Suppose the edge \( \eta_2(f^2_4) a^2_4 \) is missing when we attempt to replace it. This could only happen if it was already removed during the earlier replacement of \( \eta_2(f^1_1) a^1_1 \). In that case, we must have \( a^1_1 = \eta_2(f^2_4) \neq r^2_4 \) and \( a^2_4 = \eta_2(f^1_1) \neq r^1_1 \), which implies that \( G \) contains the graph shown in Figure~\ref{fig:F4L} and hence \( K_{3,4} \) as a minor---a contradiction.

\begin{figure}[!ht]
	\centering{%
		\includegraphics[scale=1]{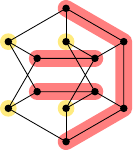}
	}
	\caption{A graph that contains \( K_{3,4} \) as a minor.}
	\label{fig:F4L}
\end{figure}

Similarly, if the edge \( \eta_2(f^1) b^1 \) is missing at the time we attempt to replace it, then it must be that \( a^1_1 = \eta_2(f^1) \neq x^1 \) and \( b^1 = \eta_2(f^1_1) \neq r^1_1 \), implying that \( G \) contains the second graph in Figure~\ref{fig:F4sub}, again leading to a contradiction.

We have thus shown that \( G \) is hamiltonian, leading to a contradiction. This completes the proof that Ding and Marshall's question has an affirmative answer---every 4-connected non-hamiltonian graph contains a \( K_{3,4} \) minor.

\section*{Acknowledgements}

The author wishes to thank Guoli Ding, Jorik Jooken, and Kenta Ozeki for helpful discussions.

\bibliographystyle{abbrv}
\bibliography{paper}

\appendix
\section{The minor-minimal 3-connected non-projective-planar graphs} \label{sec:A3}

\begin{table}[H]
	\centering
	\begin{tabular}{cccccc}
		\begin{minipage}{0.14\textwidth}
			\centering
			\includegraphics[scale=0.9]{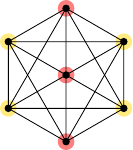} \\
			\caption*{$A_2$}
		\end{minipage} &
		\begin{minipage}{0.14\textwidth}
			\centering
			\includegraphics[scale=0.9]{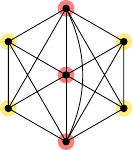} \\
			\caption*{$B_1$}
		\end{minipage} &
		\begin{minipage}{0.14\textwidth}
			\centering
			\includegraphics[scale=0.9]{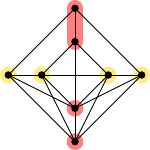} \\
			\caption*{$B_7$}
		\end{minipage} &
		\begin{minipage}{0.14\textwidth}
			\centering
			\includegraphics[scale=0.9]{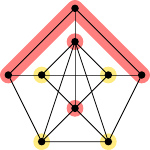} \\
			\caption*{$C_3$}
		\end{minipage} &
		\begin{minipage}{0.14\textwidth}
			\centering
			\includegraphics[scale=0.9]{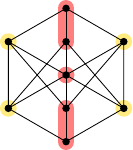} \\
			\caption*{$C_4$}
		\end{minipage} &
		\begin{minipage}{0.14\textwidth}
			\centering
			\includegraphics[scale=0.9]{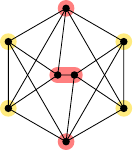} \\
			\caption*{$C_7$}
		\end{minipage}
	\end{tabular}
\end{table}

\begin{table}[H]
	\centering
	\begin{tabular}{cccccc}
		\begin{minipage}{0.14\textwidth}
			\centering
			\includegraphics[scale=0.9]{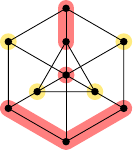} \\
			\caption*{$D_2$}
		\end{minipage} &
		\begin{minipage}{0.14\textwidth}
			\centering
			\includegraphics[scale=0.9]{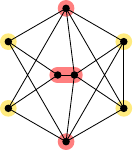} \\
			\caption*{$D_3$}
		\end{minipage} &
		\begin{minipage}{0.14\textwidth}
			\centering
			\includegraphics[scale=0.9]{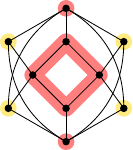} \\
			\caption*{$D_9$}
		\end{minipage} &
		\begin{minipage}{0.14\textwidth}
			\centering
			\includegraphics[scale=0.9]{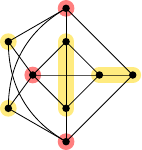} \\
			\caption*{$D_{12}$}
		\end{minipage} &
		\begin{minipage}{0.14\textwidth}
			\centering
			\includegraphics[scale=0.9]{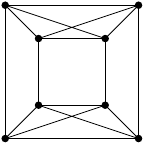} \\
			\caption*{$D_{17}$}
		\end{minipage} &
		\begin{minipage}{0.14\textwidth}
			\centering
			\includegraphics[scale=0.9]{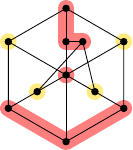} \\
			\caption*{$E_2$}
		\end{minipage}
	\end{tabular}
\end{table}

\begin{table}[H]
	\centering
	\begin{tabular}{cccccc}
		\begin{minipage}{0.14\textwidth}
			\centering
			\includegraphics[scale=0.9]{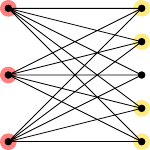} \\
			\caption*{$E_3$}
		\end{minipage} &
		\begin{minipage}{0.14\textwidth}
			\centering
			\includegraphics[scale=0.9]{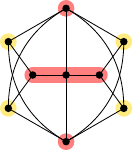} \\
			\caption*{$E_5$}
		\end{minipage} &
		\begin{minipage}{0.14\textwidth}
			\centering
			\includegraphics[scale=0.9]{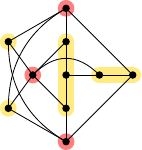} \\
			\caption*{$E_{11}$}
		\end{minipage} &
		\begin{minipage}{0.14\textwidth}
			\centering
			\includegraphics[scale=0.9]{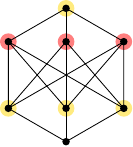} \\
			\caption*{$E_{18}$}
		\end{minipage} &
		\begin{minipage}{0.14\textwidth}
			\centering
			\includegraphics[scale=0.9]{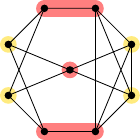} \\
			\caption*{$E_{19}$}
		\end{minipage} &
		\begin{minipage}{0.14\textwidth}
			\centering
			\includegraphics[scale=0.9]{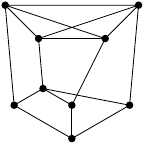} \\
			\caption*{$E_{20}$}
		\end{minipage}
	\end{tabular}
\end{table}

\begin{table}[H]
	\centering
	\begin{tabular}{cccccc}
		\begin{minipage}{0.14\textwidth}
			\centering
			\includegraphics[scale=0.9]{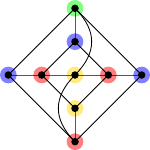} \\
			\caption*{$E_{22}$}
		\end{minipage} &
		\begin{minipage}{0.14\textwidth}
			\centering
			\includegraphics[scale=0.9]{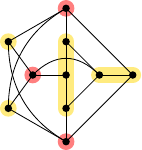} \\
			\caption*{$E_{27}$}
		\end{minipage} &
		\begin{minipage}{0.14\textwidth}
			\centering
			\includegraphics[scale=0.9]{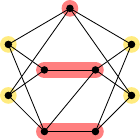} \\
			\caption*{$F_1$}
		\end{minipage} &
		\begin{minipage}{0.14\textwidth}
			\centering
			\includegraphics[scale=0.9]{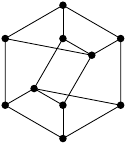} \\
			\caption*{$F_4$}
		\end{minipage} &
		\begin{minipage}{0.14\textwidth}
			\centering
			\includegraphics[scale=0.9]{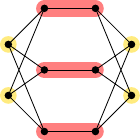} \\
			\caption*{$G_1$}
		\end{minipage} 
	\end{tabular}
\end{table}

\end{document}